\documentclass[a4paper,11pt, reqno]{amsart}
%%%% Standard Packages
\usepackage[margin=3cm]{geometry}
\usepackage{hyperref}
\usepackage{amsmath,amssymb,amsfonts}%

\usepackage{enumitem}
\usepackage[all]{xy}
\usepackage{array}
\usepackage{tabularx}

% %%%%

\usepackage{multirow}

\def\R{\mathbb{R}}

\def\b{\mathfrak{b}}

\def\Z{\mathbb{Z}}

\def\A{\mathcal{A}}

\def\L{\mathcal{L}}

\def\K{\mathcal{E}}
\def\Q{\mathcal{K}}

\def\H{\mathcal{H}}

\def\N{\mathbb{N}}

\def\x{\bold{x}}
\def\y{\bold{y}}
\def\z{\bold{z}}
\def\a{\bold{a}}
\def\b{\bold{b}}

\def\e{\bold{e}}

\def\w{\bold{w}}

\def\t{\bold{t}}
\def\0{\bold{0}}
\def\1{\bold{1}}
\def\·{\bullet}
\def\<{\langle}
\def\>{\rangle}
\newcommand{\dsum}{\displaystyle\sum}
\newcommand{\dprod}{\displaystyle\prod}

\usepackage{adjustbox}
\usepackage{tikz}
\usepackage{pgfplots}
\pgfplotsset{compat=newest}
\usetikzlibrary{decorations.markings}

\usepackage{enumitem}
\usepackage{multicol}
%\newtheorem{theorem}{Theorem}
% \newtheorem{definition}[theorem]{Definition}
% \newtheorem{cor}[theorem]{Corollary}
% \newtheorem{lemma}[theorem]{Lemma}
% \newtheorem{proposition}[theorem]{Proposition}
% %\newtheorem{example}[theorem]{Example}
% \newtheorem{rmk}[theorem]{Remark}
% \newtheorem{note}[theorem]{Note}

\usepackage{float}

\def\Ext{{\rm Ext}}
\def\ba{{\boldsymbol{\alpha}}}
\usepackage{schemata}%
%\raggedbottom
\newtheorem{theorem}{Theorem}
\newtheorem{remark}[theorem]{Remark}
% \newsiamremark{remark}{Remark}
% \newsiamremark{hypothesis}{Hypothesis}
% \crefname{hypothesis}{Hypothesis}{Hypotheses}
\newtheorem{proposition}[theorem]{Proposition}
\newtheorem{definition}[theorem]{Definition}
\newtheorem{lemma}[theorem]{Lemma}
\newtheorem{cor}[theorem]{Corollary}
\newtheorem{example}[theorem]{Example}

% Authors: full names plus addresses.
\allowdisplaybreaks

\let\origmaketitle\maketitle
\def\maketitle{
  \begingroup
  \def\uppercasenonmath##1{} % this disables uppercasing title
  \let\MakeUppercase\relax % this disables uppercasing authors
  \origmaketitle
  \endgroup
}

\begin{document}
\title[]{\huge On minimal extended representations of generalized power cones}

\author[V. Blanco \MakeLowercase{and} M. Mart\'inez-Ant\'on]{{\Large V\'ictor Blanco$^{\dagger,\ddagger}$ and Miguel Mart\'inez-Ant\'on$^{\dagger,\ddagger}$}\bigskip\\
{\large $^\dagger$Institute of Mathematics (IMAG)\\
$^\ddagger$Dpt. Quant. Methods for Economics \& Business\medskip\\
Universidad de Granada\bigskip\\
\texttt{vblanco@ugr.es}, \texttt{mmanton@ugr.es}}}

\date{\today}

\maketitle

\begin{abstract}
In this paper, we analyze minimal representations of $(p,\ba)$-power cones as simpler cones. We derive some new results on the complexity of the representations, and we provide a procedure to construct a minimal representation by means of second order cones in case $\ba$ and $p$ are rational. The construction is based on the identification of the cones with a graph, the mediated graph. Then, we develop a mixed integer linear optimization formulation to obtain the optimal mediated graph, and then, the minimal representation. We present the results of a series of computational experiments in order to analyze the computational performance of the approach, both to obtain the representation and its incorporation into a practical conic optimization model that arises in facility location.
\end{abstract}

\keywords{
second order cones, $p$-order cones, power cones, weighted geometric optimization, extended representations, conic programming, mediated graphs, combinatorial optimization.}

\subjclass[2010]{90C23, 90C25, 91B32, 90C10, 90C27, 90C35}

\section{Introduction}

The use of conic structures in mathematical optimization problems has been widely extended within the last few years. The development of interior point methods with specialized barriers has given room to efficiently solve problems involving cones with high practical interest. The incorporation of exact techniques to solve conic optimization problems in different off-the-shelf optimization solvers has allowed practitioners to use these tools to make decisions in different fields. This is the case of facility location~\cite{blanco2019ordered,blanco2022fairness,blanco2014revisiting,xue2000efficient}, supervised classification~\cite{blanco2020optimal,blanco2020lp}, network covering~\cite{blanco2024optimal}, power flow~\cite{chiang2007power}, radiotherapy~\cite{shirvani2015p}, portfolio selection~\cite{krokhmal_risk_2010}, geometric programming~\cite{avriel_extension_1971,nestorenko_optimization_2022,ota_geometric_2019}, among many others. Thus, a lot of theoretical results have emerged to understand the geometry of different representations of cones~\cite{hien_differential_2015,kian_minimal_2019,lu_decompositions_2020,roy_self-concordant_2022}. In terms of efficiency of optimization problems involving cones, different polyhedral approximations have been proposed~\cite{ben-tal_lectures_2001} in order to take advantage of linear programming solvers to compute approximate solutions to conic optimization problems. Nevertheless, in \cite{glineur2000computational} the author showed that interior points approaches have a better performance than the linear programming approximations proposed in \cite{ben-tal_lectures_2001}. In \cite{vielma2008lifted}, linear approximations are recommended for conic programs with integer variables.

In practice, among the vast variety of cones, some of them have been extensively analyzed since most of the cones can be \emph{rewritten} in terms of these basic cones ~\cite{lu_decompositions_2020}. These cones are the power cone and the exponential cone, whose efficient representation and incorporation also allow the incorporation of many other cones such as the second order cone, the $p$-order cone, geometric cones, the SDP cone, etc. As already stated by one of the most popular conic optimization solvers, Mosek~\cite{mosek}, most of the cones are actually represented based on these two types of cones. 

In this paper, we analyze a general type of power cone, the $(p,\ba)$-power cone, which is defined as:
$$
    \Q_p^{d_1 + d_2}(\ba) = 
    \{(\x,\z) \in \R^{d_1} \times \R^{d_2}_+ : \|\x\|_p\leq \z^{\ba}\}.
$$
for $p \in \R_+ \cup \{\infty\}$ ($p\geq 1$), $d_1, d_2 \in \Z_+$ ($d_2 \geq 1$), and $\ba \in  \{(\alpha_1, \ldots, \alpha_{d_2}) \in \R^+: \sum_{j=1}^{d_2} \alpha_j =1\}$. This cone has in particular cases the $p$-order cone and the power cone, and its efficient representation would allow us to solve optimization problems that naturally appear in different fields.

An extended representation of a cone (in general, of any set) is an equivalent reformulation of the set through its projection onto a higher dimensional space and its identification with a product of simpler sets. In general, different representations are valid for the same set. Thus, a natural measure of the goodness of a representation is the complexity of the representation, that is, the dimension of the space where the set is embedded and the number of simpler factor sets in the representation. In this paper, we derive different extended representations of the $(p,\ba)$-power cone by means of simpler power cones and study its complexity. In case $p$ and $\ba$ are rational, we provide extended representations of the cone as $3$-dimensional rotated quadratic cones. The main advantage of these simpler cones is that they are efficiently handled by most off-the-shelf optimization solvers such as Gurobi, CPLEX, or FICO. We deal with the problem of constructing optimal representations by solving a mathematical optimization problem. 

The analysis of representations for particular cases of these cones has been already addressed by different authors. The case $\ba=\left(\frac{s}{r},\frac{r-s}{r}\right)$ was analyzed in \cite{krokhmal_risk_2010,blanco2014revisiting} where the authors derived methods to represent the cone in a higher dimensional space with $O(\log_2(r))$ extra variables.  In \cite{morenko2013p}, an algorithm is proposed to represent the cones with the particular choice $\ba=\left(\frac{r_1}{2^m},\frac{r_2}{2^m}, \frac{r_3}{2^m}\right)$ with complexity $O(m)$. The general case of this cone, i.e., when  $\ba=\left(\frac{r_1}{2^m}, \ldots, \frac{r_d}{2^m}\right)$ where $m\in \N$ was studied in \cite{kian_minimal_2019} with a similar strategy. As far as we know, the general case, $\ba=\left(\frac{s_1}{\hat{s}}, \ldots, \frac{s_d}{\hat{s}}\right)$, has been only previously analyzed in \cite{wang2022weighted} where the authors propose a brute force enumeration approach as well as a heuristic algorithm for the computation of a minimal representation of the cone by second order cones.  The previous approaches analyzing cones and the obtained complexities are summarized in Table \ref{table:soa}.

\begin{table}[h]
\centering
\begin{tabular}{>{\centering\arraybackslash}m{2cm} >{\centering\arraybackslash}m{2.9cm} >{\centering\arraybackslash}m{3.5cm} >{\centering\arraybackslash}m{4cm}}
\hline
\textbf{Ref.} & \textbf{Cone} & \textbf{Methodology} & \textbf{Complexity}\\
\hline\hline
\cite{krokhmal_risk_2010} & $\Q_1^{1+2}\left(\frac{s}{r},\frac{r-s}{r}\right)$ & Tower of variables & $O(\log_2(r))$\\
\hline
\cite{blanco2014revisiting} & $\Q_1^{1+2}\left(\frac{s}{r},\frac{r-s}{r}\right)$ & Binary decomposition & $O(\log_2(r))$\\
\hline
\cite{morenko2013p} & $\Q_1^{1+3}\left(\frac{r_1}{2^m},\frac{r_2}{2^m}, \frac{r_3}{2^m}\right)$ & A successive algorithm & $O(m)$\\
\hline
\cite{kian_minimal_2019} & $\Q_1^{1+n}\left(\frac{r_1}{2^m},\ldots, \frac{r_d}{2^m}\right)$ & IP model and heuristic & $O(m)$\\
\hline
\cite{bental2001} & $\Q_1^{1+n}\left(\frac{s_1}{\hat{s}},\ldots, \frac{s_d}{\hat{s}}\right)$ & 
Tower of variables & $O(\hat{s})$\\
\hline
\cite{wang2022weighted} & $\Q_1^{1+n}\left(\frac{s_1}{\hat{s}},\ldots, \frac{s_d}{\hat{s}}\right)$ & A brute force and  heuristics & $O(\max\{\log_2(\hat{s}),d-1\})$\\
\hline
This paper & $\Q_1^{1+n}\left(\frac{s_1}{\hat{s}},\ldots, \frac{s_d}{\hat{s}}\right)$ & Math-optimization based approach& $O(\max\{\log_2(\hat{s}),d-1\})$\\
\hline
\end{tabular}
\caption{Summary of state-of-the-art. Complexity is identified with the number of cones  $\Q_1^{1+2}\left(\frac{1}{2},\frac{1}{2}\right)$ involved in the representation.}\label{table:soa}
\end{table}

Apart from the particular simplifications, this type of generalized power cone appears, for instance,  when modeling attraction between different bodies. According to the Universal Law of Gravitation by Newton, the attraction of two objects can be obtained as:
$$
F  = \frac{Gm_1m_2}{\|\x_1-\x_2\|^2_2}
$$
where $G$ is the gravity constant, $m_1$ and $m_2$ are the masses of the two bodies, and $\x_1$ and $\x_2$ their positions in the space. Thus, given a minimum threshold for the attraction between the two objects, $T$, the inequality modeling all the masses and positions with such a minimum attraction can be written as $\frac{Gm_1m_2}{\|\x_1-\x_2\|^2_2} \geq T$, or equivalently:
$$
\|\x\|^2_2 \leq \frac{G}{T} m_1 m_2, \x \in \R^{3}, m_1, m_2 \in \R_+,
$$
where $\x = \x_1 - \x_2$. Thus, the region defined by the inequality above is exactly one of the cones in our family of generalized power cones, specifically $\Q_2^{3+2}\left(\frac{1}{2},\frac{1}{2}\right)$. Clearly, the use of different distances between the two bodies in higher dimensional spaces ($\R^{d_1})$ and generalizing the product of two attributes (as the masses) to more than two, $d_2$, and aggregating them by using the weighted geometric mean of them, is a reasonable measure of attraction, whose representation results in the shape of the generalized power cones that we analyze here.

In this paper, we analyze this general cone, and apart from providing constructive approaches to simply represent the cones, we derive a novel approach to construct minimal representations of the cone as simpler second order cones. The main structures that we use for this construction are the so-called \textit{mediated graphs}. These graphs are related to the concept of mediated sets that were introduced in \cite{reznick1989forms}. The connections between these sets and the power cone were suggested in  \cite{wang2022weighted}. In this paper, these structures are also applied to analyze the Sum of Non-negative Circuits (SONC) problem~\cite{wang2022nonnegative} or the Sum of Squares (SOS) problem~\cite{iliman2016lower}. Here, we propose, for the first time, an exact mathematical optimization-based methodology to derive minimal representations of the power cone through the problem that we call the Minimum Cardinality Mediated Graph Problem (MCMGP). 

The rest of the paper is organized as follows. In section \ref{sec:2a} we set the notation and the definition of the main concepts along the paper. Section \ref{sec:3} is devoted to seeing possible extensions of the general $(p,\ba)$-power cone by means of simpler $\boldsymbol{\beta}$-power cones and analyzing the complexity between them. In section \ref{sec:4} we will show the optimal extended representation of $\ba$-power cone using second order cones, for this purpose we will introduce the idea of $\ba$-mediated graph, its relationship with these extended representations and address the problem of finding the $\ba$-mediated graph of minimal cardinality (MCMGP) via combinatorial programming. Section \ref{sec:5} is due to show the computational performance of our mathematical programming model of MCMGP on the one hand; and on the other hand, to introduce a covering problem based on gravity whose feasible region is a generalized power cone and how using an optimal extended representation affects to the computational results. Finally, in Section \ref{sec:6} we draw some conclusions and future research lines on the topic.

\section{Preliminaries}\label{sec:2a}

The main goal of this paper is to analyze the representations of $(p,\ba)$-power cones as a product of simpler cones in higher dimensional spaces. The formal definition of this representation is stated as follows. 

\begin{definition}[Extended representation]
    Given a set $Q \subset \R^d$, we say that a finite family $\K=\{Q_l\}_{l=1}^{\L_\K}$ with $Q_l \subset \R^{d_l}$, for $l=1, \ldots, \L_\K$ is an \textbf{extended representation} of $Q$ if there exist $m_\K \in \Z_+$  and projections $\pi_l: \R^d \times \R^{m_\K} \rightarrow \R^{d_{l}}$, for $l=1, \ldots, \L_\K$,  such that:
    $$
    \x \in Q \text{ if, and only if, }\exists \w \in \R^{m_\K} \mbox{such that } \pi_l(\x,\w) \in Q_l \text{ } \forall l=1, \ldots, \L_\K.
    $$
    
    We denote by ${\Ext}(Q)$ the set of families which are extended representations of $Q$.  
\end{definition}
    For any extended representation $\K$ of the set $Q$, the integer $m_\K$ in the above definition will be denoted as the  \textbf{extended dimension} of $\K$ to represent $Q$ and the integer  $\L_\K$ will be called the \textbf{cardinality of the representation}.

Note that this definition differs slightly from the notion of \emph{extended formulation} of a polytope that has been widely studied in combinatorial optimization~(see e.g. \cite{kaibel2015simple}). The key idea of our notion of extended representation is to lift the original set $Q$ onto an extended higher dimensional space, keeping the components of the original set $Q$, but allowing a simpler and more suitable representation at the price of adding some new variables and constraints. A clear illustration of this type of extension is the one commonly used to reformulate disjunctive linear programming problems. Let $Q={\rm conv}(\bigcup_{i=1}^n P_i)$ the convex hull of the union of $n$ polytopes $P_1, \ldots, P_n \subset \R^d$, each of them defined by means of linear inequalities, $P_i=\{\z \in \R^d: A_i \z\leq \b_i\}$ with $A_i \in \R^{m_i\times d}$, $\b_i\in \R^{m_i}$ for $i=1, \ldots, n$. While $Q$ is difficult to be represented as a set of inequalities in $\R^d$, it can be represented by the set:
\begin{align*}
Q^\prime = \{&(\z, \z_1, \ldots, \z_n, \lambda_1, \ldots, \lambda_n) \in \R^d \times \R^{d n} \times \R^n:\\
& \z - \sum_{i=1}^n \z_i =0,\\
& A_i \z_i - \b_i \lambda_i \leq 0,\forall i=1, \ldots, n, \\
&\sum_{i=1}^n \lambda_i =1, \\
& \lambda_i \geq 0, \forall i=1, \ldots, n\}.
\end{align*}
Defining the projection $\pi$ as the identity in $\R^d \times \R^{d n} \times \R^n$, it is known (see e.g., \cite{balas1998projection}) that $\z\in Q$ if and only if there exists $\z_1, \ldots, \z_n \in \R^d, \lambda_1, \ldots, \lambda_n \in \R$ such that $\pi(\z,\z_1, \ldots, \z_n,\lambda_1, \ldots, \lambda_n) \in Q^\prime$. Thus $\K=\{Q^\prime\}$ is an extended representation of $Q$ with $\L_\K=1$ and $m_\K=dn+n$.

An extended representation can be seen as a generalized intersection of the sets that are part of the extension, as stated in the following result whose proof is straightforward from the definition.
    \begin{proposition}
        Let $Q \subset \R^d$ and $\K=\{Q_l\}_{l=1}^{\L_\K}$ with $Q_l \subset \R^{d_l}$, for $l=1, \ldots, \L_\K$. Then, $\K \in \Ext(Q)$ if and only if there exist $m_\K \in \Z_+$  and projections $\pi_l: \R^d \times \R^{m_\K} \rightarrow \R^{d_{l}}$, for $l=1, \ldots, \L_\K$,  such that:
    $$
    Q={\rm proj}_d\left(\boldsymbol{\pi}^{-1}\left(\dprod_{l=1}^{\L_\K}Q_l \right)\right)={\rm proj}_d\left(\bigcap_{l=1}^{\L_\K}\left(\pi_l^{-1}(Q_l) \right)\right)\cong {\rm proj}_d\left(\bigcap_{l=1}^{\L_\K}\left(Q_l\times \R^{m_\K-d_l} \right)\!\!\right),
    $$
    where ${\rm proj}_d: \R^d\times \R^{m_\K}\to \R^d$ is the canonical projection of $\R^d$ and $\boldsymbol{\pi}=(\pi_1\times\cdots\times \pi_{\L_\K}).$ Here, $\cong$ stands for the bijection equivalence relation, i.e., $A \cong B$ if $A$ and $B$ are bijective sets.
    \end{proposition}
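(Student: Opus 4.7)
The plan is to unpack the definition of extended representation and rewrite the existential condition in the definition as a projection. By definition, $\K \in \Ext(Q)$ means there exist $m_\K \in \Z_+$ and projections $\pi_l$ such that $\x \in Q$ if and only if there exists $\w \in \R^{m_\K}$ with $\pi_l(\x,\w) \in Q_l$ for all $l$. The first observation is that the conjunction ``$\pi_l(\x,\w) \in Q_l$ for every $l$'' is equivalent to the single statement $\boldsymbol{\pi}(\x,\w) \in \dprod_{l=1}^{\L_\K} Q_l$, i.e., $(\x,\w)\in \boldsymbol{\pi}^{-1}\bigl(\dprod_{l=1}^{\L_\K}Q_l\bigr)$.

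Next, I would invoke the standard identity for preimages of products: for any maps $\pi_l$ with common domain,
$$
\boldsymbol{\pi}^{-1}\Bigl(\dprod_{l=1}^{\L_\K} Q_l\Bigr) \;=\; \bigcap_{l=1}^{\L_\K}\pi_l^{-1}(Q_l),
$$
which justifies the first equality. The existential quantifier ``$\exists\, \w\in\R^{m_\K}$ such that $(\x,\w)\in S$'' for $S\subset \R^d\times\R^{m_\K}$ is, by definition of the canonical projection, the same as ``$\x\in \mathrm{proj}_d(S)$''. Applying this to $S=\bigcap_l\pi_l^{-1}(Q_l)$ therefore yields $Q=\mathrm{proj}_d\bigl(\bigcap_l\pi_l^{-1}(Q_l)\bigr)$, giving both the forward and backward implications simultaneously, since every step is an equivalence.

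For the bijection $\cong$ on the right, I would use that each $\pi_l:\R^d\times\R^{m_\K}\to \R^{d_l}$ is a coordinate projection, so $\pi_l^{-1}(Q_l)$ is (up to a reordering of coordinates) $Q_l$ times a Euclidean space over the coordinates that $\pi_l$ discards. Identifying these lifts inside the common ambient space $\R^d\times\R^{m_\K}$ gives the claimed bijective equivalence with $\bigcap_l (Q_l\times \R^{m_\K-d_l})$ up to permutation of coordinates, and then projecting commutes with this bijection.

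There is no real obstacle here beyond bookkeeping: the content of the statement is exactly a repackaging of the definition using (i) the conjunction-to-product identity, (ii) the preimage-of-product identity, and (iii) the tautology relating existential quantification and canonical projection. The only place to be careful is specifying that each $\pi_l$ is a coordinate projection so that $\pi_l^{-1}(Q_l)$ admits the described product decomposition; this is implicit in the use of the word ``projection'' in the definition.
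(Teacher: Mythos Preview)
Your proposal is correct and matches the paper's approach: the paper states the result ``whose proof is straightforward from the definition'' and gives no further argument, so your unpacking of the definition via the preimage-of-product identity and the existential-quantifier/projection tautology is exactly what is intended. Your remark that the $\cong$ step relies on each $\pi_l$ being a coordinate projection (so that $\pi_l^{-1}(Q_l)$ is, up to reordering, $Q_l$ times the complementary coordinates) is the only point requiring care, and you have identified it.
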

    
The constructions derived in this paper will be based on nesting sequential representations. The correctness of the global extended representation of this construction is based on the following result.
\begin{proposition}\label{prop:1}
Let $\K=\{Q_1, \ldots, Q_{\L_\K}\} \in {\Ext}(Q)$ and $\K_l = \{Q_{l1}, \ldots, Q_{l\L_{\K_l}}\} \in {\Ext}(Q_l)$, for all $l=1, \ldots, \L_\K$. Then, 
$$
\K' := \displaystyle\bigcup_{l=1}^{\L_\K} \K_l \in \Ext(Q).
$$
\end{proposition}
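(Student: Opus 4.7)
The plan is to simply concatenate auxiliary variables: add to the outer representation's auxiliaries the independent auxiliary vectors provided by each inner representation, and then compose the projections. First I would unpack the hypotheses: from $\K \in \Ext(Q)$, I get an integer $m_\K \in \Z_+$ and projections $\pi_l: \R^d \times \R^{m_\K} \to \R^{d_l}$ such that $\x \in Q$ iff there exists $\w \in \R^{m_\K}$ with $\pi_l(\x,\w) \in Q_l$ for every $l=1,\ldots,\L_\K$. Similarly, from each $\K_l \in \Ext(Q_l)$, I get $m_{\K_l} \in \Z_+$ and projections $\pi_{lj}: \R^{d_l} \times \R^{m_{\K_l}} \to \R^{d_{lj}}$ witnessing the representation of $Q_l$.

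Next I would set the combined extended dimension to $m_{\K'} := m_\K + \sum_{l=1}^{\L_\K} m_{\K_l}$ and write a generic point of $\R^d \times \R^{m_{\K'}}$ as $(\x,\w,\u_1,\ldots,\u_{\L_\K})$ with $\w \in \R^{m_\K}$ and $\u_l \in \R^{m_{\K_l}}$. For each pair $(l,j)$ I would then define the new projection $\pi'_{lj}: \R^d \times \R^{m_{\K'}} \to \R^{d_{lj}}$ by the composition
$$
\pi'_{lj}(\x,\w,\u_1,\ldots,\u_{\L_\K}) := \pi_{lj}\bigl(\pi_l(\x,\w),\u_l\bigr),
$$
which is again a linear projection since it is a composition of projections applied to specific coordinate blocks.

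Finally I would verify the required equivalence for $\K'$. For the forward implication, suppose $\x \in Q$; applying $\K \in \Ext(Q)$ yields some $\w \in \R^{m_\K}$ with $\pi_l(\x,\w) \in Q_l$ for all $l$, and then applying $\K_l \in \Ext(Q_l)$ to each $\pi_l(\x,\w)$ yields $\u_l \in \R^{m_{\K_l}}$ with $\pi_{lj}(\pi_l(\x,\w),\u_l) \in Q_{lj}$ for all $j$. Assembling these auxiliary vectors into $(\w,\u_1,\ldots,\u_{\L_\K})$ shows $\pi'_{lj}(\x,\w,\u_1,\ldots,\u_{\L_\K}) \in Q_{lj}$ for every $(l,j)$. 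Conversely, if such auxiliaries exist, then for each fixed $l$ the vector $\u_l$ witnesses $\pi_l(\x,\w) \in Q_l$ via $\K_l$; hence $\w$ witnesses $\x \in Q$ via $\K$.

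The argument is essentially bookkeeping, so the only real obstacle is notational: one must be careful that the auxiliary vectors of the inner representations are kept independent across different $l$ (so they can be chosen separately), which is exactly what the direct sum construction $m_{\K'} = m_\K + \sum_l m_{\K_l}$ ensures. No further combinatorial or analytic difficulty arises, as the definition of extended representation is crafted to be closed under this kind of nesting.
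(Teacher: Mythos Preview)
Your proposal is correct and follows essentially the same approach as the paper: both set $m_{\K'}=m_\K+\sum_l m_{\K_l}$, define the composite projections $\pi'_{lj}(\x,\w,\u_1,\ldots,\u_{\L_\K})=\pi_{lj}(\pi_l(\x,\w),\u_l)$ (the paper writes this as $\pi_{ls}\circ(\pi_l\times id_{m_{\K_l}})\circ(id_{d\times m_\K}\times{\rm proj}_{m_{\K_l}})$ with a commutative diagram), and verify the biconditional by assembling the auxiliary vectors. The only cosmetic difference is that the paper packages the composition via a diagram while you write out the evaluation directly.
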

\begin{proof}
Let us define $m_{\K'}=m_\K + \sum_{l=1}^{\L_\K} m_{\K_l}$. For each $l=1, \ldots, \L_{\K}$ and $s = 1, \ldots, \L_{\K_l}$, and define $\pi_{ls}': \R^d \times \R^{m_{\K'}} \rightarrow \R^{d_{ls}}$ as:
$$
\pi_{ls}' = \pi_{ls} \circ  \left(\pi_l\times id_{m_{\K_l}}\right) \circ \left(id_{d \times m_\K}\times {\rm proj}_{m_{\K_l}}\right)
$$
where $\pi_l$ is the projection of the extended representation $\K$ for the set $Q_l$, and $\pi_{ls}$ the projection of the extended representation $\K_l$ for the set $Q_{ls}$. There we denote by ${\rm proj}_{m_{\K_l}}$ the projection from $\R^{\sum_{l=1}^{\L_\K} m_{\K_l}}$ onto the coordinates in $\R^{m_{\K_l}}$. The commutative diagram of the constructed projection is as follows:

\begin{center}
\centerline{\begin{xy}
     (0,20)*+{\R^d \times \R^{m_{\K'}}}="a"; 
    (70,20)*+{\R^d \times \R^{m_{\K}} \times \R^{m_{\K_l}}}="b";
     (70, 0)*+{\R^{d_l} \times \R^{m_{\K_l}}}="c";
     (0, 0)*+{\R^{d_{ls}}}="d";
     {\ar         "a";"b"}?*!/_8pt/{\tiny (id_{d \times m_\K}\times {\rm proj}_{m_{\K_\ell}})};
     {\ar@{>}    "c";"d"}?*!/^6pt/{\pi_{ls}};
     {\ar@{>} "b";"c"}?*!/^-30pt/{(\pi_l\times id_{m_{\K_l}})};
     {\ar@*{[|<5pt>]}@{>} "a";"d"}?*!/^8pt/{\pi_{ls}'};
\end{xy}}
\end{center}
Thus, with the above definitions, we get that:
$$
    \x \in Q \text{ if, and only if, }\exists \w \in \R^{m_{\K'}} \mbox{such that } \pi'_{ls}(\x,\w) \in Q_{ls} \text{ } \forall l=1, \ldots, \L_{\K'},
    $$
    is true. Considering that $\x\in Q$ if, and only if, there exist  $\w_0\in \R^{m_\K}$ verifying $\pi_l(\x,\w_0)\in Q_l$ for every $l=1,\ldots,\L_{\K}$ and $\w_l\in \R^{m_{\K_{l}}}$ with $l=1,\ldots,\L_{\K}$ satisfying $\pi_{ls}(\pi_{l}(\x,\w_0),\w_l)\in Q_{ls}$ for all $s=1,\ldots, \L_{\K_{l}}$; therefore taking $\w:=(\w_0,\w_1,\ldots,\w_{\L_{\K}})\in \R^{m_{\K'}}$ we get that $\x\in Q$ if, and only if, $\pi'_{ls}(\x,\w)\in Q_{ls}$ for every $l=1, \ldots, \L_\K\text{ and }s=1,\ldots, \L_{\K_{l}}$.
\end{proof}

In this paper, we analyze extended representations of a particular, but general, family of convex cones. In what follows we describe the cones that we study here.

\begin{definition}[$(p,\ba)$-Power cone]
    Given $p \in \R_+ \cup \{\infty\}$ ($p\geq 1$), $d_1, d_2 \in \Z_+$ ($d_2 \geq 1$), $\ba \in \Lambda_{d_2}$, the $(d_1+d_2)$-dimensional $(p,\ba)$-\textbf{power cone} is defined as:
    $$
    \Q_p^{d_1 + d_2}(\ba) = 
    \{(\x,\z) \in \R^{d_1} \times \R^{d_2}_+ : \|\x\|_p\leq \z^{\ba}\}.
    $$
    where $\|\cdot\|_p$ denotes the $\ell_p$-norm in $\R^{d_1}$, $\Lambda_d = \{(\alpha_1, \ldots, \alpha_d) \in \R^+: \sum_{j=1}^d \alpha_j =1\}$ is the $d$-dimensional simplex, and for any $\z\in \R^d_+$ and $\ba \in \Lambda_d$, $\z^{\ba} = \prod_{j=1}^{d} z_j^{\alpha_j} = z_1^{\alpha_1} \cdots z_d^{\alpha_d}$ is the weighted geometric mean of $\z$ with weights $\ba$.
\end{definition}

\begin{remark}
Some particular cases of the $(p,\ba)$-power cones are specially interesting. Specifically: 
\begin{itemize}
\item $\Q_p^{d_1+1} := \Q_p^{d_1+1}(1)$ is the $(d_1+1)$-dimensional $p$-\emph{order cone} for $p \geq 1$. In case $p=2$, $\Q_2^{d_1+1}$ is 
the \emph{second order cone}~\cite{lobo1998second}.
\item $\Q^{1+d_2}(\ba):=\Q_1^{1+d_2}(\ba)$ is the $\ba$-\emph{power cone}~\cite{chares_cones_nodate}.
\item $\Q_p^{d_1+2}(\alpha) := \Q_p^{d_1+2}(\alpha,1-\alpha)$ is the $(d_1+2)$-dimensional $(p,\alpha)$-\emph{power cone}, for any $p\geq 1$ and $0\leq \alpha\leq 1$. In case $d_1=1$, $\Q^{1+2}(\alpha):=\Q_1^{1+2}(\alpha)$ is the $\alpha$-\emph{power cone}~\cite{koecher1957positivitatsbereiche}.
\end{itemize}
\end{remark}

\section{Extended representations of the generalized power cone}\label{sec:3}

In this section, we provide valid extended representations of the $(p,\ba)$-power cone by means of simpler power cones. We analyze the complexities of the different representations.

The first result is based on splitting the constraint defining the $(p,\ba)$-cone in two simple constraints (with a single linking variable), being the obtained constraints of simpler cones, in this case, a $p$-order cone and an $\ba$-power cone. As a consequence, being able to represent the original power cone stems from adequately representing (separately) each of these two cones.
\begin{lemma}\label{lem:4}
Let $p \in \R_+ \cup \{\infty\}$ ($p\geq 1$), $d_1, d_2 \in \R_+$ ($d_2 \geq 1$), $\ba \in \Lambda_{d_2}$. Then, $\K = \{\Q_p^{d_1+1}, \Q^{1+d_2}(\ba)\} \in \Ext(\Q_p^{d_1 + d_2}(\ba))$ with $m_\K=1$ and $\L_\K=2$.
\end{lemma}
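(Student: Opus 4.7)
The plan is to decouple the defining inequality $\|\x\|_p \le \z^{\ba}$ by introducing a single auxiliary scalar $w \in \R$ that simultaneously upper-bounds $\|\x\|_p$ and is upper-bounded by $\z^{\ba}$. This motivates taking $m_\K=1$ with the two canonical projections
\[
\pi_1 : \R^{d_1+d_2}\times \R \to \R^{d_1+1}, \quad (\x,\z,w) \mapsto (\x,w),
\]
\[
\pi_2 : \R^{d_1+d_2}\times \R \to \R^{1+d_2}, \quad (\x,\z,w) \mapsto (w,\z),
\]
so that $\pi_1(\x,\z,w) \in \Q_p^{d_1+1}$ encodes $\|\x\|_p \le w$ (with $w\ge 0$), while $\pi_2(\x,\z,w) \in \Q^{1+d_2}(\ba)$ encodes $|w| \le \z^{\ba}$ together with $\z \in \R^{d_2}_+$.

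Next I would verify both directions of the biconditional in the definition of an extended representation. For the forward implication, given $(\x,\z) \in \Q_p^{d_1+d_2}(\ba)$, set $w := \|\x\|_p$; then $w \ge 0$ and $\|\x\|_p \le w$, so $(\x,w) \in \Q_p^{d_1+1}$, while $|w| = w = \|\x\|_p \le \z^{\ba}$ together with $\z \in \R^{d_2}_+$ yields $(w,\z) \in \Q^{1+d_2}(\ba)$. For the reverse direction, if such a $w$ exists, the constraint $(\x,w) \in \Q_p^{d_1+1}$ forces $w \ge 0$, hence $|w|=w$, and then chaining $\|\x\|_p \le w \le \z^{\ba}$ together with $\z \in \R^{d_2}_+$ gives $(\x,\z) \in \Q_p^{d_1+d_2}(\ba)$.

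Counting factors one obtains $\L_\K = 2$ (one $p$-order cone and one $\ba$-power cone) and $m_\K = 1$ (the single scalar $w$), matching the statement. There is no genuine obstacle here: the lemma is essentially the foundational ``splitting'' building block that separates the $\ell_p$-norm part from the weighted geometric mean part, and the subsequent sections of the paper will compose it with representations of each factor via Proposition~\ref{prop:1} to obtain the global extended representations in terms of second order cones.
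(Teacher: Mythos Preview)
Your proof is correct and is exactly the intended argument: the paper records only ``The proof is straightforward'', and the splitting via the single auxiliary scalar $w$ with the two projections $(\x,\z,w)\mapsto(\x,w)$ and $(\x,\z,w)\mapsto(w,\z)$ is precisely what is meant. Your verification of both implications and the count $m_\K=1$, $\L_\K=2$ fills in the details faithfully.
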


\begin{proof}
The proof is straightforward.
\end{proof}

In what follows we provide extended representations of $p$-order cones and $\ba$-power cones.

The first result, which has been already proven in \cite{bental2001,krokhmal_risk_2010} for $p=2$, is called the \emph{tower of variables} representation, and allows one to reformulate a $p$-order cone in dimension $d+1$ as $d-1$ $p$-order cones in dimension $3$ using $d-2$ new variables.

\begin{lemma}\label{l:tov}
  Let $p \in \R_+ \cup \{\infty\}$ ($p\geq 1$), and $d \in \R_+$. Then, $\K=\{\Q_p^{2+1}, \stackrel{d-1)}{\ldots}, \Q_p^{2+1}\} \in {\Ext}(\Q_{p}^{d+1})$ with $m_\K=d-2$ and $\L_\K=d-1$.
\end{lemma}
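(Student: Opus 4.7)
The plan is to exhibit an explicit tower of auxiliary variables and check both implications. I would introduce $d-2$ new scalar variables $w_1, \ldots, w_{d-2}$ (so $m_\K = d-2$), and use the $d-1$ projections $\pi_l : \R^{d+1} \times \R^{d-2} \to \R^3$ defined by
\[
\pi_1(\x,z,\w) = (x_1, x_2, w_1),\quad \pi_l(\x,z,\w)=(w_{l-1}, x_{l+1}, w_l) \text{ for } 2 \le l \le d-2,\quad \pi_{d-1}(\x,z,\w) = (w_{d-2}, x_d, z).
\]
The extended representation then corresponds to requiring each image to lie in a copy of $\Q_p^{2+1}$, i.e., to the chain of inequalities
\[
(|x_1|^p+|x_2|^p)^{1/p} \le w_1,\quad (w_{l-1}^p+|x_{l+1}|^p)^{1/p} \le w_l,\quad (w_{d-2}^p+|x_d|^p)^{1/p} \le z,
\]
with the convention that at $p=\infty$ the $p$-th powers are replaced by the corresponding $\max$-form (the argument is identical, so I would just remark that the case $p=\infty$ reduces to the coordinatewise bound $|x_j|\le z$).

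For the forward direction, given $(\x,z) \in \Q_p^{d+1}$, I would set $w_l := \bigl(\sum_{j=1}^{l+1} |x_j|^p\bigr)^{1/p}$ for $l=1,\ldots,d-2$. With these choices, the first $d-2$ inequalities hold with equality by construction (and each $w_l \ge 0$), while the last one reads $\|\x\|_p \le z$, which is exactly the hypothesis $(\x,z)\in \Q_p^{d+1}$.

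For the reverse direction, I would telescope upward: assuming all $d-1$ three-dimensional inequalities hold, an easy induction on $l$ shows $w_l^p \ge \sum_{j=1}^{l+1}|x_j|^p$, and the last inequality then yields $z^p \ge w_{d-2}^p + |x_d|^p \ge \sum_{j=1}^d |x_j|^p = \|\x\|_p^p$, so $(\x,z)\in \Q_p^{d+1}$. Counting gives $\L_\K=d-1$ and $m_\K=d-2$.

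There is no real obstacle here: both directions are one-line telescoping arguments once the auxiliary variables $w_l$ are interpreted as the running $p$-norms of the partial vectors $(x_1,\ldots,x_{l+1})$. The only mildly delicate point is bookkeeping the indices of the projections $\pi_l$ so that the first and last projections match the endpoints of the chain; everything else is routine.
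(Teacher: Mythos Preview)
Your proof is correct and follows essentially the same tower-of-variables idea as the paper. The only cosmetic difference is that the paper phrases the construction via an arbitrary binary tree on $\{1,\ldots,d\}$ (with one constraint per internal node and one auxiliary variable per non-root internal node), whereas you pick the specific ``linear chain'' tree corresponding to the running partial norms $w_l=\|(x_1,\ldots,x_{l+1})\|_p$; your version has the advantage of verifying both implications of the extended-representation definition explicitly, which the paper leaves implicit.
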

\begin{proof}
The construction of the $3$-dimensional cones to represent the $p$-order cone can be done using a tree-shaped structure. Recall that the cone $\Q_p^{d+1}$ is defined by an inequality $\|(x_1, \ldots, x_d)\|_p \leq w$, or equivalently:
$$
\dsum_{j=1}^{d} x_j^p \leq w^p.
$$
Let $T$ be binary a tree with vertices indexed by the subsets of $\{1, \ldots, d\}$, $2^{\{1, \ldots, d\}}$, and being the root node $R_0=\{1, \ldots, d\}$ and its leaves $\{1\}, \ldots, \{d\}$. At each level of the tree, the parent index set is partitioned into two disjoint sets until the singleton nodes are reached. 

For each non-leave node, $v \in 2^{\{1, \ldots, d\}}$, in the tree we denote by $c_1(v)$ and $c_2(v)$ its unique children nodes. Then, the following constraints equivalently represent the constraint defining the cone:

\begin{align*}
    w_v \geq \|(w_{c_1(v)}, w_{c_2(v)})\|_p, \forall v \in T \text{ (non leave nodes)}
\end{align*}

where node $w_{\{i\}}$ is identified with the original variable $x_i$, and the root node variable $w_{\{1, \ldots, d\}}$ is identified with the variable $w$. Thus, there are as many constraints as non-leave nodes in the tree ($d-1$) and as many variables as non-root and non-leave nodes in the tree ($d-2$).

\end{proof}

The proof of the above result is constructive and allows an easy representation of the $p$-order cone by means of a binary tree.  In Figure \ref{fig:tov} we show a tree for $d=7$, which results in the following representation of the $p$-order cone $\Q^{7+1}_p$.
\begin{align*}
    w &\geq \|(w_{\{1,2,3\}}, w_{\{4,5,6,7\}})\|_p, \\
    w_{\{1,2,3\}} &\geq \|(w_{\{1,2\}}, x_3)\|_p, \\
        w_{\{1,2\}} &\geq \|(x_1, x_2)\|_p, \\
    w_{\{4,5,6,7\}} &\geq \|(w_{\{4,5,6\}}, x_7)\|_p, \\
    w_{\{4,5,6\}} &\geq \|(w_{\{4,5\}},x_6)\|_p,\\
        w_{\{4,5\}} &\geq \|(x_4,x_5)\|_p
\end{align*}

\begin{figure}[h]
\begin{center}
\begin{tikzpicture}[
    level 1/.style = {sibling distance = 4.5cm},
    level 2/.style = {sibling distance = 2.5cm},
    level 3/.style = {sibling distance = 1.5cm},
    level 4/.style = {sibling distance = 1cm},
    level distance=30pt
]
\node {$\{1, 2, 3, 4, 5, 6, 7\}$}
    child {node {$\{1,2,3\}$}
        child {node {$\{1,2\}$}
            child {node {$\{1\}$}}
            child {node {$\{2\}$}}
              }
        child {node {$\{3\}$}}
          }
    child {node {$\{4,5, 6, 7\}$}
        child { node {$\{4,5, 6\}$}
            child {node {$\{4,5\}$}
                child {node {$\{4\}$}}
                child {node {$\{5\}$}}
                }
                child {node {$\{6\}$}}
                    }
            child {node {$\{7\}$}}    
            };
\end{tikzpicture}
\end{center}
\caption{Tree for the tower of variables construction of Lemma \ref{l:tov} for $d=7$.\label{fig:tov}}
\end{figure}
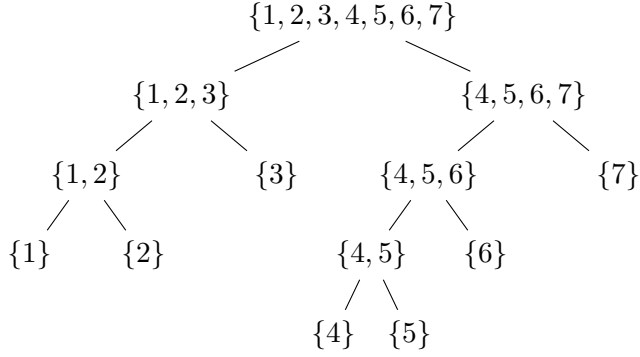

In the next result, we show that each of the $3$-dimensional cones that are part of the extended representation of the $p$-order cone can be represented as a family of $\frac{1}{p}$-power cones and a half-space.

We denote by $\H^{d+1} = \left\{(\x,w) \in \R^{d}\times \R: \dsum_{i=1}^{d} x_i \leq w\right\}$.
\begin{lemma}\label{lem:6}
    Let $p \in \R_+ \cup \{\infty\}$ ($p\geq 1$), and $d \in \R_+$. Then, $\K=\Big\{\Q^{1+2}\left(\frac{1}{p}\right), \stackrel{d)}{\ldots}, \Q^{1+2}\left(\frac{1}{p}\right), \H^{d+1}\Big\} \in \Ext(\Q_p^{d+1})$ with $m_\K=d$ and $\L_\K=d+1$.
\end{lemma}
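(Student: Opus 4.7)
The plan is to introduce $d$ auxiliary variables $\t=(t_1,\ldots,t_d)$, one per coordinate of $\x$, and use them to split the single $p$-norm inequality $\sum_{i=1}^d |x_i|^p\le w^p$ into $d$ one-variable power inequalities glued together by one linear constraint. Concretely, I would set $m_\K=d$ and define, for $i=1,\ldots,d$, the projection $\pi_i:\R^{d+1}\times\R^d\to\R^3$ by $\pi_i(\x,w,\t)=(x_i,t_i,w)$, so that requiring $\pi_i(\x,w,\t)\in \Q^{1+2}(1/p)$ amounts to $|x_i|\le t_i^{1/p}\,w^{1-1/p}$ (in particular $t_i,w\ge 0$). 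Then define $\pi_{d+1}(\x,w,\t)=(\t,w)$, so that $\pi_{d+1}(\x,w,\t)\in \H^{d+1}$ encodes $\sum_{i=1}^d t_i\le w$. This gives exactly $d$ cones of type $\Q^{1+2}(1/p)$, one halfspace, and $d$ extra variables, matching the stated complexity.

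For the ``only if'' direction, given $(\x,w)\in \Q_p^{d+1}$ (so $w\ge 0$ and $\|\x\|_p\le w$), I would choose the natural values $t_i := |x_i|^p\,w^{1-p}$ when $w>0$, and $t_i:=0$ when $w=0$ (in which case $\x=\0$ is forced and all inequalities hold trivially). For $w>0$ one checks directly that $t_i^{1/p}w^{1-1/p}=|x_i|$, so each $(x_i,t_i,w)\in \Q^{1+2}(1/p)$, and that $\sum_i t_i=\|\x\|_p^p\,w^{1-p}\le w^p\cdot w^{1-p}=w$, so $(\t,w)\in \H^{d+1}$.

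For the ``if'' direction, suppose $\t\in\R^d$ is such that all the projected points lie in their respective sets. From $(x_i,t_i,w)\in \Q^{1+2}(1/p)$ I have $t_i\ge 0$, $w\ge 0$ and $|x_i|\le t_i^{1/p}w^{1-1/p}$; raising to the $p$-th power yields $|x_i|^p\le t_i\,w^{p-1}$. Summing over $i$ and invoking $\sum_i t_i\le w$ from the halfspace constraint gives $\|\x\|_p^p\le \bigl(\sum_i t_i\bigr)w^{p-1}\le w^p$, hence $(\x,w)\in \Q_p^{d+1}$.

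The argument itself is short and algebraic, so I do not expect a serious conceptual obstacle; the main care point is the degenerate case $w=0$, where the exponent $1-1/p$ forces both sides of the $\Q^{1+2}(1/p)$ inequality to be zero and one must verify consistency of the choice of $t_i$. The boundary cases $p=1$ (where the representation collapses to the halfspace $\sum_i |x_i|\le w$ after identifying each $\Q^{1+2}(1)$ constraint $|x_i|\le t_i$ with $t_i=|x_i|$) and $p=\infty$ (where one reads $1/p=0$ and the cones $\Q^{1+2}(0)$ degenerate to $|x_i|\le w$, already linear) are handled by the same formal argument with the obvious conventions.
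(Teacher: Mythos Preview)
Your proof is correct and follows essentially the same approach as the paper: introduce auxiliary variables $t_i$, impose the $d$ power-cone constraints $|x_i|\le t_i^{1/p}w^{1-1/p}$ together with the halfspace $\sum_i t_i\le w$, and verify both directions via the choice $t_i=|x_i|^p w^{1-p}$. If anything, your write-up is slightly more explicit than the paper's in separating the two implications and treating the degenerate case $w=0$.
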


\begin{proof}
The proof is based on that, for $w>0$, the inequality defining the cone $\Q_p^{d+1}$, $\|\x\|_p \leq w$ is equivalent to:
$$
\dsum_{j=1}^{d} \frac{x_j^p}{w^{p-1}} \leq w
$$
Let $q$ be the conjugate of $p$, i.e., $\frac{1}{p}+\frac{1}{q}=1$. Since $p-1=\frac{p}{q}$, we get that be above equation is equivalent to:
$$
\dsum_{j=1}^{d} \frac{x_j^p}{w^{\frac{p}{q}}} \leq w
$$
Which can be split into the following inequalities by adding auxiliary variables \linebreak $t_1,\ldots, t_{d} \geq 0$:
\begin{align}
\dsum_{j=1}^{d} t_ j &\leq w, \\
x_j^p &\leq t_j w^{\frac{p}{q}}, \forall j=1, \ldots, d. \label{eq:p}
\end{align}
Constraints \eqref{eq:p} can be rewritten as:
$$
x_j \leq t_j^{\frac{1}{p}} w^{\frac{1}{q}}.
$$
Thus, $\Big(x_j, (t_j, w)\Big) \in \Q^{1+2}(\frac{1}{p})$, for $j=1, \ldots, d$.

In case $w=0$, the proof is straightforward.
\end{proof}

Finally, by Proposition \ref{prop:1}, and the previous lemmas we get that one can construct an extended representation of the $(d_1+d_2)$-dimensional $(p,\ba)$-power cone by means of power cones and half-spaces.
\begin{cor}\label{cor:8}
    Let $p \in \R_+ \cup \{\infty\}$ ($p\geq 1$), $d_1, d_2 \in \R_+$ ($d_2 \geq 1$), $\ba \in \Lambda_{d_2}$. Then,
    $$
    \K \!= \!\left\{\Q^{1+2}\left(\frac{1}{p}\right), \stackrel{2(d_1-1)}{\ldots}, \Q^{1+2}\left(\frac{1}{p}\right), \H^{2+1}, \stackrel{d_1-1)}{\ldots}, \H^{2+1}, \Q^{1+d_2}(\ba)\right\}\!\in\!\Ext(\Q_p^{d_1 + d_2}(\ba))
    $$
    with $m_\K=3d_1-3$ and $\L_\K=3d_1-2$.
\end{cor}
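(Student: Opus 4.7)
The plan is to combine the three earlier results by repeatedly invoking the nesting principle of Proposition \ref{prop:1}. First I would apply Lemma \ref{lem:4} to split the $(p,\ba)$-power cone into two factors:
$$
\{\Q_p^{d_1+1},\; \Q^{1+d_2}(\ba)\} \in \Ext\bigl(\Q_p^{d_1+d_2}(\ba)\bigr),
$$
with one linking variable. The $\ba$-power factor will remain in the final family untouched; it is only the $p$-order cone factor $\Q_p^{d_1+1}$ that must be broken down further.

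Next I would apply Lemma \ref{l:tov} to $\Q_p^{d_1+1}$, obtaining an extended representation made of $d_1-1$ copies of the three-dimensional cone $\Q_p^{2+1}$, using $d_1-2$ auxiliary variables. Then I would apply Lemma \ref{lem:6} to each of those $\Q_p^{2+1}$ factors, with $d=2$: each three-dimensional $p$-order cone is thereby represented by two copies of $\Q^{1+2}(1/p)$ and one half-space $\H^{2+1}$, at the cost of two new variables per factor. Proposition \ref{prop:1} guarantees that nesting these representations produces a global extended representation of $\Q_p^{d_1+d_2}(\ba)$ built from the pieces listed in the statement.

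Finally I would verify the two numerical claims by adding the contributions of the three stages. The cardinality count is immediate: $2(d_1-1)$ copies of $\Q^{1+2}(1/p)$, $d_1-1$ copies of $\H^{2+1}$, and one $\Q^{1+d_2}(\ba)$, giving $\L_\K = 3(d_1-1)+1 = 3d_1-2$. For the extended dimension one adds the auxiliary variables introduced at each stage: $1$ from Lemma \ref{lem:4}, $d_1-2$ from Lemma \ref{l:tov}, and $2(d_1-1)$ from the $d_1-1$ applications of Lemma \ref{lem:6}, summing to $m_\K = 1+(d_1-2)+2(d_1-1) = 3d_1-3$.

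The only mildly delicate point is bookkeeping: Proposition \ref{prop:1} stacks the auxiliary coordinates so one must be careful that the projections compose correctly and that the variable counts of the nested levels are disjoint. This is precisely what Proposition \ref{prop:1} establishes, so no genuine obstacle arises; the degenerate case $d_1=1$ should also be mentioned, where the formulas correctly reduce to $\K = \{\Q^{1+d_2}(\ba)\}$ with $m_\K=0$ and $\L_\K=1$, matching the identity $\Q_p^{1+d_2}(\ba) = \Q^{1+d_2}(\ba)$.
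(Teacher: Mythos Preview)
Your proposal is correct and follows exactly the same route as the paper: apply Lemma~\ref{lem:4}, then Lemma~\ref{l:tov}, then Lemma~\ref{lem:6} to each three-dimensional $p$-order factor, gluing everything via Proposition~\ref{prop:1}. In fact you are more explicit than the paper, which merely states that the result follows from the previous lemmas and Proposition~\ref{prop:1}; your variable and cardinality bookkeeping, as well as the remark on the degenerate case $d_1=1$, are welcome additions.
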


Here, the original cone $\Q_p^{d_1+d_2}(\ba)$ is first represented by a $p$-order cone and a $\ba$-power cone. Then, the $p$-order cone is represented by $d_1-1$ three-dimensional $p$-order cones. Finally, each of these cones is represented as simpler three-dimensional $\frac{1}{p}$-power cones.

The above representation of the $(p,\ba)$-power cone requires embedding the cone in a higher dimensional space with $3d_1-3$ extra dimensions and using $3d_1-2$ sets in the representation (power cones and half-spaces). In the following results, we provide alternative representations reducing the complexity of the representations.

\begin{theorem}\label{th:9}
Let $p \in \R_+ \cup \{\infty\}$ ($p\geq 1$), $d_1, d_2 \in \R_+$ ($d_2 \geq 1$), $\ba \in \Lambda_{d_2}$. Then, 
    $$
    \K \!= \!\!\left\{\Q^{1+d_2}(\ba),\H^{d_1+1}, \Q^{1+d_2+1}\left(\frac{1}{p},\frac{1}{q}\ba\right),\stackrel{d_1)}{\ldots},\Q^{1+d_2+1}\left(\frac{1}{p},\frac{1}{q}\ba\right)\right\} \in\!\Ext(\Q_p^{d_1 + d_2}(\ba))
    $$
 with $m_\K=d_1+1$ and $\L_\K=d_1+2$.
\end{theorem}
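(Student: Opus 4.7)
The plan is to introduce $d_1 + 1$ auxiliary variables---a scalar $w \in \R$ and a vector $\t = (t_1, \ldots, t_{d_1})$---and split the defining inequality $\|\x\|_p \leq \z^{\ba}$ into three families of constraints, one per set appearing in $\K$. First, I impose $w \leq \z^{\ba}$, encoded as $(w, \z) \in \Q^{1+d_2}(\ba)$. Second, I impose the halfspace $\sum_{i=1}^{d_1} t_i \leq w$, encoded as $(\t, w) \in \H^{d_1+1}$. Third, for each $i = 1, \ldots, d_1$, I impose $|x_i| \leq t_i^{1/p} (\z^{\ba})^{1/q}$, which---since $\frac{1}{p} + \frac{1}{q}\sum_j \alpha_j = 1$---is exactly the membership $(x_i, t_i, \z) \in \Q^{1+d_2+1}\bigl(\frac{1}{p}, \frac{1}{q}\ba\bigr)$. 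The projections $\pi_l$ in the extended representation simply read off the relevant coordinates of $(\x, \z, w, \t)$.

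For the sufficiency direction I would assume such $w, \t$ exist and chain the inequalities. Raising each third-type constraint to the $p$-th power and using $p - 1 = p/q$ yields $|x_i|^p \leq t_i (\z^{\ba})^{p-1}$; summing over $i$ and applying $\sum_i t_i \leq w$ gives $\sum_i |x_i|^p \leq w (\z^{\ba})^{p-1}$; finally $w \leq \z^{\ba}$ (together with $\z^{\ba} \geq 0$) produces $\|\x\|_p^p \leq (\z^{\ba})^p$, so $(\x, \z) \in \Q_p^{d_1 + d_2}(\ba)$.

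Conversely, given $(\x, \z) \in \Q_p^{d_1 + d_2}(\ba)$ with $\z^{\ba} > 0$, I would set $w := \z^{\ba}$ and $t_i := |x_i|^p/(\z^{\ba})^{p-1}$, and verify the three memberships by direct computation---in particular $\sum_i t_i = \|\x\|_p^p/(\z^{\ba})^{p-1} \leq \z^{\ba} = w$ uses the original cone inequality. The main delicate point I expect is the boundary case $\z^{\ba} = 0$, which forces $\x = \0$; here taking $w = 0$ and $\t = \0$ satisfies everything, but the third family of memberships must be interpreted using the natural convention $0^\alpha = 0$ for $\alpha > 0$ in the weighted geometric mean. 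Counting the added variables and cones then yields $m_\K = d_1 + 1$ and $\L_\K = d_1 + 2$, as claimed. The membership $\K \in \Ext(\Q_p^{d_1+d_2}(\ba))$ then follows from the definition, with the projections constructed coordinate-wise; alternatively, one could invoke Lemmas \ref{lem:4} and \ref{lem:6} together with Proposition \ref{prop:1}, but the direct construction avoids the extra tower of intermediate variables and gives a sharper count.
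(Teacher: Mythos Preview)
Your proof is correct and follows essentially the same approach as the paper's: the paper first applies the argument of Lemma~\ref{lem:6} with $w$ replaced by $\z^{\ba}$ (yielding the $d_1$ cones $\Q^{1+d_2+1}\!\bigl(\tfrac{1}{p},\tfrac{1}{q}\ba\bigr)$ and the constraint $\sum_j t_j \le \z^{\ba}$), then splits the latter into $\H^{d_1+1}$ and $\Q^{1+d_2}(\ba)$ via the auxiliary $w$, invoking Proposition~\ref{prop:1} to nest---whereas you carry out the identical construction in a single direct step. The variables $w,t_1,\ldots,t_{d_1}$, the constraints, and the counts all coincide; only the presentation differs (and your aside about Lemmas~\ref{lem:4} and~\ref{lem:6} actually describes the route to Theorem~\ref{th:10}, not Theorem~\ref{th:9}).
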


\begin{proof}
    For the first extension, it is enough to apply Lemma \ref{lem:6} replacing $w$ by $\z^{\ba}$ in the proof. Hence, we have that
    $$
    \left\{\Q^{d_1+d_2}_1(\ba),\Q^{1+d_2+1}\left(\frac{1}{p},\frac{1}{q}\ba\right),\stackrel{d_1)}{\ldots},\Q^{1+d_2+1}\left(\frac{1}{p},\frac{1}{q}\ba\right)\!\right\}
    \in \Ext(\Q_p^{d_1 + d_2}(\ba)).
    $$
    To represent the cones $\Q^{d_1+d_2}_1(\ba)$ in the extension, i.e., elements $(\t,\z) \in \R^{d_1} \times \R^{d_2}$ such that $\|\t\|_1\leq \z^{\ba}$, we split the constraint as:
    \begin{align}
    \|\t\|_1\leq w, \label{eq:2.1}\\
    w \leq \z^{\ba}.\label{eq:2.2}
     \end{align}
    where \eqref{eq:2.1} define the half-space $\H^{d_1+1}$ and \eqref{eq:2.2} the cone  $\Q^{1+d_2}(\ba)$. Thus, $\{\Q^{1+d_2}(\ba)$, $\H^{d_1+1}\}\in \Ext(\Q^{d_1+d_2}_1(\ba))$ with extended dimension $1$ (a single extra variable to link the constraints). 
    
    Applying Proposition \ref{prop:1}, we get the desired result.
\end{proof}

The above result provides an alternative extension requiring embedding the cone into a $d_1+1$ extra-dimensional space by means of $d_1+2$ sets (cones and half-spaces).

Finally, we provide the simplest extended representation of the cone. 

\begin{theorem}\label{th:10}
Let $p \in \R_+ \cup \{\infty\}$ ($p\geq 1$), $d_1, d_2 \in \R_+$ ($d_2 \geq 1$), $\ba \in \Lambda_{d_2}$. Then,
    $$
    \K = \left\{\Q^{1+2}\left(\frac{1}{p}\right), \stackrel{d_1)}{\ldots}, \Q^{1+2}\left(\frac{1}{p}\right), \H^{d_1+1}, \Q^{1+d_2}(\ba)\right\} \in \Ext(\Q_p^{d_1 + d_2}(\ba))
    $$
    with $m_\K=d_1+1$ and $\L_\K=d_1+2$.
\end{theorem}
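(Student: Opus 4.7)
The plan is to obtain this representation by composing the two constructive lemmas already proved, via the nesting principle of Proposition \ref{prop:1}. First I would apply Lemma \ref{lem:4} to split the defining inequality $\|\x\|_p \leq \z^{\ba}$ by introducing a single linking variable $w$, giving the intermediate extended representation
$$
\K_0 = \{\Q_p^{d_1+1}, \Q^{1+d_2}(\ba)\} \in \Ext(\Q_p^{d_1+d_2}(\ba)),
$$
with $m_{\K_0}=1$ and $\L_{\K_0}=2$, where the projections send $(\x,\z,w)$ to $(\x,w)$ and to $(w,\z)$ respectively.

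Next, I would handle the two pieces separately. The $\ba$-power cone $\Q^{1+d_2}(\ba)$ already appears as one of the atomic sets in the target family $\K$, so it requires no further decomposition (trivially, it is its own extended representation with extended dimension $0$). For the $p$-order cone $\Q_p^{d_1+1}$ I would invoke Lemma \ref{lem:6}, which produces
$$
\K_1 = \Big\{\Q^{1+2}\!\left(\tfrac{1}{p}\right), \stackrel{d_1)}{\ldots}, \Q^{1+2}\!\left(\tfrac{1}{p}\right), \H^{d_1+1}\Big\} \in \Ext(\Q_p^{d_1+1}),
$$
with $m_{\K_1}=d_1$ (the auxiliary variables $t_1,\ldots,t_{d_1}$) and $\L_{\K_1}=d_1+1$.

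Finally, I would combine these by Proposition \ref{prop:1}. Using its notation with $\K=\K_0$, $\K_1$ as above (handling $\Q_p^{d_1+1}$) and $\K_2=\{\Q^{1+d_2}(\ba)\}$ with $m_{\K_2}=0$ (handling the $\ba$-power cone itself), the resulting family is exactly the $\K$ of the statement. The extended dimension adds up to
$$
m_{\K}=m_{\K_0}+m_{\K_1}+m_{\K_2}=1+d_1+0=d_1+1,
$$
and the cardinality to $\L_{\K}=\L_{\K_1}+\L_{\K_2}=(d_1+1)+1=d_1+2$, as claimed.

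I do not anticipate a substantive obstacle: both underlying lemmas are already constructive, and Proposition \ref{prop:1} is designed precisely to justify this kind of sequential nesting, so the work is limited to bookkeeping the auxiliary variables (one $w$ from the split, and $d_1$ new $t_j$'s from the tower-of-squares type decomposition of the $p$-order cone) and checking that the number of atomic sets totals $d_1+2$. The only mild subtlety is to make sure that applying Lemma \ref{lem:6} to the lifted cone $\Q_p^{d_1+1}$ that appears inside $\K_0$ is legitimate, but this is immediate because Lemma \ref{lem:6} is stated for an arbitrary $p$-order cone, independently of whatever projection was used to reach it.
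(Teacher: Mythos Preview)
Your proposal is correct and follows exactly the same approach as the paper, which states that the result is straightforward from Lemmas \ref{lem:4} and \ref{lem:6} combined via Proposition \ref{prop:1}. Your bookkeeping of $m_\K=1+d_1+0=d_1+1$ and $\L_\K=(d_1+1)+1=d_1+2$ is precisely the content the paper leaves implicit.
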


\begin{proof}
    The proof is straightforward from lemmas \ref{lem:4} and \ref{lem:6}, and Proposition \ref{prop:1}.
\end{proof}

\begin{figure}[h]
\begin{minipage}[t]{\textwidth}
\hspace{1.5cm} % Alinea el minipage a la derecha
\schema{
   \schemabox{$\Q^{d_1+d_2}_p(\ba) \ \Big[\begin{array}{l} m_\K=1 \\ \L_\K=2\end{array}\Big]$}
}{
    \schemabox{
        \schema{\schemabox{$\Q^{d_1+1}_p \ \Big[\begin{array}{l} m_{\K_1}=d_1 \\ \L_{\K_1}=d_1+1\end{array}\Big]$}}{
        \schemabox{$\Q^{1+2}\Big(\frac{1}{p}\Big)$ \\ $\cdot$ \\ $\cdot \ \scriptstyle{d_1)}$ \\ $\cdot$ \\ $\Q^{1+2}\Big(\frac{1}{p}\Big)$ \\ $\H^{d_1+1}$}
        }
    \\ $\Q^{1+d_2}(\ba)$}
}
\end{minipage}
\caption{Extended representation of Theorem \ref{th:10}.\label{fig:C}}
\end{figure}

In Figure \ref{fig:C} we illustrate the extended representation of the cone derived from the above result. A summary of the results derived above is shown in Table \ref{table:complexity1}. Note that unless $d_1=1$, the representation in Theorem \ref{th:10} is strictly stronger than the one in Corollary \ref{cor:8}.

{\small
\begin{table}[h]
	\adjustbox{width=\textwidth}{\begin{tabular}{>{\centering\arraybackslash}p{1.9cm}>{\centering\arraybackslash}p{1cm}>{\centering\arraybackslash}p{0.99cm}>{\centering\arraybackslash}p{1.5cm}>{\centering\arraybackslash}p{2.7cm}>{\centering\arraybackslash}p{1.3cm}>{\centering\arraybackslash}p{0.95cm}>{\centering\arraybackslash}p{.85cm}}
			Result & $m_\K$ & $\L_\K$ & $\Q^{1+d_2}(\ba)$ & $\Q^{1+d_2+1}\left(\frac{1}{p},\frac{1}{q}\ba\right)$ &  $\Q^{1+2}(\frac{1}{p})$ & $\H^{d_1+1}$ & $\H^{2+1}$\\\hline\hline
			Corollary \ref{cor:8} & $3d_1-3$ & $3d_1-2$ & $1$& $-$ & $2(d_1-1)$ & $-$ & $d_1-1$\\\hline
			Theorem \ref{th:9} & $d_1+1$ & $d_1+2$ & $1$ & $d_1$ &$-$ & $1$ & $-$\\ \hline
			Theorem \ref{th:10} & $d_1+1$ & $d_1+2$ & $1$ & $-$ &$d_1$ & $1$ & $-$\\ \hline
	\end{tabular}}
	\caption{Summary of main results from Section \ref{sec:3}. \label{table:complexity1}}
\end{table}
}

\begin{remark}\label{rem:12}
Note that one can analogously represent more general cones in the form:
$$
\Q_p(\ba)[f,g,h] := \Big\{(\x,\z,\t) \in \R^{d_1}\times \R^{d_2} \times \R^{d_3}: \|f(\x)\|_p\leq h(\z)^{\ba}  + g(\t)  \Big\}
$$
where $f: \R^{d_1}\rightarrow \R^{d_1'}$, $g: \R^{d_3} \rightarrow \R$, and $h: \R^{d_2} \rightarrow \R^{d_2'}$ are linear functions, by adding to the representations above $d_1+d_2+1$ extra variables and linear equations (hyperplanes).
\end{remark}

\section{Optimal SOC representations of the rational power cone}\label{sec:4}

The extended representations derived in the previous section allow one to reduce the representation of the complex cone $\Q_p^{d_1+d_2}(\ba)$ by means of simpler power cones $\Q^{1+d_2} (\ba)$ for $\ba\in \Lambda_{d_2}$. For the sake of simplification, in the rest of the paper, we denote $d=d_2$. The main implications of these representations come from the fact that one can restrict the variables of an optimization problem to belong to a $(p,\ba)$-power cone, but formulate the constraints as constraints in the form $x \leq \z^{\ba}$ for some $\ba \in \Lambda_d$. In case the components of $\ba$ are rational ($\ba \in \mathbb{Q}^{d}$, or equivalently, $\ba = (\frac{s_1}{\hat s}, \ldots, \frac{s_d}{\hat s})\in \Lambda_d$ with $s_1, \ldots, s_d \in \Z_+$, $\gcd(s_1, \ldots, s_d)=1$, and $\hat{s} = \sum_{j=1}^d s_j$), one can go further and derive representations by means of second order cones.  In this section, we will present the main contribution of this paper, a mathematical optimization-based approach to construct a minimal extended representation of rational power cones (in terms of the cardinality of the representation and extended dimension).

The mathematical optimization model that we propose to construct a minimal extended representation of the cone is based on the notion of $\ba$-mediated graph that we define as follows.

\begin{definition}[$\ba$-Mediated graph]\label{def:mediatedset}
Let $\ba = \frac{1}{\hat{s}} (s_1, \ldots, s_d) \in \Lambda_d$ and $\A_{\ba} = \{\hat{s} \e_j: j = 1, \ldots, d-1\} \cup \{\0\}\subset \R^{d-1}$ (here $\e_j$ stands for the $j$th unit vector in $\R^{d-1}$). An $\ba$-\textbf{mediated graph} is a digraph $G = (\A_{\ba} \cup X, A)$ with $X \subseteq \R^{d-1}$ such that:
    \begin{enumerate}
        \item $\b_{\ba} = (s_1,\ldots, s_{d-1}) \in X$, and for each $\x\in X$ there exist $\y\neq \z\in \A_{\ba}\cup X$ such that $\x = \frac{1}{2}(\y+\z)$.
        \item $A = \Big\{(\x, \y): \exists \z \in (\A_{\ba}\cup X)\backslash\{\y\} \text{ such that } \x = \frac{1}{2}(\y+\z)\Big\}$.
    \end{enumerate}
\end{definition}

We denote by $\mathcal{G}_\ba$ the set of $\ba$-mediated graphs. For each $G = (\A_{\ba} \cup X,A) \in \mathcal{G}_\ba$ we call the set $X$, an $\ba$-\textbf{mediated set}. $G$ is said a \emph{minimal} $\ba$-mediated graph if the cardinality of its mediated set is minimal.

In Figure \ref{fig:mediated} we show a small example of $\ba$-mediated graph for $\ba = (\frac{1}{6},\frac{2}{6},\frac{3}{6})$. In this case, $\A_{\ba} = \{(6,0), (0,6), (0,0)\}$ (red dots) and $\b_\ba = (1,2)$ (green dot). The set of mediated nodes in the plot is $X= \{(1,2), (2,4), (4,2)\}$ (blue dots). Note that $\b_{\ba} = (1,2) \in X$ and for each of the elements $\x$ in $X$ there exist two different points in $\A_\ba \cup X$ (the arrows in the plot from each of the elements in $X$) such that $\x$ is the average of those vectors. For instance $(1,2) = \frac{1}{2} ((0,0) + (2,4))$, $(2,4) = \frac{1}{2} ((0,6) + (0,0))$, and $(3,2) = \frac{1}{2} ((0,4) + (6,0))$.

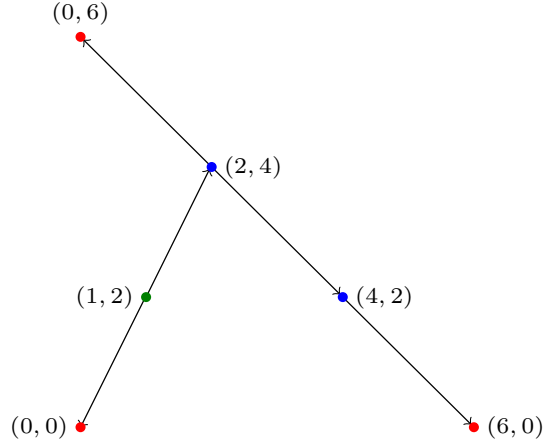
\begin{figure}
\centering
\adjustbox{width=0.5\textwidth}{% This file was created with tikzplotlib v0.10.1.
\begin{tikzpicture}

\definecolor{darkgray176}{RGB}{176,176,176}
\definecolor{green}{RGB}{0,128,0}

\pgfplotsset{every axis plot/.append style={very thin}}

\begin{axis}[
hide x axis,
hide y axis,
tick align=outside,
tick pos=left,
title={$\mathbf{\ba}=(1,2,3)/6$},
%x grid style={darkgray176},
xmin=-1, xmax=7,
xtick style={color=black},
%y grid style={darkgray176},
ymin=-1, ymax=7,
%ytick style={color=black},
axis equal,
]
% \path [draw=black, fill=black]
% (axis cs:0,0)
% --(axis cs:0.0268328157299975,0.187829710109982)
% --(axis cs:0.0134164078649987,0.194537914042482)
% --(axis cs:0.932917960675006,2.0335410196625)
% --(axis cs:1.06708203932499,1.9664589803375)
% --(axis cs:0.147580486514986,0.127455874717488)
% --(axis cs:0.134164078649987,0.134164078649987)
% --cycle;

\node[left] at (0,0) {\tiny $(0,0)$};
\node[left] at (1,2) {\tiny $(1,2)$};
\node[right] at (4,2) {\tiny $(4,2)$};
\node[right] at (2,4) {\tiny $(2,4)$};
\node[above] at (0,6) {\tiny $(0,6)$};
\node[right] at (6,0) {\tiny $(6,0)$};

\draw[<-] (0,0)--(0.97,1.96);
\draw[<-] (1.97,3.97)--(1,2);
\draw[->] (2.04,3.96)--(3.96,2.04);
\draw[->] (4.04,1.96) -- (5.96,0.04);
\draw[->] (1.96,4.04) -- (0.04, 5.96);
\addplot [semithick, red, mark=*, mark size=1.25, mark options={solid}, only marks]
table {%
6 0
};
\addplot [semithick, red, mark=*, mark size=1.25, mark options={solid}, only marks]
table {%
0 6
};
\addplot [semithick, red, mark=*, mark size=1.25, mark options={solid}, only marks]
table {%
0 0
};
\addplot [semithick, green, mark=*, mark size=1.25, mark options={solid}, only marks]
table {%
1 2
};
\addplot [semithick, blue, mark=*, mark size=1.25, mark options={solid}, only marks]
table {%
2 4
};
\addplot [semithick, blue, mark=*, mark size=1.25, mark options={solid}, only marks]
table {%
4.00000000000003 2.00000000000005
};
\end{axis}

\end{tikzpicture}}
\caption{A $\left(\frac{1}{6},\frac{2}{6},\frac{3}{6}\right)$-mediated graph. \label{fig:mediated}}
\end{figure}

Given an $\ba$-mediated graph, $G=(\A_{\ba} \cup X,A)$, note that all the nodes belong to the convex hull of $\A_{\ba}$. Furthermore, by definition of $\ba$-mediated graph, if $(\x, \y), (\x, \z)$ are arcs in $G$, then $\boldsymbol{\mu}^\x = \frac{1}{2} (\boldsymbol{\mu}^\y + \boldsymbol{\mu}^\z)$, where $\boldsymbol{\mu}^\t \in \Lambda_d$ are the coordinates of $\t \in \A_{\ba} \cup X$ with respect to the elements in $\A_\ba$, i.e., $\t = \sum_{j=1}^{d-1} \mu_j^\t \hat{s} \e_j + \mu_d^\t \0$.

The following result states the correspondence between the extended representations and the above defined mediated graphs.

\begin{theorem}
    There exists a \emph{one-to-one} correspondence between $\mathcal{G}_\ba$ and the set of extended representations by second order cones of $\Q^{1+d}(\ba)$. 
\end{theorem}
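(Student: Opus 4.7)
The plan is to construct mutually inverse maps between $\mathcal{G}_\ba$ and the set of extended representations of $\Q^{1+d}(\ba)$ by $3$-dimensional rotated second order cones (i.e.\ copies of $\Q^{1+2}(1/2)$).

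\textbf{From graph to representation.} Given $G = (\A_\ba \cup X, A) \in \mathcal{G}_\ba$, I introduce one nonnegative variable $w_\x$ for every $\x \in X$, with the identifications $w_{\b_\ba} = x$, $w_{\hat{s}\e_j} = z_j$ for $j = 1,\ldots,d-1$, and $w_\0 = z_d$. For each $\x \in X$ I fix a mediator pair $(\y_\x, \z_\x)$ supplied by the arcs of $G$ and impose the rotated SOC constraint $w_\x^2 \leq w_{\y_\x}\, w_{\z_\x}$. This yields a family $\K(G)$ of $|X|$ copies of $\Q^{1+2}(1/2)$ using $|X|-1$ extra variables.

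\textbf{Validity of $\K(G)$.} For the forward inclusion, if $x \leq \z^\ba$, set $w_\x := \z^{\boldsymbol{\mu}^\x}$ for each $\x \in X \setminus \{\b_\ba\}$, where $\boldsymbol{\mu}^\x \in \Lambda_d$ denotes the barycentric coordinate of $\x$ with respect to $\A_\ba$. The midpoint axiom gives $\boldsymbol{\mu}^\x = (\boldsymbol{\mu}^{\y_\x} + \boldsymbol{\mu}^{\z_\x})/2$, so $w_\x^2 = w_{\y_\x} w_{\z_\x}$ automatically; at the root (where $\boldsymbol{\mu}^{\b_\ba} = \ba$) the constraint becomes $x^2 \leq \z^{2\ba}$, equivalent to $x \leq \z^\ba$. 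Conversely, given a feasible assignment, introduce $f(\x) := \log w_\x - \sum_j \mu_j^\x \log z_j$ on $\A_\ba \cup X$; the SOC constraints translate into the concavity inequalities $f(\x) \leq (f(\y_\x) + f(\z_\x))/2$, while $f \equiv 0$ on $\A_\ba$. If $f(\b_\ba) > 0$, let $M := \max_X f > 0$ and pick $\x^*$ lexicographically maximal in $\{\x \in X : f(\x) = M\}$; both entries of its mediator pair must also attain $f = M$ and therefore lie in $X$ (since $f = 0$ on $\A_\ba$), but $\x^*$ being a strict midpoint of two distinct points of $X$ forces one of them to be lex-strictly larger than $\x^*$, contradicting maximality. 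Hence $x = w_{\b_\ba} \leq \z^\ba$.

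\textbf{From representation to graph.} For the reverse map I associate to each variable $w$ of an SOC representation a unique exponent $\boldsymbol{\mu}^w \in \Lambda_d$, anchored by $\boldsymbol{\mu}^{z_j} = \e_j$ and $\boldsymbol{\mu}^x = \ba$, and propagated through each rotated cone $w^2 \leq uv$ of the representation by the forced relation $\boldsymbol{\mu}^w = (\boldsymbol{\mu}^u + \boldsymbol{\mu}^v)/2$. Scaling by $\hat{s}$ turns the collection of exponents into points of $\R^{d-1}$ containing $\A_\ba$ and $\b_\ba$, and the induced midpoint relations form an $\ba$-mediated graph. A direct check shows that the two constructions compose to the identity on both sides.

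\textbf{Main obstacle.} The delicate step is the reverse direction: one must show that every auxiliary variable of an arbitrary valid SOC representation admits a \emph{unique} and \emph{consistent} exponent $\boldsymbol{\mu}^w$, so that the midpoint structure is forced rather than merely one possibility among many. Concretely, one needs to certify that if a rotated $3$-dim SOC $w^2 \leq uv$ appears in a (minimal) representation of $\Q^{1+d}(\ba)$, then there exist feasible points on the boundary of the cone on which this constraint is active and pins down $\boldsymbol{\mu}^w = (\boldsymbol{\mu}^u + \boldsymbol{\mu}^v)/2$. The log-linear nature of the rotated SOC, which makes it precisely a midpoint relation in exponent space, is the key ingredient that ensures this propagation is well defined.
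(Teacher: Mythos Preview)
Your construction of the map from mediated graphs to SOC representations, and the forward-validity check via $w_\x := \z^{\boldsymbol{\mu}^\x}$, is exactly the paper's argument. Your backward-validity step, however, is sharper than what the paper writes: the paper simply asserts $\bar w_\ell \le \z^{\boldsymbol{\mu}^\ell}$ and chains the inequalities, which looks circular because mediated graphs may contain directed cycles among the mediated nodes (see the paper's own Figure~\ref{fig:mediated}, where $(2,4)$ and $(4,2)$ reference each other). Your maximum-principle argument with the lexicographically maximal $\x^*$ closes this gap cleanly and is a genuine improvement in rigor over the paper's one-line justification.

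On the reverse map (SOC representation $\to$ mediated graph), you and the paper are in the same position: the paper dismisses it with ``the above proof is fully reversible,'' and you sketch an exponent-propagation scheme and honestly flag the uniqueness/consistency of $\boldsymbol{\mu}^w$ as the main obstacle. Neither treatment is complete; your proposal at least isolates what would need to be proved (that every rotated SOC constraint in a valid representation forces the midpoint relation on exponents). So: same approach overall, with your write-up being the more careful of the two on the substantive direction, and equally incomplete on the bijection claim.
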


\begin{proof}
 Let us first proof any $\ba$-mediated graph $G=(\A_{\ba} \cup X, A)$ with $X=\{\x_1,\ldots,\x_\L\}$ induces an extended representation of $\Q^{1+d}(\ba)=\{x\leq \z^{\ba}\}$ using second order cones  $\Q^{1+2}(\frac{1}{2})$. 
    
    Let us define the set of indices $\mathbb{L}=\{1,\ldots,\L+d\}$ such that $\x_\L=\b_{\ba}$, $\x_{\L+i}=\hat{s}\e_i$ for $i=1,...,d-1$, and $\x_{\L+d}=\0$. Additionally, we denote by $\sigma(j) \subset \mathbb{L}$ (with $|\sigma(j)|=2$) the set of indices for the $j$-th mediated element such that $\x_j=\frac{1}{2}\sum_{\ell\in\sigma(j)}\x_\ell$, for $j=1,\ldots, \L$. 
    
    With the above notation, we have that:
    $$
    \pi_j(\bar{\w})=(\bar{w}_j,\bar{\w}_{\sigma(j)}),  \forall j=1,\ldots,\L
    $$
    where $\bar{\w}=(w_1,\ldots,w_{\L-1},x,z_1,\ldots,z_d)$, define an extended second order cone representation of the cone $\Q^{1+d}(\ba)$. 
    
   Let us consider consider $(x,\z)\in \Q^{1+d}(\ba)$, that is, $x\leq \z^{\ba}$. As already mentioned, there always exists $\w\in \R^{\L-1}$ verifying $w_j\leq \z^{\boldsymbol{\mu}^j}$ for $j=1,\ldots, \L-1$, where $\boldsymbol{\mu}^j$ are the coordinates of $\x_j$ as point in the convex hull of $\A_{\ba}$ and
    $$
    \bar{w}_j\leq \dprod_{\ell\in\sigma(j)}\bar{w}_{\ell}^{\frac{1}{2}}.
    $$

    On the other hand, if $\pi_j(\bar{\w})=(\bar{w}_j,\bar{\w}_{\sigma(j)})\in \Q^{1+2}\left(\frac{1}{2}\right), \forall j=1,\ldots,\L.$ In particular, $\pi_\L(\bar{\w})=(x,\bar{\w}_{\sigma(\L)})\in \Q^{1+2}\left(\frac{1}{2}\right)$ and then: $$\dprod_{\ell\in\sigma(\L)}\bar{w}_\ell^{\frac{1}{2}}\leq \dprod_{\ell\in\sigma(\L)}\z^{\frac{1}{2}\boldsymbol{\mu^\ell}}=\z^{\frac{1}{2}\dsum_{\ell\in\sigma(\L)}\boldsymbol{\mu}^\ell}=\z^{\ba}.$$
    Thus, $(x,\z)\in \Q^{1+d}(\ba).$ 
    
    Note that the above proof is fully reversible, so any second order cone extended representation for $\Q^{1+d}(\ba)$ can be identified with an element in $\mathcal{G}_\ba$.
\end{proof}

The above result does not just relate mediated graphs with SOC representations of $\Q^{1+d}(\ba)$, this makes the problem of finding an $\ba$-mediated graph of the minimum cardinality equivalent to the problem of finding the minimum SOC representation. Moreover, the cardinality of the representation is the cardinality of the mediated set in the mediated graph.

\begin{remark}[Lower and Upper bounds for the cardinality of a $\ba$-mediated graph]\label{bounds}
In \cite{wang2022weighted}, the author proves two lower bounds for the number of nodes in a minimal $\ba$-mediated graph, one based on the dimension of the nodes ($d$) and other in the sum of the elements $s_1, \ldots, s_d$, $\hat{s}$, being then a suitable lower bound for the $\ba$-mediated set the maximum of both values, $\Delta_{\ba} = \max \{d-1, \lceil\log_2 (\hat{s})\rceil\}$. 

On the other hand, as already mentioned in \cite{blanco2014revisiting}, one can overestimate the size of the representation for the cone by using a binary representation of the elements $s_1, \ldots, s_d$. Specifically, for $a \in \mathbb{Z}_+$, we denote by $\Omega(a)$ the set of exponents with strictly positive coefficients in the binary decomposition of $a$, i.e., if $a = \sum_{k=0}^K b_k 2^k$, with $b_k \in \{0,1\}$ for $k=1, \ldots, K$, then $\Omega(a)=\{k : b_k=1\}_{k=0}^K$. Then, an upper bound for the minimum size of an $\ba$-mediated set is $\dsum_{i=1}^d|\Omega(s_i)|+|\Omega(2^{\lceil\log_2 (\hat{s})\rceil}-\hat{s})|-1$. The representations reaching this upper bound are the most common when representing $p$-order cones in several applications in the literature (see e.g. \cite{blanco2022fairness,blanco2024optimal,blanco2014revisiting}).
\end{remark}

\subsection{A mathematical optimization model to construct minimal mediated graphs}\label{model}

Now, we are in conditions to present the mathematical optimization model that we propose to construct minimal $\ba$-mediated graphs, and then also to obtain minimal representations of power cones. We call the problem the {\bf Minimum Cardinality $\ba$-Mediated Graph Problem} ($\ba$-MCMGP, for short).

Given $\ba \in \Lambda_d$, the goal of the $\ba$-MCMGP is to construct an $\ba$-mediated graph with the minimum number of nodes. The $\ba$-MCMGP has the flavor of a Continuous Location problem, where a set of services (the mediated elements) is to be located in a given space, taking into account a given set of \textit{users} (the elements in $\A_\ba$) located at certain coordinates in the space.  The interested reader is referred to \cite{blanco2014revisiting,blanco2016continuous,nickelpuerto} for further details on this family of problems.

We would like first to highlight that the $\ba$-MCMGP may not be unique, that is, it may exist different $\ba$-mediated graphs with minimum cardinality. A counterexample is described in the following example.

\begin{example}
    Let us consider $\ba = \frac{1}{74} (13, 17, 44) \in \Lambda_3$. In Figure \ref{fig:counter} we plot three different minimal $\ba$-mediated graphs with the same cardinality of their mediated sets ($7$). The three mediated graphs provide different extended representations of the cone with the same cardinality. Specifically:
    
    \begin{center}
    \begin{tabular}{p{4cm}p{4cm}p{4cm}}
    {$\!\begin{aligned}
        x^2 &\leq w_1z_3\\
        w_1^2 &\leq w_2w_3\\
        w_2^2 &\leq w_4 w_5\\
        w_3^2 &\leq w_5 z_2\\
        w_4^2 &\leq w_3 w_6\\
        w_5^2 &\leq x w_6\\
        w_6^2 &\leq w_4 z_1 \end{aligned}$} & {$\!\begin{aligned}
        x^2 &\leq w_1z_3\\
        w_1^2 &\leq w_2w_3\\
        w_2^2 &\leq x w_4\\
        w_3^2 &\leq w_5 z_2\\
        w_4^2 &\leq w_5 z_1\\
        w_5^2 &\leq w_4 w_6\\
        w_6^2 &\leq w_2 z_2 \end{aligned}$} & {$\!\begin{aligned}
        x^2 &\leq w_1z_3\\
        w_1^2 &\leq w_2w_3\\
        w_2^2 &\leq x w_4\\
        w_3^2 &\leq z_1 z_2\\
        w_4^2 &\leq w_1 w_5\\
        w_5^2 &\leq w_6 z_2\\
        w_6^2 &\leq w_2 w_4 \end{aligned}$}
    \end{tabular}
	\end{center}

	Observe that the three minimal representations are different and non-equivalent, in the sense that there are no possible permutations in the indices of the $w$-variables that transform one representation into another.
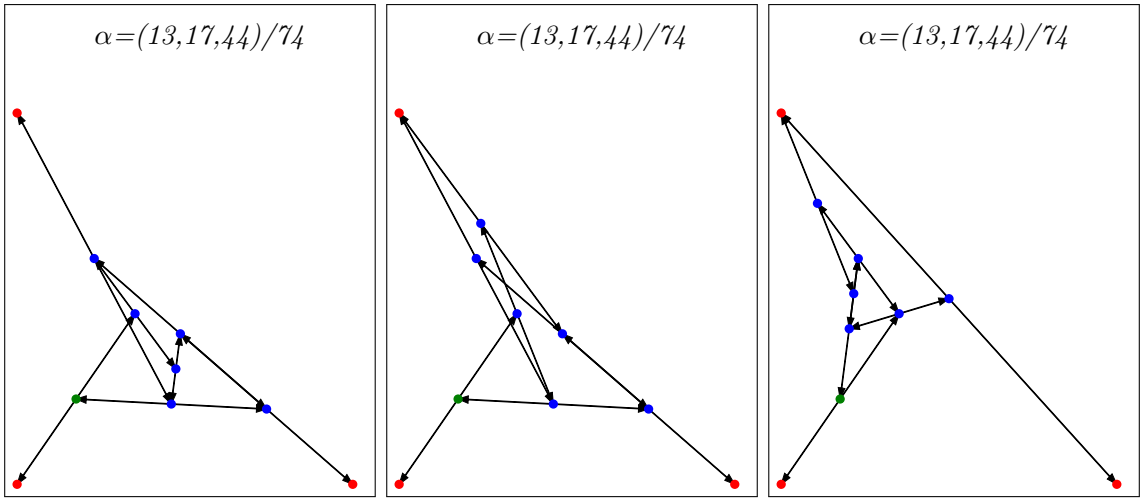
\begin{figure}[h]
\fbox{\adjustbox{width=0.31\textwidth}{% This file was created with tikzplotlib v0.10.1.
\begin{tikzpicture}

\definecolor{darkgray176}{RGB}{176,176,176}
\definecolor{green}{RGB}{0,128,0}

\begin{axis}[
hide x axis,
hide y axis,
tick align=outside,
tick pos=left,
title={\(\displaystyle \mathbf{\alpha}\)=(13,17,44)/74},
x grid style={darkgray176},
xmin=-15.0900432900433, xmax=95.4900432900433,
xtick style={color=black},
y grid style={darkgray176},
ymin=-1, ymax=81.4,
ytick style={color=black}
]
\path [draw=black, fill=black]
(axis cs:0,0)
--(axis cs:0.760714442404044,2.21298746881177)
--(axis cs:1.28896240133388,1.80903314727719)
--(axis cs:12.9404231625267,17.0455587580678)
--(axis cs:13.0595768374733,16.9544412419322)
--(axis cs:1.40811607628046,1.71791563114157)
--(axis cs:1.93636403521029,1.31396130960699)
--cycle;
\path [draw=black, fill=black]
(axis cs:26,34)
--(axis cs:25.239285557596,31.7870125311882)
--(axis cs:24.7110375986661,32.1909668527228)
--(axis cs:13.0595768374733,16.9544412419322)
--(axis cs:12.9404231625267,17.0455587580678)
--(axis cs:24.5918839237195,32.2820843688584)
--(axis cs:24.0636359647897,32.686038690393)
--cycle;
\path [draw=black, fill=black]
(axis cs:17.0000003178914,45)
--(axis cs:18.9785165920415,43.7504107263091)
--(axis cs:18.4638349807816,43.3293076046976)
--(axis cs:26.0580467982624,34.0474928332644)
--(axis cs:25.9419532017376,33.9525071667356)
--(axis cs:18.3477413842568,43.2343219381687)
--(axis cs:17.8330597729969,42.8132188165572)
--cycle;
\path [draw=black, fill=black]
(axis cs:35.0000015894572,23)
--(axis cs:33.0214851855179,24.2495890681913)
--(axis cs:33.5361667530397,24.6706922432605)
--(axis cs:25.9419532066705,33.9525071607065)
--(axis cs:26.0580467933295,34.0474928392935)
--(axis cs:33.6522603396988,24.7656779218475)
--(axis cs:34.1669419072206,25.1867810969166)
--cycle;
\path [draw=black, fill=black]
(axis cs:74,0)
--(axis cs:71.7990235128369,0.794797145789423)
--(axis cs:72.2110869817306,1.31674417145706)
--(axis cs:54.9535279480063,14.9411337940976)
--(axis cs:55.0464745951252,15.0588662059024)
--(axis cs:72.3040336288494,1.43447658326179)
--(axis cs:72.7160970977431,1.95642360892942)
--cycle;
\path [draw=black, fill=black]
(axis cs:36.0000025431315,30)
--(axis cs:38.2009790302947,29.2052028542106)
--(axis cs:37.788915561401,28.6832558285429)
--(axis cs:55.0464745951252,15.0588662059024)
--(axis cs:54.9535279480063,14.9411337940976)
--(axis cs:37.6959689142821,28.5655234167382)
--(axis cs:37.2839054453884,28.0435763910706)
--cycle;
\path [draw=black, fill=black]
(axis cs:0,74)
--(axis cs:1.76109379980539,72.4590429505379)
--(axis cs:1.18739965243411,72.1227394785488)
--(axis cs:17.0647026653393,45.0379289630063)
--(axis cs:16.9352979704436,44.9620710369937)
--(axis cs:1.05799495753833,72.0468815525362)
--(axis cs:0.484300810167042,71.7105780805471)
--cycle;
\path [draw=black, fill=black]
(axis cs:34.0000006357829,16)
--(axis cs:32.2389068359775,17.5409570494621)
--(axis cs:32.8126009833488,17.8772605214512)
--(axis cs:16.9352979704436,44.9620710369937)
--(axis cs:17.0647026653393,45.0379289630063)
--(axis cs:32.9420056782446,17.9531184474638)
--(axis cs:33.5156998256159,18.2894219194529)
--cycle;
\path [draw=black, fill=black]
(axis cs:34.0000006357829,16)
--(axis cs:33.581393728716,18.3023397354335)
--(axis cs:34.2397101294442,18.2082944456409)
--(axis cs:34.9257553788488,23.0106066116307)
--(axis cs:35.0742478000657,22.9893933883693)
--(axis cs:34.3882025506612,18.1870812223795)
--(axis cs:35.0465189513894,18.0930359325869)
--cycle;
\path [draw=black, fill=black]
(axis cs:36.0000025431315,30)
--(axis cs:36.4186094501985,27.6976602645665)
--(axis cs:35.7602930494702,27.7917055543591)
--(axis cs:35.0742478000657,22.9893933883693)
--(axis cs:34.9257553788488,23.0106066116307)
--(axis cs:35.6118006282533,27.8129187776205)
--(axis cs:34.953484227525,27.9069640674131)
--cycle;
\path [draw=black, fill=black]
(axis cs:13,17)
--(axis cs:15.2526854754043,17.6335677934541)
--(axis cs:15.2210546519985,16.9693204818218)
--(axis cs:34.0035680218813,16.0749151103345)
--(axis cs:33.9964332496845,15.9250848896655)
--(axis cs:15.2139198798017,16.8194902611529)
--(axis cs:15.1822890563959,16.1552429495207)
--cycle;
\path [draw=black, fill=black]
(axis cs:55.0000012715658,15)
--(axis cs:52.7473157961614,14.3664322065459)
--(axis cs:52.7789466195672,15.0306795181781)
--(axis cs:33.9964332496845,15.9250848896655)
--(axis cs:34.0035680218813,16.0749151103345)
--(axis cs:52.786081391764,15.1805097388471)
--(axis cs:52.8177122151698,15.8447570504793)
--cycle;
\path [draw=black, fill=black]
(axis cs:55.0000012715658,15)
--(axis cs:52.7990247844026,15.7947971457894)
--(axis cs:53.2110882532963,16.3167441714571)
--(axis cs:35.9535292195721,29.9411337940976)
--(axis cs:36.046475866691,30.0588662059024)
--(axis cs:53.3040349004152,16.4344765832618)
--(axis cs:53.7160983693089,16.9564236089294)
--cycle;
\path [draw=black, fill=black]
(axis cs:17.0000003178914,45)
--(axis cs:19.2009768761958,44.2052030512171)
--(axis cs:18.7889134540209,43.6832559886662)
--(axis cs:36.0464758614219,30.0588662100621)
--(axis cs:35.9535292248411,29.9411337899379)
--(axis cs:18.6959668174401,43.5655235685419)
--(axis cs:18.2839033952652,43.043576505991)
--cycle;
\addplot [semithick, red, mark=*, mark size=1.5, mark options={solid}, only marks]
table {%
74 0
};
\addplot [semithick, red, mark=*, mark size=1.5, mark options={solid}, only marks]
table {%
0 74
};
\addplot [semithick, red, mark=*, mark size=1.5, mark options={solid}, only marks]
table {%
0 0
};
\addplot [semithick, green, mark=*, mark size=1.5, mark options={solid}, only marks]
table {%
13 17
};
\addplot [semithick, blue, mark=*, mark size=1.5, mark options={solid}, only marks]
table {%
26 34
};
\addplot [semithick, blue, mark=*, mark size=1.5, mark options={solid}, only marks]
table {%
55.0000012715658 15
};
\addplot [semithick, blue, mark=*, mark size=1.5, mark options={solid}, only marks]
table {%
17.0000003178914 45
};
\addplot [semithick, blue, mark=*, mark size=1.5, mark options={solid}, only marks]
table {%
35.0000015894572 23
};
\addplot [semithick, blue, mark=*, mark size=1.5, mark options={solid}, only marks]
table {%
34.0000006357829 16
};
\addplot [semithick, blue, mark=*, mark size=1.5, mark options={solid}, only marks]
table {%
36.0000025431315 30
};
\end{axis}

\end{tikzpicture}}}~\fbox{\adjustbox{width=0.31\textwidth}{% This file was created with tikzplotlib v0.10.1.
\begin{tikzpicture}

\definecolor{darkgray176}{RGB}{176,176,176}
\definecolor{green}{RGB}{0,128,0}

\begin{axis}[
hide x axis,
hide y axis,
tick align=outside,
tick pos=left,
title={\(\displaystyle \mathbf{\alpha}\)=(13,17,44)/74},
x grid style={darkgray176},
xmin=-15.0900432900433, xmax=95.4900432900433,
xtick style={color=black},
y grid style={darkgray176},
ymin=-1, ymax=81.4,
ytick style={color=black}
]
\path [draw=black, fill=black]
(axis cs:0,0)
--(axis cs:0.760714442404044,2.21298746881177)
--(axis cs:1.28896240133388,1.80903314727719)
--(axis cs:12.9404231625267,17.0455587580678)
--(axis cs:13.0595768374733,16.9544412419322)
--(axis cs:1.40811607628046,1.71791563114157)
--(axis cs:1.93636403521029,1.31396130960699)
--cycle;
\path [draw=black, fill=black]
(axis cs:26,34)
--(axis cs:25.239285557596,31.7870125311882)
--(axis cs:24.7110375986661,32.1909668527228)
--(axis cs:13.0595768374733,16.9544412419322)
--(axis cs:12.9404231625267,17.0455587580678)
--(axis cs:24.5918839237195,32.2820843688584)
--(axis cs:24.0636359647897,32.686038690393)
--cycle;
\path [draw=black, fill=black]
(axis cs:0,74)
--(axis cs:1.76109378723384,72.4590429361704)
--(axis cs:1.1873996371189,72.1227394688616)
--(axis cs:17.0647023477573,45.0379289624784)
--(axis cs:16.9352976522427,44.9620710375216)
--(axis cs:1.05799494160425,72.0468815439048)
--(axis cs:0.484300791489306,71.710578076596)
--cycle;
\path [draw=black, fill=black]
(axis cs:34,16)
--(axis cs:32.2389062127662,17.5409570638296)
--(axis cs:32.8126003628811,17.8772605311384)
--(axis cs:16.9352976522427,44.9620710375216)
--(axis cs:17.0647023477573,45.0379289624784)
--(axis cs:32.9420050583958,17.9531184560952)
--(axis cs:33.5156992085107,18.289421923404)
--cycle;
\path [draw=black, fill=black]
(axis cs:0,74)
--(axis cs:1.97851629578158,72.750410760559)
--(axis cs:1.46383469181137,72.3293076300379)
--(axis cs:18.0580467974402,52.0474928342693)
--(axis cs:17.9419532025598,51.9525071657307)
--(axis cs:1.34774109693087,72.2343219614993)
--(axis cs:0.833059492960665,71.8132188309783)
--cycle;
\path [draw=black, fill=black]
(axis cs:36,30)
--(axis cs:34.0214837042184,31.249589239441)
--(axis cs:34.5361653081886,31.6706923699621)
--(axis cs:17.9419532025598,51.9525071657307)
--(axis cs:18.0580467974402,52.0474928342693)
--(axis cs:34.6522589030691,31.7656780385007)
--(axis cs:35.1669405070393,32.1867811690217)
--cycle;
\path [draw=black, fill=black]
(axis cs:18,52)
--(axis cs:19.5778479406176,50.2718808269426)
--(axis cs:18.9701632607852,50.001798747017)
--(axis cs:26.0685358661465,34.030460384954)
--(axis cs:25.9314641338535,33.969539615046)
--(axis cs:18.8330915284921,49.940877977109)
--(axis cs:18.2254068486597,49.6707958971835)
--cycle;
\path [draw=black, fill=black]
(axis cs:34,16)
--(axis cs:32.4221520593824,17.7281191730574)
--(axis cs:33.0298367392148,17.998201252983)
--(axis cs:25.9314641338535,33.969539615046)
--(axis cs:26.0685358661465,34.030460384954)
--(axis cs:33.1669084715079,18.059122022891)
--(axis cs:33.7745931513403,18.3292041028165)
--cycle;
\path [draw=black, fill=black]
(axis cs:13,17)
--(axis cs:15.2526854763157,17.6335677902138)
--(axis cs:15.2210546519544,16.969320478627)
--(axis cs:34.0035673862062,16.0749151103293)
--(axis cs:33.9964326137938,15.9250848896707)
--(axis cs:15.2139198795421,16.8194902579684)
--(axis cs:15.1822890551808,16.1552429463816)
--cycle;
\path [draw=black, fill=black]
(axis cs:55,15)
--(axis cs:52.7473145236843,14.3664322097862)
--(axis cs:52.7789453480456,15.030679521373)
--(axis cs:33.9964326137938,15.9250848896707)
--(axis cs:34.0035673862062,16.0749151103293)
--(axis cs:52.7860801204579,15.1805097420316)
--(axis cs:52.8177109448192,15.8447570536184)
--cycle;
\path [draw=black, fill=black]
(axis cs:17,45)
--(axis cs:19.2009765130327,44.2052029258493)
--(axis cs:18.7889130611276,43.6832558867696)
--(axis cs:36.0464733216434,30.058866207415)
--(axis cs:35.9535266783566,29.941133792585)
--(axis cs:18.6959664178408,43.5655234719396)
--(axis cs:18.2839029659357,43.0435764328598)
--cycle;
\path [draw=black, fill=black]
(axis cs:55,15)
--(axis cs:52.7990234869673,15.7947970741507)
--(axis cs:53.2110869388724,16.3167441132304)
--(axis cs:35.9535266783566,29.941133792585)
--(axis cs:36.0464733216434,30.058866207415)
--(axis cs:53.3040335821592,16.4344765280604)
--(axis cs:53.7160970340643,16.9564235671402)
--cycle;
\path [draw=black, fill=black]
(axis cs:74,0)
--(axis cs:71.7990234869673,0.794797074150691)
--(axis cs:72.2110869388724,1.31674411323042)
--(axis cs:54.9535266783566,14.941133792585)
--(axis cs:55.0464733216434,15.058866207415)
--(axis cs:72.3040335821592,1.43447652806043)
--(axis cs:72.7160970340643,1.95642356714016)
--cycle;
\path [draw=black, fill=black]
(axis cs:36,30)
--(axis cs:38.2009765130327,29.2052029258493)
--(axis cs:37.7889130611276,28.6832558867696)
--(axis cs:55.0464733216434,15.058866207415)
--(axis cs:54.9535266783566,14.941133792585)
--(axis cs:37.6959664178408,28.5655234719396)
--(axis cs:37.2839029659357,28.0435764328598)
--cycle;
\addplot [semithick, red, mark=*, mark size=1.5, mark options={solid}, only marks]
table {%
74 0
};
\addplot [semithick, red, mark=*, mark size=1.5, mark options={solid}, only marks]
table {%
0 74
};
\addplot [semithick, red, mark=*, mark size=1.5, mark options={solid}, only marks]
table {%
0 0
};
\addplot [semithick, green, mark=*, mark size=1.5, mark options={solid}, only marks]
table {%
13 17
};
\addplot [semithick, blue, mark=*, mark size=1.5, mark options={solid}, only marks]
table {%
17 45
};
\addplot [semithick, blue, mark=*, mark size=1.5, mark options={solid}, only marks]
table {%
18 52
};
\addplot [semithick, blue, mark=*, mark size=1.5, mark options={solid}, only marks]
table {%
26 34
};
\addplot [semithick, blue, mark=*, mark size=1.5, mark options={solid}, only marks]
table {%
34 16
};
\addplot [semithick, blue, mark=*, mark size=1.5, mark options={solid}, only marks]
table {%
36 30
};
\addplot [semithick, blue, mark=*, mark size=1.5, mark options={solid}, only marks]
table {%
55 15
};
\end{axis}

\end{tikzpicture}}}~\fbox{\adjustbox{width=0.31\textwidth}{% This file was created with tikzplotlib v0.10.1.
\begin{tikzpicture}

\definecolor{darkgray176}{RGB}{176,176,176}
\definecolor{green}{RGB}{0,128,0}

\begin{axis}[
hide x axis,
hide y axis,
tick align=outside,
tick pos=left,
title={\(\displaystyle \mathbf{\alpha}\)=(13,17,44)/74},
x grid style={darkgray176},
xmin=-15.0900432900433, xmax=95.4900432900433,
xtick style={color=black},
y grid style={darkgray176},
ymin=-1, ymax=81.4,
ytick style={color=black}
]
\path [draw=black, fill=black]
(axis cs:0,0)
--(axis cs:0.760714442404044,2.21298746881177)
--(axis cs:1.28896240133388,1.80903314727719)
--(axis cs:12.9404231625267,17.0455587580678)
--(axis cs:13.0595768374733,16.9544412419322)
--(axis cs:1.40811607628046,1.71791563114157)
--(axis cs:1.93636403521029,1.31396130960699)
--cycle;
\path [draw=black, fill=black]
(axis cs:26,34)
--(axis cs:25.239285557596,31.7870125311882)
--(axis cs:24.7110375986661,32.1909668527228)
--(axis cs:13.0595768374733,16.9544412419322)
--(axis cs:12.9404231625267,17.0455587580678)
--(axis cs:24.5918839237195,32.2820843688584)
--(axis cs:24.0636359647897,32.686038690393)
--cycle;
\path [draw=black, fill=black]
(axis cs:0,74)
--(axis cs:1.57784794061764,72.2718808269426)
--(axis cs:0.970163260785173,72.001798747017)
--(axis cs:8.06853586614652,56.030460384954)
--(axis cs:7.93146413385348,55.969539615046)
--(axis cs:0.833091528492134,71.940877977109)
--(axis cs:0.225406848659663,71.6707958971835)
--cycle;
\path [draw=black, fill=black]
(axis cs:16,38)
--(axis cs:14.4221520593824,39.7281191730574)
--(axis cs:15.0298367392148,39.998201252983)
--(axis cs:7.93146413385348,55.969539615046)
--(axis cs:8.06853586614652,56.030460384954)
--(axis cs:15.1669084715079,40.059122022891)
--(axis cs:15.7745931513403,40.3292041028165)
--cycle;
\path [draw=black, fill=black]
(axis cs:8,56)
--(axis cs:9.97851629578158,54.750410760559)
--(axis cs:9.46383469181137,54.3293076300379)
--(axis cs:17.0580467974402,45.0474928342693)
--(axis cs:16.9419532025598,44.9525071657307)
--(axis cs:9.34774109693087,54.2343219614993)
--(axis cs:8.83305949296066,53.8132188309783)
--cycle;
\path [draw=black, fill=black]
(axis cs:26,34)
--(axis cs:24.0214837042184,35.249589239441)
--(axis cs:24.5361653081886,35.6706923699621)
--(axis cs:16.9419532025598,44.9525071657307)
--(axis cs:17.0580467974402,45.0474928342693)
--(axis cs:24.6522589030691,35.7656780385007)
--(axis cs:25.1669405070393,36.1867811690217)
--cycle;
\path [draw=black, fill=black]
(axis cs:17,45)
--(axis cs:17.4186072144624,42.6976603204566)
--(axis cs:16.7602908011778,42.7917055223544)
--(axis cs:16.0742462120246,37.9893933982822)
--(axis cs:15.9257537879754,38.0106066017178)
--(axis cs:16.6117983771286,42.81291872579)
--(axis cs:15.9534819638439,42.9069639276878)
--cycle;
\path [draw=black, fill=black]
(axis cs:15,31)
--(axis cs:14.5813927855376,33.3023396795434)
--(axis cs:15.2397091988222,33.2082944776456)
--(axis cs:15.9257537879754,38.0106066017178)
--(axis cs:16.0742462120246,37.9893933982822)
--(axis cs:15.3882016228714,33.18708127421)
--(axis cs:16.0465180361561,33.0930360723122)
--cycle;
\path [draw=black, fill=black]
(axis cs:74,0)
--(axis cs:71.9069639276878,1.04651803615609)
--(axis cs:72.3771899371769,1.51674404564514)
--(axis cs:36.946966991411,36.946966991411)
--(axis cs:37.053033008589,37.053033008589)
--(axis cs:72.4832559543549,1.62281006282313)
--(axis cs:72.9534819638439,2.09303607231218)
--cycle;
\path [draw=black, fill=black]
(axis cs:0,74)
--(axis cs:2.09303607231218,72.9534819638439)
--(axis cs:1.62281006282313,72.4832559543549)
--(axis cs:37.053033008589,37.053033008589)
--(axis cs:36.946966991411,36.946966991411)
--(axis cs:1.51674404564514,72.3771899371769)
--(axis cs:1.04651803615609,71.9069639276878)
--cycle;
\path [draw=black, fill=black]
(axis cs:37,37)
--(axis cs:35.0529311971384,35.7019541314256)
--(axis cs:34.8779581222866,36.3435220725487)
--(axis cs:26.0197338054344,33.9276427134072)
--(axis cs:25.9802661945656,34.0723572865928)
--(axis cs:34.8384905114178,36.4882366457343)
--(axis cs:34.6635174365661,37.1298045868574)
--cycle;
\path [draw=black, fill=black]
(axis cs:15,31)
--(axis cs:16.9470688028616,32.2980458685744)
--(axis cs:17.1220418777134,31.6564779274513)
--(axis cs:25.9802661945656,34.0723572865928)
--(axis cs:26.0197338054344,33.9276427134072)
--(axis cs:17.1615094885822,31.5117633542656)
--(axis cs:17.3364825634339,30.8701954131426)
--cycle;
\path [draw=black, fill=black]
(axis cs:13,17)
--(axis cs:12.5813927855376,19.3023396795434)
--(axis cs:13.2397091988222,19.2082944776456)
--(axis cs:14.9257537879754,31.0106066017178)
--(axis cs:15.0742462120246,30.9893933982822)
--(axis cs:13.3882016228714,19.18708127421)
--(axis cs:14.0465180361561,19.0930360723122)
--cycle;
\path [draw=black, fill=black]
(axis cs:17,45)
--(axis cs:17.4186072144624,42.6976603204566)
--(axis cs:16.7602908011778,42.7917055223544)
--(axis cs:15.0742462120246,30.9893933982822)
--(axis cs:14.9257537879754,31.0106066017178)
--(axis cs:16.6117983771286,42.81291872579)
--(axis cs:15.9534819638439,42.9069639276878)
--cycle;
\addplot [semithick, red, mark=*, mark size=1.5, mark options={solid}, only marks]
table {%
74 0
};
\addplot [semithick, red, mark=*, mark size=1.5, mark options={solid}, only marks]
table {%
0 74
};
\addplot [semithick, red, mark=*, mark size=1.5, mark options={solid}, only marks]
table {%
0 0
};
\addplot [semithick, green, mark=*, mark size=1.5, mark options={solid}, only marks]
table {%
13 17
};
\addplot [semithick, blue, mark=*, mark size=1.5, mark options={solid}, only marks]
table {%
8 56
};
\addplot [semithick, blue, mark=*, mark size=1.5, mark options={solid}, only marks]
table {%
17 45
};
\addplot [semithick, blue, mark=*, mark size=1.5, mark options={solid}, only marks]
table {%
16 38
};
\addplot [semithick, blue, mark=*, mark size=1.5, mark options={solid}, only marks]
table {%
37 37
};
\addplot [semithick, blue, mark=*, mark size=1.5, mark options={solid}, only marks]
table {%
26 34
};
\addplot [semithick, blue, mark=*, mark size=1.5, mark options={solid}, only marks]
table {%
15 31
};
\end{axis}

\end{tikzpicture}}}
    \caption{Counterexample on the uniqueness of minimal $\ba$-mediated graphs.\label{fig:counter}}
    \end{figure}
    
\end{example}

For the sake of describing the problem, we denote by $I_A = \{0, 1, \ldots, d\}$  the index sets for the elements in $\A_{\ba} \cup \{\b_\ba\}$, where $0$ is assumed to be the index associated to $\b_\ba$. Observe that the size of the mediated graph is unknown and to be determined by our model. Thus, unlike what happens in other location problems, the problem not only consists of \textit{filling} adequately the coordinates of a given number of vectors, but this number is to be decided. Then, we overestimate the size of the mediated graph by $d+\Delta_{\ba}$ (where $\Delta_{\ba}$ is the upper bound described in Remark \ref{bounds}), and denote by $I_M = \{d+1, \ldots, d+\Delta_{\ba}\}$ the index set for the \textit{enlarged} mediated graph. Once, all these $\Delta_{\ba}$ coordinates are decided, and in order to account for a minimal size set of coordinates, we will group in a single element each of the equal-coordinates vectors, transforming the set of $\Delta_{\ba}$ coordinates into one that formally represents the unknown nodes of the $\ba$-mediated graph. That is, if $\x_{d+1}, \ldots, \x_{d+\Delta_{\ba}}$ are the coordinates of the enlarged mediated set, we compute the effective nodes, $X$, by removing repeated elements from the multiset.

Additionally, for the sake of simplification, we denote by $I=I_A \cup I_M = \{0, \ldots, d+\Delta_{\ba}\}$, the whole sets of coordinates involved in the problem.

We use the following set of variables in our model:

\begin{itemize}
\item To assure the correct construction of the mediated graph $G=(\A_{\ba}\cup X, A)$, we consider the following binary variables that account for the active arcs in the graph:
$$
y_{ij} = \begin{cases}
1 & \mbox{if $(\x_i,\x_j) \in A$,}\\
0 & \mbox{otherwise},
\end{cases}
$$
for all $i\in I_M\cup\{0\}, j \in I$, $i\neq j$.
\item In order to account for the number of effective mediated elements in the overestimated set, we use the standard variables in facility location to activate nodes:
$$
z_j = \begin{cases}
1 & \mbox{if node $\x_j$ is an active node in the mediated graph,}\\
0 & \mbox{otherwise}
\end{cases}
$$
for all $j \in I$.

Note that the amount $\sum_{j\in I_M} z_j$ provides the number of different nodes in the $\ba$-mediated set, and then, its cardinality. Since all the elements in $\A_{\ba} \cup \{\b_\alpha\}$ must be nodes of the graph, we fix $z_0 = \cdots = z_d=1$ in our model.
\item We consider continuous variables in $[0,\hat{s}]$ to model the coordinates of the nodes in the mediated graph:
$$
x_{jr} \in [0,\hat{s}]: \text{$k$th coordinate of the $j$-th node in the mediated graph.}
$$
for $j\in I$, $k=1, \ldots, d-1$. The first $d$ nodes have fixed coordinates to the elements in $\A_{\ba} \cup \{\b_\alpha\}$ that must be part of the mediated graph. Thus, we fix their values to $\x_0 = \b_\ba$, $\x_j = \hat{s} \e_j$ (for $j=1, \ldots, d-1$), and $\x_d = \mathbf{0}$.
\end{itemize}

With the above variables, the mathematical optimization model that we propose to find a minimal $\ba$-mediated graph, and then, the minimal extended representation of the cone, is the following:
\begin{align}
\min & \sum_{j\in I_M \cup \{0\}} z_j\\
\mbox{s.t. } & \dsum_{j\in I} y_{ij} = 2 z_i, \forall i \in I_M \cup \{0\},\label{ctr:1}\\
& y_{ij} \leq z_j, \forall i \in I_M \cup\{0\}, j \in I, \label{ctr:2}\\
& \x_i \geq \frac{1}{2}(\x_j + \x_k) - \hat{s} (2- y_{ij}-y_{ik}), \forall i \in I_M \cup \{0\}, j\neq k \in I,\label{ctr:3a}\\
& \x_i \leq \frac{1}{2}(\x_j + \x_k) + \hat{s} (2- y_{ij}-y_{ik}), \forall i \in I_M \cup \{0\}, j\neq k \in I,\label{ctr:3b}\\
 &  \|\x_i - \x_j\|_1 \geq \varepsilon (z_i+z_j-1), \forall i,j \in I (i\neq j),\label{ctr:4}\\ 
 & \x_0 = \b_{\ba},\\
& \x_i=\hat{s}\e_i, \forall i\in I_A\setminus \{0,d\},\\
& \x_d=\0,\\
& z_0 = \cdots = z_d =1,\\
& y_{ij} \in \{0,1\}, \forall i\in I_M\cup\{0\}, j \in I, i\neq j,\label{ctr:14}\\
& z_j \in \{0,1\}, \forall j \in I,\label{ctr:16}\\
& \x_i \in \R^{d-1}, \forall i \in I_M.\label{ctr:17}
\end{align}

The objective function in the above problem accounts for the number of (active) nodes in the mediated set of the $\ba$-mediated graph. Constraints \eqref{ctr:1} ensures that there are exactly two outgoing arcs from each active node in the mediated set, with endnodes in $\A_\ba$ or in the new set of mediated nodes. Constraints \eqref{ctr:2} ensure that only active nodes can be chosen to construct arcs in the mediated graph. Constraints \eqref{ctr:3a} and \eqref{ctr:3b} are the requirements for the mediated graph, i.e., the arcs are constructed as averages of the endnode coordinates. With Constraints \eqref{ctr:4} we assure that in case two mediated are active, then their coordinates must be different (since their $\ell_1$ distance between them must be greater than $\varepsilon$, and zero otherwise). The rest of the constraints are the variable fixing equations and the domains for each of the variables.

The above formulation is a mixed integer linear programming model, which can be hard to solve for general instances of the problem since the number of binary variables to decide is $\Delta_{\ba} + (\Delta_{\ba}+d-1)\Delta_{\ba}$. We incorporate different valid inequalities that strengthen the linear relaxation and break symmetries of the problem.

Specifically, in case $\x_j$ is not an active node, we fix its coordinates to the arbitrary vector $(\hat{s}, \stackrel{d-1)}{\ldots}, \hat{s})$, by adding the following constraints:
\begin{equation}\label{ctr:vi1}
\hat{s} (1-z_j) \leq x_{jk} \leq \hat{s} (1+z_j), \forall j \in I, k=1, \ldots, d-1.
\end{equation}
Note that in case $z_j=0$, these constraints enforce that $\x_j = (\hat{s}, \stackrel{d-1)}{\ldots}, \hat{s})$, otherwise, the constraint is redundant ($0 \leq x_{jk} \leq 2\hat{s}$, which is always satisfied). Observe that in case the node is not activated, the assignment of coordinates to the node is not relevant, and any value is possible. Thus, fixing it to a given value allows to reduce the exploration of the branch-and-bound tree.

The second set of valid inequalities that we consider are the following:
\begin{equation}\label{ctr:vi2}
z_j \leq z_{j-1}, \forall j \in I\backslash\{0\}
\end{equation}
These constraints force the non-activated nodes to be assigned to the last indices in $I$. Thus, given $\kappa = \sum_{j \in I} z_j$, the nodes in the mediated graph have coordinates $\x_0, \ldots, \x_{d+\kappa}$, being the remainder, $\x_{d+\kappa+1}=  \cdots = \x_{d+\Delta_{\ba}} =  (\hat{s}, \stackrel{d-1)}{\ldots}, \hat{s})$ by constraints \eqref{ctr:vi1}.

Observe that there is still a lot of symmetry in the problem since any permutation of the coordinates of the active mediated nodes is an alternative solution with the same objective value. We break symmetries in our model by enforcing the active nodes to be sorted in non-decreasing order with the following valid inequalities:
\begin{equation}\label{ctr:vi3}
x_{jk} \leq x_{(j-1)k}, \forall j \in I_M \backslash\{d\}.
\end{equation}
where $k \in \{1, \ldots, d-1\}$ is a chosen coordinate to sort the elements in the mediated set.

\begin{example}
    Taking $\ba = \frac{1}{6} (1,2,3)$, the minimal cardinality $\ba$-mediated set that we obtained solving the optimization problem above is the one drawn in Figure \ref{fig:mediated}.  Note that this graph induces a minimal representation of the cone  $\Q^{1+3}(\ba)=\{x^6\leq z_1z_2^2z_3^3\}$ with complexity $3$ (the number of extra mediated nodes in the graph). The constraints induced by the graph are:
    \begin{eqnarray}
        x^2 &\leq w_1 z_3\label{rep:1}\\
        w_1^2 &\leq w_2 z_2\label{rep:2}\\
        w_2^2 &\leq w_1 z_1\label{rep:3}
    \end{eqnarray}
    One can easily check that the obtained representation is valid by combining the above constraints:
    \begin{eqnarray*}
    \begin{split}
     x^6 w_2 &\stackrel{\eqref{rep:1}}{\leq} w_1^3 z_3^3 w_2 \stackrel{\eqref{rep:2}}{\leq} z_2 z_3^3 w_1  w_2^2\\
     & \stackrel{\eqref{rep:3}}{\leq} z_2 z_3^3 w_1^2 z_1 \stackrel{\eqref{rep:2}}{\leq} z_1 z_2^2 z_3^3 w_2 \Rightarrow x^6 \leq z_1 z_2^2 z_3^3
    \end{split}
    \end{eqnarray*}
\end{example}

\begin{remark}
    The representations that have been already applied in the literature, specially in the representation of $p$-order cones, and that allow us to derive an upper bound for the cardinality of the optimal mediated graphs (Remark \ref{bounds}) can be \textit{easily} constructed using binary decomposition (see \cite{blanco2014revisiting} for further details). The special mediated graphs obtained in such are characterized by verifying that each mediated node is linked in the mediated graph with at most one other mediated node. This condition can be easily enforced in our model with the linear constraints:
    \begin{equation*}
    \sum_{i\in I_M} y_{ij} \leq z_j, \forall j \in I_M.
    \end{equation*}
\end{remark}

\section{Experiments}\label{sec:5}

In this section, we report the results of some experiments that we run to test our approach and determine its limitation in size. First, we derive some tests to analyze the computational complexity of the $\ba$-MCMGP. Then, we apply the minimal extended representation of some cones to a practical problem in facility location.

\subsection{Computational experiments for the $\ba$-MCMGP}

We have run a series of experiments to analyze the performance of solving $\ba$-MCMGP on randomly generated instances. Note that the only input for $\ba$-MCMGP is the vector $\ba \in \Lambda_d$. Thus, we randomly generate sequences of $d$ integers in $[1, q]$, with $d$ ranging in $\{2, 3,4\}$ and $q$ ranging in $\{10,20,30,40\}$. For each combination of $d$ and $q$, we generate $5$ instances. Each of these instances results in a vector $\ba \in \Lambda_d$ normalizing the resulting vector by the sum of its coordinates. In total, $60$ instances were generated.

For each of these instances, we run the model described in Section \ref{model}. The formulation has been coded in Python 3.8 in a Huawei FusionServer Pro XH321 (\texttt{albaic\'in} at Universidad de Granada - \url{https://supercomputacion.ugr.es/arquitecturas/albaicin/}) with an Intel Xeon Gold 6258R CPU @ 2.70GHz with 28 cores. We used Gurobi 10.0.3 as an optimization solver. A time limit of 5 hours was fixed for all the instances.

In the results of our preliminary experiments, we observed that in most of the instances, the minimal cardinality of the mediated graph was reached in the lower bound. Thus, instead of fixing $\Delta_{\ba}$ as the upper bound, which results in an optimization problem with an overestimated number of variables, we proceed by fixing, initially $\Delta_{\ba}$ to the value of the lower bound, and in case the problem is infeasible, we increase by one unit this bound, until the problem is feasible. This strategy dramatically reduces the size of the optimization problem. 

The set of instances used in our experiments is available at the GitHub repository \url{https://github.com/vblancoOR/extended-representations}. 

In Table \ref{table:mediated} we summarize the results of our experiments. For each value of $d$ (dimension of $\ba$) and $q$ (maximum value of $s$), we report in the table the minimum, average, and maximum CPU time (in seconds) required by Gurobi to solve the problem (columns bellow  \texttt{CPU Time}); the number of nodes in the branch and bound approach for solving the feasible problem (\texttt{\#Nodes}); the deviations of the optimal value with respect to both the lower bound (\texttt{DevLB}) and the upper bound (\texttt{DevUB}). Finally, in column \texttt{Opt} we report the average optimal value for the problem, i.e., the cardinality of the minimal $\ba$-mediated set. For $d=4$ we highlight with an asterisk ($*$) in the Max CPU time column that for one of the instances summarized in that row reached the time limit in one of the runs, and in that case, the minimum cardinality of the $\ba$-mediated graph might be smaller but Gurobi was not able to find a feasible solution within the time limit and then, the problem for a larger value of $\Delta_\ba$ was successfully solved.

%\begin{table}[h]
%\centering
% \begin{tabular}{l l l l l l l}
% $d$ & $\hat{s}$ & \texttt{CPU Time} & \texttt{LB} & \texttt{Opt} & \texttt{UB} & \texttt{\#Nodes} \\\hline
% \hline
% \multirow{4}{*}{$2$} & $\leq 50$ & 0.02 & 4.9429 & 4.9429 & 6.9429 & 96 \\
%  & $50<\hat{s}\leq 100$ & 0.24 & 6.6818 & 6.6818 & 9.6818 & 1059 \\
%  & $100<\hat{s}\leq 500$ & 4.50 & 8.2857 & 8.2857 & 11.8286 & 75872 \\
%  & $>500$ & 22.85 & 9.8750 & 9.8750 & 15.5000 & 410810 \\\hline
% \multirow{3}{*}{$3$}  & $\leq 50$ & 1.08 & 5.2273 & 5.5455 & 7.9545 & 2558 \\
%  & $50<\hat{s}\leq 100$ & 318.65 & 6.6522 & 6.8696 & 9.4783 & 196407 \\
%  & $100<\hat{s}\leq 500$ & 1286.52 & 7.6923 & 7.8462 & 12.0769 & 1920546 \\
% \end{tabular}

\begin{table}
\centering
\begin{tabular}{|l|l|l|l|l|l|l|l|l|}\cline{3-5}
\multicolumn{2}{c|}{} & \multicolumn{3}{c|}{\texttt{CPU Time (secs)}} & \multicolumn{4}{c}{}\\\hline
$d$ & $q$ & \texttt{Min} & \texttt{Av} & \texttt{Max} & \texttt{\#Nodes} & \texttt{DevLB} & \texttt{DevUB} & \texttt{Opt} \\
\hline
\multirow{4}{*}{$2$}  & 10 & 0.02 & 0.04 & 0.04 & 0 & 0.00\% & 19.67\% & 3.8 \\
 & 20 & 0.06 & 0.16 & 0.35 & 1028 & 0.00\% & 27.88\% & 5.2 \\
 & 30 & 0.06 & 0.24 & 0.61 & 2659 & 0.00\% & 20.48\% & 5.4 \\
 & 40 & 0.15 & 0.3 & 0.41 & 2901 & 0.00\% & 19.52\% & 5.8 \\
\hline
\multirow{4}{*}{$3$}  & 10 & 0.13 & 0.43 & 0.96 & 1415 & 0.00\% & 26.93\% & 4.6 \\
 & 20 & 0.89 & 69.67 & 179.97 & 6101 & 12.38\% & 26.49\% & 6.2 \\
 & 30 & 1.28 & 2.11 & 3.67 & 19319 & 0.00\% & 35.67\% & 6 \\
 & 40 & 1.38 & 58.2 & 281.54 & 73464 & 2.86\% & 29.13\% & 6.4 \\\hline
 \multirow{4}{*}{$4$}  & 10 & 0.69 & 344.76 & 850.48 & 2884667 & 18.10\% & 20.08\% & 6.2 \\
 & 20 & 397.14 & 8551.05 & 18000$^*$ & 63662316 & 22.14\% & 22.94\% & 7.2 \\
 & 30 & 412.06 & 6343 & 18000$^*$ & 4063237 & 15.87\% & 30.00\% & 7.6 \\
 & 40 & 2.85 & 815.54 & 1628.23 & 2542017 & 7.14\% & 15.00\% & 6.5 \\
\hline
\end{tabular} \caption{Summary of our computational experiments for the $\ba$-MCMGP.\label{table:mediated}}
\end{table}

Several observations can be drawn from the obtained results. First, that the $\ba$-MCMGP is a computationally challenging problem. As the dimension of $\ba$, $d$, increases, solving the $\ba$-MCMGP becomes more computationally costly. Additionally, as already proved in \cite{wang2022weighted}, in the planar case ($d=2$), the optimal cardinality of the mediated set in the mediated graph coincides with the lower bound ($\max\{d-1, \lceil\log_2(\hat{s})\rceil\}$). This is no longer the case for $d\geq 3$, but still in $75\%$ of the instances for $d=3$ the lower bound coincides with the optimal value. For $d=4$, the situation becomes different, since in only $20\%$ of the instances the solver was able to check that the lower bound is the optimal cardinality of the mediated set. It can be the case that although the lower bound is reached, the solver was not able to prove it within the time limit.

Finally, one can also observe that the integer linear problem is challenging and requires branching, being the number of nodes in the branch-and-bound required to certify optimally really large for some of the instances. 

The most interesting observation, that validates the study in this paper, is that the deviations with respect to a myopic representation of the cones (as a tree-based mediated graph) are large (see \texttt{DevUB}, i.e., the savings in the number of mediated nodes, or equivalently, in the number of variables required to represent the cones as second order cones. 

Note that although solving the $\ba$-MCMGP might require a significant amount of CPU time, the problems requiring the representation of a given power cone would need to represent the cone several times (as in most continuous location problems) but the $\ba$-MCMGP is solved just once. It is advisable then to solve optimization problems restricted to these cones, since the minimal representation results in a reduction in the CPU time required to solve them, as can be observed in the following section.

In figures \ref{plots:d3} and \ref{plots:d4} we show some of the optimal mediated graphs obtained in our experiments for the cases $d=3$ (in $\R^2$) and $d=4$ (in $\R^3$), respectively. In the plots, we represent with red dots the elements in $\A_{\ba}$, with a green dot the point $\b_{\ba}$, and with blue dots the rest of the elements in the mediated set. The arrows represent the arcs of the mediated graph. An intriguing question, that will be the goal of our further research, is to determine the topological structure of the mediated graphs and to exploit it into the above mathematical optimization problem.

 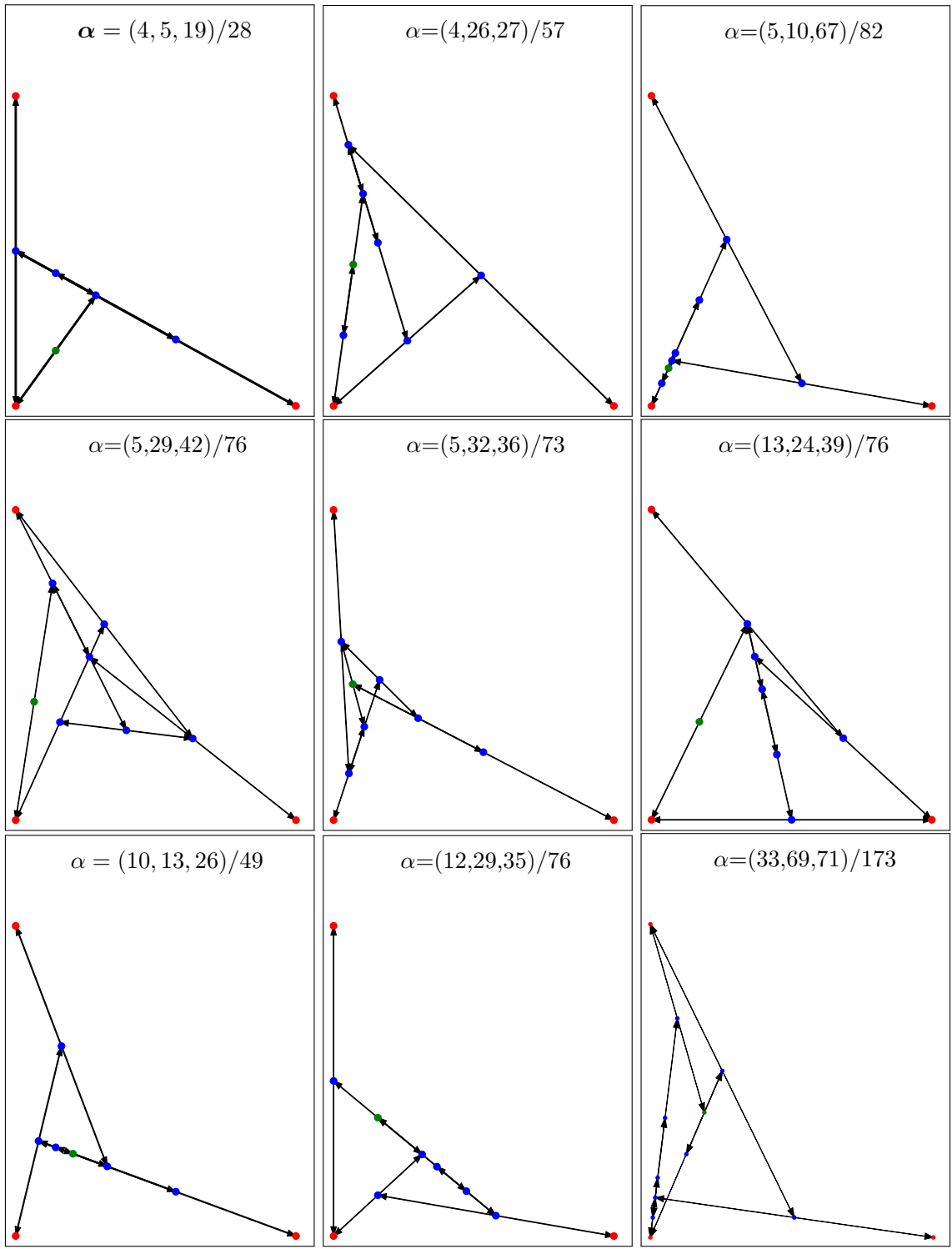
\begin{figure}[h]
     \fbox{\adjustbox{width=0.31\textwidth}{% This file was created with tikzplotlib v0.10.1.
\begin{tikzpicture}

\definecolor{darkgray176}{RGB}{176,176,176}
\definecolor{green}{RGB}{0,128,0}

\begin{axis}[
hide x axis,
hide y axis,
tick align=outside,
tick pos=left,
title={$\mathbf{\ba}=(4,5,19)/28$},
x grid style={darkgray176},
xmin=-6.43766233766234, xmax=36.2376623376623,
xtick style={color=black},
y grid style={darkgray176},
ymin=-1, ymax=30.8,
ytick style={color=black}
]
\path [draw=black, fill=black]
(axis cs:0,0)
--(axis cs:0.306100573301668,0.830844413247384)
--(axis cs:0.466178679237489,0.702781928498727)
--(axis cs:3.94143483929177,5.04685212856658)
--(axis cs:4.05856516070823,4.95314787143342)
--(axis cs:0.583309000653944,0.609077671365564)
--(axis cs:0.743387106589765,0.481015186616907)
--cycle;
\path [draw=black, fill=black]
(axis cs:8.00000000000001,9.99999999999999)
--(axis cs:7.69389942669834,9.16915558675261)
--(axis cs:7.53382132076252,9.29721807150126)
--(axis cs:4.05856516070823,4.95314787143342)
--(axis cs:3.94143483929177,5.04685212856658)
--(axis cs:7.41669099934606,9.39092232863443)
--(axis cs:7.25661289341024,9.51898481338308)
--cycle;
\path [draw=black, fill=black]
(axis cs:16,5.99999999999997)
--(axis cs:15.1234613528201,6.12521980673996)
--(axis cs:15.2151401398976,6.30857738089494)
--(axis cs:7.96645898033751,9.932917960675)
--(axis cs:8.03354101966251,10.067082039325)
--(axis cs:15.2822221792226,6.44274145954493)
--(axis cs:15.3739009663001,6.62609903369991)
--cycle;
\path [draw=black, fill=black]
(axis cs:0,14)
--(axis cs:0.876538647179918,13.87478019326)
--(axis cs:0.784859860102426,13.691422619105)
--(axis cs:8.03354101966251,10.067082039325)
--(axis cs:7.96645898033751,9.932917960675)
--(axis cs:0.717777820777432,13.557258540455)
--(axis cs:0.626099033699941,13.3739009663001)
--cycle;
\path [draw=black, fill=black]
(axis cs:28,0)
--(axis cs:27.1234613528201,0.125219806739987)
--(axis cs:27.2151401398976,0.30857738089497)
--(axis cs:15.9664589803375,5.93291796067498)
--(axis cs:16.0335410196625,6.06708203932497)
--(axis cs:27.2822221792226,0.442741459544957)
--(axis cs:27.3739009663001,0.62609903369994)
--cycle;
\path [draw=black, fill=black]
(axis cs:4.00000000000003,11.9999999999999)
--(axis cs:4.87653864717995,11.87478019326)
--(axis cs:4.78485986010246,11.691422619105)
--(axis cs:16.0335410196625,6.06708203932497)
--(axis cs:15.9664589803375,5.93291796067498)
--(axis cs:4.71777782077746,11.557258540455)
--(axis cs:4.62609903369997,11.3739009663)
--cycle;
\path [draw=black, fill=black]
(axis cs:0,28)
--(axis cs:0.28,27.16)
--(axis cs:0.075,27.16)
--(axis cs:0.075,14)
--(axis cs:-0.075,14)
--(axis cs:-0.075,27.16)
--(axis cs:-0.28,27.16)
--cycle;
\path [draw=black, fill=black]
(axis cs:0,0)
--(axis cs:-0.28,0.84)
--(axis cs:-0.075,0.84)
--(axis cs:-0.075,14)
--(axis cs:0.075,14)
--(axis cs:0.075,0.84)
--(axis cs:0.28,0.84)
--cycle;
\path [draw=black, fill=black]
(axis cs:8.00000000000001,9.99999999999999)
--(axis cs:7.12346135282009,10.12521980674)
--(axis cs:7.21514013989758,10.308577380895)
--(axis cs:3.96645898033753,11.9329179606749)
--(axis cs:4.03354101966253,12.0670820393249)
--(axis cs:7.28222217922257,10.4427414595449)
--(axis cs:7.37390096630006,10.6260990336999)
--cycle;
\path [draw=black, fill=black]
(axis cs:0,14)
--(axis cs:0.876538647179917,13.87478019326)
--(axis cs:0.784859860102424,13.691422619105)
--(axis cs:4.03354101966253,12.0670820393249)
--(axis cs:3.96645898033753,11.9329179606749)
--(axis cs:0.717777820777429,13.557258540455)
--(axis cs:0.626099033699936,13.3739009663001)
--cycle;
\addplot [semithick, red, mark=*, mark size=1.5, mark options={solid}, only marks]
table {%
28 0
};
\addplot [semithick, red, mark=*, mark size=1.5, mark options={solid}, only marks]
table {%
0 28
};
\addplot [semithick, red, mark=*, mark size=1.5, mark options={solid}, only marks]
table {%
0 0
};
\addplot [semithick, green, mark=*, mark size=1.5, mark options={solid}, only marks]
table {%
4 5
};
\addplot [semithick, blue, mark=*, mark size=1.5, mark options={solid}, only marks]
table {%
8.00000000000001 9.99999999999999
};
\addplot [semithick, blue, mark=*, mark size=1.5, mark options={solid}, only marks]
table {%
16 5.99999999999997
};
\addplot [semithick, blue, mark=*, mark size=1.5, mark options={solid}, only marks]
table {%
0 14
};
\addplot [semithick, blue, mark=*, mark size=1.5, mark options={solid}, only marks]
table {%
4.00000000000003 11.9999999999999
};
\end{axis}

\end{tikzpicture}}}~\fbox{\adjustbox{width=0.31\textwidth}{% This file was created with tikzplotlib v0.10.1.
\begin{tikzpicture}

\definecolor{darkgray176}{RGB}{176,176,176}
\definecolor{green}{RGB}{0,128,0}

\begin{axis}[
hide x axis,
hide y axis,
tick align=outside,
tick pos=left,
title={\(\displaystyle \mathbf{\alpha}\)=(4,26,27)/57},
x grid style={darkgray176},
xmin=-11.8924242424242, xmax=73.5924242424242,
xtick style={color=black},
y grid style={darkgray176},
ymin=-1, ymax=62.7,
ytick style={color=black}
]
\path [draw=black, fill=black]
(axis cs:2,13)
--(axis cs:1.69664591741339,14.7767881980073)
--(axis cs:2.18588990775044,14.7015198918016)
--(axis cs:3.9258721226762,26.011404288819)
--(axis cs:4.0741278773238,25.988595711181)
--(axis cs:2.33414566239804,14.6787113141635)
--(axis cs:2.82338965273509,14.6034430079578)
--cycle;
\path [draw=black, fill=black]
(axis cs:6,39)
--(axis cs:6.30335408258661,37.2232118019927)
--(axis cs:5.81411009224956,37.2984801081984)
--(axis cs:4.0741278773238,25.988595711181)
--(axis cs:3.9258721226762,26.011404288819)
--(axis cs:5.66585433760196,37.3212886858365)
--(axis cs:5.17661034726491,37.3965569920422)
--cycle;
\path [draw=black, fill=black]
(axis cs:0,0)
--(axis cs:-0.303354082586612,1.7767881980073)
--(axis cs:0.185889907750443,1.7015198918016)
--(axis cs:1.9258721226762,13.011404288819)
--(axis cs:2.0741278773238,12.988595711181)
--(axis cs:0.334145662398036,1.67871131416351)
--(axis cs:0.823389652735091,1.60344300795781)
--cycle;
\path [draw=black, fill=black]
(axis cs:4,26)
--(axis cs:4.30335408258661,24.2232118019927)
--(axis cs:3.81411009224956,24.2984801081984)
--(axis cs:2.0741278773238,12.988595711181)
--(axis cs:1.9258721226762,13.011404288819)
--(axis cs:3.66585433760196,24.3212886858365)
--(axis cs:3.17661034726491,24.3965569920422)
--cycle;
\path [draw=black, fill=black]
(axis cs:57,0)
--(axis cs:55.3432426769179,0.710038852749477)
--(axis cs:55.6721027771387,1.08000646549789)
--(axis cs:29.9501727120878,23.9439443010987)
--(axis cs:30.0498272879122,24.0560556989013)
--(axis cs:55.7717573529632,1.19211786330044)
--(axis cs:56.100617453184,1.56208547604885)
--cycle;
\path [draw=black, fill=black]
(axis cs:3,48)
--(axis cs:4.65675732308211,47.2899611472505)
--(axis cs:4.3278972228613,46.9199935345021)
--(axis cs:30.0498272879122,24.0560556989013)
--(axis cs:29.9501727120878,23.9439443010987)
--(axis cs:4.22824264703681,46.8078821366996)
--(axis cs:3.899382546816,46.4379145239512)
--cycle;
\path [draw=black, fill=black]
(axis cs:0,0)
--(axis cs:0.97920948704156,1.51332375270059)
--(axis cs:1.288433535581,1.12679369202629)
--(axis cs:14.9531478714334,12.0585651607082)
--(axis cs:15.0468521285666,11.9414348392918)
--(axis cs:1.38213779271416,1.00966337060984)
--(axis cs:1.6913618412536,0.623133309935538)
--cycle;
\path [draw=black, fill=black]
(axis cs:30,24)
--(axis cs:29.0207905129584,22.4866762472994)
--(axis cs:28.711566464419,22.8732063079737)
--(axis cs:15.0468521285666,11.9414348392918)
--(axis cs:14.9531478714334,12.0585651607082)
--(axis cs:28.6178622072858,22.9903366293902)
--(axis cs:28.3086381587464,23.3768666900645)
--cycle;
\path [draw=black, fill=black]
(axis cs:15,12)
--(axis cs:13.9185010402224,13.4419986130368)
--(axis cs:14.3880992727574,13.5985313572151)
--(axis cs:8.92884875264621,29.9762829175487)
--(axis cs:9.07115124735379,30.0237170824513)
--(axis cs:14.530401767465,13.6459655221176)
--(axis cs:15,13.802498266296)
--cycle;
\path [draw=black, fill=black]
(axis cs:3,48)
--(axis cs:4.08149895977759,46.5580013869632)
--(axis cs:3.61190072724258,46.4014686427849)
--(axis cs:9.07115124735379,30.0237170824513)
--(axis cs:8.92884875264621,29.9762829175487)
--(axis cs:3.469598232535,46.3540344778824)
--(axis cs:3,46.197501733704)
--cycle;
\path [draw=black, fill=black]
(axis cs:9,30)
--(axis cs:7.91850104022241,31.4419986130368)
--(axis cs:8.38809927275742,31.5985313572151)
--(axis cs:5.92884875264621,38.9762829175487)
--(axis cs:6.07115124735379,39.0237170824513)
--(axis cs:8.530401767465,31.6459655221176)
--(axis cs:9,31.802498266296)
--cycle;
\path [draw=black, fill=black]
(axis cs:3,48)
--(axis cs:4.08149895977759,46.5580013869632)
--(axis cs:3.61190072724258,46.4014686427849)
--(axis cs:6.07115124735379,39.0237170824513)
--(axis cs:5.92884875264621,38.9762829175487)
--(axis cs:3.469598232535,46.3540344778824)
--(axis cs:3,46.197501733704)
--cycle;
\path [draw=black, fill=black]
(axis cs:0,57)
--(axis cs:1.08149895977759,55.5580013869632)
--(axis cs:0.611900727242581,55.4014686427849)
--(axis cs:3.07115124735379,48.0237170824513)
--(axis cs:2.92884875264621,47.9762829175487)
--(axis cs:0.469598232535004,55.3540344778824)
--(axis cs:4.85634478344628e-17,55.197501733704)
--cycle;
\path [draw=black, fill=black]
(axis cs:6,39)
--(axis cs:4.91850104022241,40.4419986130368)
--(axis cs:5.38809927275742,40.5985313572151)
--(axis cs:2.92884875264621,47.9762829175487)
--(axis cs:3.07115124735379,48.0237170824513)
--(axis cs:5.530401767465,40.6459655221176)
--(axis cs:6,40.802498266296)
--cycle;
\addplot [semithick, red, mark=*, mark size=1.5, mark options={solid}, only marks]
table {%
57 0
};
\addplot [semithick, red, mark=*, mark size=1.5, mark options={solid}, only marks]
table {%
0 57
};
\addplot [semithick, red, mark=*, mark size=1.5, mark options={solid}, only marks]
table {%
0 0
};
\addplot [semithick, green, mark=*, mark size=1.5, mark options={solid}, only marks]
table {%
4 26
};
\addplot [semithick, blue, mark=*, mark size=1.5, mark options={solid}, only marks]
table {%
2 13
};
\addplot [semithick, blue, mark=*, mark size=1.5, mark options={solid}, only marks]
table {%
30 24
};
\addplot [semithick, blue, mark=*, mark size=1.5, mark options={solid}, only marks]
table {%
15 12
};
\addplot [semithick, blue, mark=*, mark size=1.5, mark options={solid}, only marks]
table {%
9 30
};
\addplot [semithick, blue, mark=*, mark size=1.5, mark options={solid}, only marks]
table {%
6 39
};
\addplot [semithick, blue, mark=*, mark size=1.5, mark options={solid}, only marks]
table {%
3 48
};
\end{axis}

\end{tikzpicture}}}~\fbox{\adjustbox{width=0.31\textwidth}{% This file was created with tikzplotlib v0.10.1.
\begin{tikzpicture}

\definecolor{darkgray176}{RGB}{176,176,176}
\definecolor{green}{RGB}{0,128,0}

\begin{axis}[
hide x axis,
hide y axis,
tick align=outside,
tick pos=left,
title={\(\displaystyle \mathbf{\alpha}\)=(5,10,67)/82},
x grid style={darkgray176},
xmin=-16.5948051948052, xmax=105.794805194805,
xtick style={color=black},
y grid style={darkgray176},
ymin=-1, ymax=90.2,
ytick style={color=black}
]
\path [draw=black, fill=black]
(axis cs:3,6)
--(axis cs:3.36671514830997,8.56700603816976)
--(axis cs:4.0330634056049,8.23383190952229)
--(axis cs:4.93291796067501,10.0335410196625)
--(axis cs:5.06708203932499,9.9664589803375)
--(axis cs:4.16722748425489,8.1667498701973)
--(axis cs:4.83357574154983,7.83357574154983)
--cycle;
\path [draw=black, fill=black]
(axis cs:7,14)
--(axis cs:6.63328485169003,11.4329939618302)
--(axis cs:5.9669365943951,11.7661680904777)
--(axis cs:5.06708203932499,9.9664589803375)
--(axis cs:4.93291796067501,10.0335410196625)
--(axis cs:5.83277251574511,11.8332501298027)
--(axis cs:5.16642425845017,12.1664242584502)
--cycle;
\path [draw=black, fill=black]
(axis cs:0,0)
--(axis cs:0.366715148309965,2.56700603816976)
--(axis cs:1.0330634056049,2.23383190952229)
--(axis cs:2.93291796067501,6.0335410196625)
--(axis cs:3.06708203932499,5.9664589803375)
--(axis cs:1.16722748425489,2.1667498701973)
--(axis cs:1.83357574154983,1.83357574154983)
--cycle;
\path [draw=black, fill=black]
(axis cs:6,12)
--(axis cs:5.63328485169003,9.43299396183024)
--(axis cs:4.9669365943951,9.76616809047771)
--(axis cs:3.06708203932499,5.9664589803375)
--(axis cs:2.93291796067501,6.0335410196625)
--(axis cs:4.83277251574511,9.8332501298027)
--(axis cs:4.16642425845017,10.1664242584502)
--cycle;
\path [draw=black, fill=black]
(axis cs:82,0)
--(axis cs:79.4422137950426,-0.42629770082623)
--(axis cs:79.55840591228,0.309585708343927)
--(axis cs:43.9883028069895,5.92591777760032)
--(axis cs:44.0116971930105,6.07408222239968)
--(axis cs:79.581800298301,0.457750153143287)
--(axis cs:79.6979924155384,1.19363356231344)
--cycle;
\path [draw=black, fill=black]
(axis cs:6,12)
--(axis cs:8.55778620495738,12.4262977008262)
--(axis cs:8.44159408771999,11.6904142916561)
--(axis cs:44.0116971930105,6.07408222239968)
--(axis cs:43.9883028069895,5.92591777760032)
--(axis cs:8.41819970169903,11.5422498468567)
--(axis cs:8.30200758446164,10.8063664376866)
--cycle;
\path [draw=black, fill=black]
(axis cs:0,82)
--(axis cs:1.94219877962712,80.2819010795606)
--(axis cs:1.29745619694368,79.9086290580071)
--(axis cs:22.0649069714111,44.0375777202906)
--(axis cs:21.9350930285889,43.9624222797094)
--(axis cs:1.16764225412151,79.8334736174258)
--(axis cs:0.52289967143807,79.4602015958722)
--cycle;
\path [draw=black, fill=black]
(axis cs:44,6)
--(axis cs:42.0578012203729,7.71809892043937)
--(axis cs:42.7025438030563,8.09137094199294)
--(axis cs:21.9350930285889,43.9624222797094)
--(axis cs:22.0649069714111,44.0375777202906)
--(axis cs:42.8323577458785,8.1665263825742)
--(axis cs:43.4771003285619,8.53979840412777)
--cycle;
\path [draw=black, fill=black]
(axis cs:5,10)
--(axis cs:5.36671514830997,12.5670060381698)
--(axis cs:6.0330634056049,12.2338319095223)
--(axis cs:5.93291796067501,12.0335410196625)
--(axis cs:6.06708203932499,11.9664589803375)
--(axis cs:6.16722748425489,12.1667498701973)
--(axis cs:6.83357574154983,11.8335757415498)
--cycle;
\path [draw=black, fill=black]
(axis cs:7,14)
--(axis cs:6.63328485169003,11.4329939618302)
--(axis cs:5.9669365943951,11.7661680904777)
--(axis cs:6.06708203932499,11.9664589803375)
--(axis cs:5.93291796067501,12.0335410196625)
--(axis cs:5.83277251574511,11.8332501298027)
--(axis cs:5.16642425845017,12.1664242584502)
--cycle;
\path [draw=black, fill=black]
(axis cs:22,44)
--(axis cs:21.63328485169,41.4329939618302)
--(axis cs:20.9669365943951,41.7661680904777)
--(axis cs:14.067082039325,27.9664589803375)
--(axis cs:13.932917960675,28.0335410196625)
--(axis cs:20.8327725157451,41.8332501298027)
--(axis cs:20.1664242584502,42.1664242584502)
--cycle;
\path [draw=black, fill=black]
(axis cs:6,12)
--(axis cs:6.36671514830997,14.5670060381698)
--(axis cs:7.0330634056049,14.2338319095223)
--(axis cs:13.932917960675,28.0335410196625)
--(axis cs:14.067082039325,27.9664589803375)
--(axis cs:7.16722748425489,14.1667498701973)
--(axis cs:7.83357574154983,13.8335757415498)
--cycle;
\path [draw=black, fill=black]
(axis cs:0,0)
--(axis cs:0.366715148309965,2.56700603816976)
--(axis cs:1.0330634056049,2.23383190952229)
--(axis cs:6.93291796067501,14.0335410196625)
--(axis cs:7.06708203932499,13.9664589803375)
--(axis cs:1.16722748425489,2.1667498701973)
--(axis cs:1.83357574154983,1.83357574154983)
--cycle;
\path [draw=black, fill=black]
(axis cs:14,28)
--(axis cs:13.63328485169,25.4329939618302)
--(axis cs:12.9669365943951,25.7661680904777)
--(axis cs:7.06708203932499,13.9664589803375)
--(axis cs:6.93291796067501,14.0335410196625)
--(axis cs:12.8327725157451,25.8332501298027)
--(axis cs:12.1664242584502,26.1664242584502)
--cycle;
\addplot [semithick, red, mark=*, mark size=1.5, mark options={solid}, only marks]
table {%
82 0
};
\addplot [semithick, red, mark=*, mark size=1.5, mark options={solid}, only marks]
table {%
0 82
};
\addplot [semithick, red, mark=*, mark size=1.5, mark options={solid}, only marks]
table {%
0 0
};
\addplot [semithick, green, mark=*, mark size=1.5, mark options={solid}, only marks]
table {%
5 10
};
\addplot [semithick, blue, mark=*, mark size=1.5, mark options={solid}, only marks]
table {%
3 6
};
\addplot [semithick, blue, mark=*, mark size=1.5, mark options={solid}, only marks]
table {%
44 6
};
\addplot [semithick, blue, mark=*, mark size=1.5, mark options={solid}, only marks]
table {%
22 44
};
\addplot [semithick, blue, mark=*, mark size=1.5, mark options={solid}, only marks]
table {%
6 12
};
\addplot [semithick, blue, mark=*, mark size=1.5, mark options={solid}, only marks]
table {%
14 28
};
\addplot [semithick, blue, mark=*, mark size=1.5, mark options={solid}, only marks]
table {%
7 14
};
\end{axis}

\end{tikzpicture}}}\\
     \fbox{\adjustbox{width=0.31\textwidth}{% This file was created with tikzplotlib v0.10.1.
\begin{tikzpicture}

\definecolor{darkgray176}{RGB}{176,176,176}
\definecolor{green}{RGB}{0,128,0}

\begin{axis}[
hide x axis,
hide y axis,
tick align=outside,
tick pos=left,
title={\(\displaystyle \mathbf{\alpha}\)=(5,29,42)/76},
x grid style={darkgray176},
xmin=-15.4662337662338, xmax=98.0662337662338,
xtick style={color=black},
y grid style={darkgray176},
ymin=-1, ymax=83.6,
ytick style={color=black}
]
\path [draw=black, fill=black]
(axis cs:0,0)
--(axis cs:-0.361561918328271,2.37597832044292)
--(axis cs:0.313478260956607,2.25959208263519)
--(axis cs:4.92609049131917,29.0127430187381)
--(axis cs:5.07390950868083,28.9872569812619)
--(axis cs:0.46129727831826,2.23410604515904)
--(axis cs:1.13633745760314,2.1177198073513)
--cycle;
\path [draw=black, fill=black]
(axis cs:10,58)
--(axis cs:10.3615619183283,55.6240216795571)
--(axis cs:9.68652173904341,55.7404079173648)
--(axis cs:5.07390950868083,28.9872569812619)
--(axis cs:4.92609049131917,29.0127430187381)
--(axis cs:9.53870272168176,55.765893954841)
--(axis cs:8.86366254239688,55.8822801926487)
--cycle;
\path [draw=black, fill=black]
(axis cs:0,76)
--(axis cs:1.77162541293965,74.3760100381387)
--(axis cs:1.1728276787964,74.0433446302813)
--(axis cs:10.0655617957091,58.0364232198384)
--(axis cs:9.9344382042909,57.9635767801616)
--(axis cs:1.04170408737817,73.9704981906045)
--(axis cs:0.442906353234916,73.6378327827471)
--cycle;
\path [draw=black, fill=black]
(axis cs:20,40.0000000000001)
--(axis cs:18.2283745870604,41.6239899618614)
--(axis cs:18.8271723212036,41.9566553697188)
--(axis cs:9.9344382042909,57.9635767801616)
--(axis cs:10.0655617957091,58.0364232198384)
--(axis cs:18.9582959126219,42.0295018093956)
--(axis cs:19.5570936467651,42.3621672172529)
--cycle;
\path [draw=black, fill=black]
(axis cs:48,20)
--(axis cs:45.6500171251273,19.4964322410987)
--(axis cs:45.7256627204886,20.1772425993502)
--(axis cs:29.9917176355444,21.9254587198996)
--(axis cs:30.0082823644557,22.0745412801006)
--(axis cs:45.7422274493998,20.3263251595512)
--(axis cs:45.8178730447611,21.0071355178026)
--cycle;
\path [draw=black, fill=black]
(axis cs:12.0000000000001,24.0000000000002)
--(axis cs:14.3499828748729,24.5035677589015)
--(axis cs:14.2743372795116,23.8227574006501)
--(axis cs:30.0082823644557,22.0745412801006)
--(axis cs:29.9917176355444,21.9254587198996)
--(axis cs:14.2577725506004,23.673674840449)
--(axis cs:14.1821269552391,22.9928644821976)
--cycle;
\path [draw=black, fill=black]
(axis cs:10,58)
--(axis cs:11.7716254129397,56.3760100381387)
--(axis cs:11.1728276787964,56.0433446302813)
--(axis cs:20.0655617957091,40.0364232198385)
--(axis cs:19.9344382042909,39.9635767801617)
--(axis cs:11.0417040873782,55.9704981906045)
--(axis cs:10.4429063532349,55.6378327827472)
--cycle;
\path [draw=black, fill=black]
(axis cs:30.0000000000001,22.0000000000001)
--(axis cs:28.2283745870604,23.6239899618615)
--(axis cs:28.8271723212037,23.9566553697188)
--(axis cs:19.9344382042909,39.9635767801617)
--(axis cs:20.0655617957091,40.0364232198385)
--(axis cs:28.9582959126219,24.0295018093956)
--(axis cs:29.5570936467651,24.362167217253)
--cycle;
\path [draw=black, fill=black]
(axis cs:76,0)
--(axis cs:73.7029466584221,0.706785643562423)
--(axis cs:74.1010948211197,1.26419307133904)
--(axis cs:47.9564071354711,19.9389699896595)
--(axis cs:48.0435928645289,20.0610300103405)
--(axis cs:74.1882805501776,1.38625309202005)
--(axis cs:74.5864287128752,1.94366051979666)
--cycle;
\path [draw=black, fill=black]
(axis cs:20,40.0000000000001)
--(axis cs:22.2970533415779,39.2932143564376)
--(axis cs:21.8989051788803,38.735806928661)
--(axis cs:48.0435928645289,20.0610300103405)
--(axis cs:47.9564071354711,19.9389699896595)
--(axis cs:21.8117194498225,38.61374690798)
--(axis cs:21.4135712871249,38.0563394802034)
--cycle;
\path [draw=black, fill=black]
(axis cs:0,76)
--(axis cs:2.06083934927725,74.7634963904337)
--(axis cs:1.54074857665703,74.3177042996163)
--(axis cs:24.0569442451776,48.0488093530097)
--(axis cs:23.9430557548228,47.9511906469913)
--(axis cs:1.42686008630223,74.2200855935979)
--(axis cs:0.906769313682011,73.7742935027806)
--cycle;
\path [draw=black, fill=black]
(axis cs:48,20)
--(axis cs:45.9391606507228,21.2365036095664)
--(axis cs:46.459251423343,21.6822957003837)
--(axis cs:23.9430557548228,47.9511906469913)
--(axis cs:24.0569442451776,48.0488093530097)
--(axis cs:46.5731399136978,21.7799144064021)
--(axis cs:47.0932306863181,22.2257064972195)
--cycle;
\path [draw=black, fill=black]
(axis cs:0,0)
--(axis cs:0.339882332579968,2.37917632805978)
--(axis cs:0.95256495841491,2.07283501514231)
--(axis cs:11.9329179606751,24.0335410196627)
--(axis cs:12.0670820393251,23.9664589803377)
--(axis cs:1.0867290370649,2.00575297581731)
--(axis cs:1.69941166289984,1.69941166289984)
--cycle;
\path [draw=black, fill=black]
(axis cs:24.0000000000002,48.0000000000005)
--(axis cs:23.6601176674203,45.6208236719407)
--(axis cs:23.0474350415853,45.9271649848582)
--(axis cs:12.0670820393251,23.9664589803377)
--(axis cs:11.9329179606751,24.0335410196627)
--(axis cs:22.9132709629353,45.9942470241831)
--(axis cs:22.3005883371004,46.3005883371006)
--cycle;
\addplot [semithick, red, mark=*, mark size=1.5, mark options={solid}, only marks]
table {%
76 0
};
\addplot [semithick, red, mark=*, mark size=1.5, mark options={solid}, only marks]
table {%
0 76
};
\addplot [semithick, red, mark=*, mark size=1.5, mark options={solid}, only marks]
table {%
0 0
};
\addplot [semithick, green, mark=*, mark size=1.5, mark options={solid}, only marks]
table {%
5 29
};
\addplot [semithick, blue, mark=*, mark size=1.5, mark options={solid}, only marks]
table {%
10 58
};
\addplot [semithick, blue, mark=*, mark size=1.5, mark options={solid}, only marks]
table {%
30.0000000000001 22.0000000000001
};
\addplot [semithick, blue, mark=*, mark size=1.5, mark options={solid}, only marks]
table {%
20 40.0000000000001
};
\addplot [semithick, blue, mark=*, mark size=1.5, mark options={solid}, only marks]
table {%
48 20
};
\addplot [semithick, blue, mark=*, mark size=1.5, mark options={solid}, only marks]
table {%
24.0000000000002 48.0000000000005
};
\addplot [semithick, blue, mark=*, mark size=1.5, mark options={solid}, only marks]
table {%
12.0000000000001 24.0000000000002
};
\end{axis}

\end{tikzpicture}}}~\fbox{\adjustbox{width=0.31\textwidth}{% This file was created with tikzplotlib v0.10.1.
\begin{tikzpicture}

\definecolor{darkgray176}{RGB}{176,176,176}
\definecolor{green}{RGB}{0,128,0}

\begin{axis}[
hide x axis,
hide y axis,
tick align=outside,
tick pos=left,
title={\(\displaystyle \mathbf{\alpha}\)=(5,32,36)/73},
x grid style={darkgray176},
xmin=-14.9019480519481, xmax=94.2019480519481,
xtick style={color=black},
y grid style={darkgray176},
ymin=-1, ymax=80.3,
ytick style={color=black}
]
\path [draw=black, fill=black]
(axis cs:8,22)
--(axis cs:6.67149494239826,23.8878756081709)
--(axis cs:7.29887115921812,24.0760884732168)
--(axis cs:4.92816302860841,31.9784489085825)
--(axis cs:5.07183697139159,32.0215510914175)
--(axis cs:7.44254510200129,24.1191906560518)
--(axis cs:8.06992131882114,24.3074035210978)
--cycle;
\path [draw=black, fill=black]
(axis cs:2,42)
--(axis cs:3.32850505760174,40.1121243918291)
--(axis cs:2.70112884078188,39.9239115267832)
--(axis cs:5.07183697139159,32.0215510914175)
--(axis cs:4.92816302860841,31.9784489085825)
--(axis cs:2.55745489799871,39.8808093439482)
--(axis cs:1.93007868117886,39.6925964789022)
--cycle;
\path [draw=black, fill=black]
(axis cs:12,33)
--(axis cs:11.9376318909867,30.692379966507)
--(axis cs:11.3220671986018,30.9162216728288)
--(axis cs:8.07048450676163,21.9743692702685)
--(axis cs:7.92951549323837,22.0256307297315)
--(axis cs:11.1810981850785,30.9674831322918)
--(axis cs:10.5655334926936,31.1913248386136)
--cycle;
\path [draw=black, fill=black]
(axis cs:4,11)
--(axis cs:4.06236810901332,13.307620033493)
--(axis cs:4.67793280139825,13.0837783271712)
--(axis cs:7.92951549323837,22.0256307297315)
--(axis cs:8.07048450676163,21.9743692702685)
--(axis cs:4.81890181492152,13.0325168677082)
--(axis cs:5.43446650730644,12.8086751613864)
--cycle;
\path [draw=black, fill=black]
(axis cs:73,0)
--(axis cs:70.707615205659,0.271977856955711)
--(axis cs:70.9865122644551,0.864634106897362)
--(axis cs:38.9680652222753,15.9321385973349)
--(axis cs:39.0319347777247,16.0678614026651)
--(axis cs:71.0503818199046,1.00035691222751)
--(axis cs:71.3292788787006,1.59301316216916)
--cycle;
\path [draw=black, fill=black]
(axis cs:5,32)
--(axis cs:7.29238479434099,31.7280221430443)
--(axis cs:7.01348773554492,31.1353658931026)
--(axis cs:39.0319347777247,16.0678614026651)
--(axis cs:38.9680652222753,15.9321385973349)
--(axis cs:6.94961818009544,30.9996430877725)
--(axis cs:6.67072112129937,30.4069868378308)
--cycle;
\path [draw=black, fill=black]
(axis cs:5,32)
--(axis cs:7.29238479434099,31.7280221430443)
--(axis cs:7.01348773554492,31.1353658931026)
--(axis cs:22.0319347777247,24.0678614026651)
--(axis cs:21.9680652222753,23.9321385973349)
--(axis cs:6.94961818009544,30.9996430877725)
--(axis cs:6.67072112129937,30.4069868378308)
--cycle;
\path [draw=black, fill=black]
(axis cs:39,16)
--(axis cs:36.707615205659,16.2719778569557)
--(axis cs:36.9865122644551,16.8646341068974)
--(axis cs:21.9680652222753,23.9321385973349)
--(axis cs:22.0319347777247,24.0678614026651)
--(axis cs:37.0503818199046,17.0003569122275)
--(axis cs:37.3292788787006,17.5930131621692)
--cycle;
\path [draw=black, fill=black]
(axis cs:22,24)
--(axis cs:19.8838415656343,24.9224280354927)
--(axis cs:20.322013464847,25.4092857012846)
--(axis cs:11.9498276451283,32.9442529390315)
--(axis cs:12.0501723548717,33.0557470609685)
--(axis cs:20.4223581745904,25.5207798232217)
--(axis cs:20.8605300738031,26.0076374890136)
--cycle;
\path [draw=black, fill=black]
(axis cs:2,42)
--(axis cs:4.11615843436568,41.0775719645073)
--(axis cs:3.67798653515298,40.5907142987154)
--(axis cs:12.0501723548717,33.0557470609685)
--(axis cs:11.9498276451283,32.9442529390315)
--(axis cs:3.57764182540961,40.4792201767783)
--(axis cs:3.13946992619691,39.9923625109864)
--cycle;
\path [draw=black, fill=black]
(axis cs:0,0)
--(axis cs:0.0623681090133236,2.30762003349297)
--(axis cs:0.67793280139825,2.08377832717118)
--(axis cs:3.92951549323837,11.0256307297315)
--(axis cs:4.07048450676163,10.9743692702685)
--(axis cs:0.818901814921515,2.03251686770817)
--(axis cs:1.43446650730644,1.80867516138638)
--cycle;
\path [draw=black, fill=black]
(axis cs:8,22)
--(axis cs:7.93763189098668,19.692379966507)
--(axis cs:7.32206719860175,19.9162216728288)
--(axis cs:4.07048450676163,10.9743692702685)
--(axis cs:3.92951549323837,11.0256307297315)
--(axis cs:7.18109818507848,19.9674831322918)
--(axis cs:6.56553349269356,20.1913248386136)
--cycle;
\path [draw=black, fill=black]
(axis cs:0,73)
--(axis cs:0.869482664362518,70.8615426362976)
--(axis cs:0.215841586988178,70.8193722442089)
--(axis cs:2.07484439817263,42.0048286708498)
--(axis cs:1.92515560182737,41.9951713291502)
--(axis cs:0.0661527906429091,70.8097149025092)
--(axis cs:-0.587488286731431,70.7675445104206)
--cycle;
\path [draw=black, fill=black]
(axis cs:4,11)
--(axis cs:3.13051733563748,13.1384573637024)
--(axis cs:3.78415841301182,13.1806277557911)
--(axis cs:1.92515560182737,41.9951713291502)
--(axis cs:2.07484439817263,42.0048286708498)
--(axis cs:3.93384720935709,13.1902850974908)
--(axis cs:4.58748828673143,13.2324554895794)
--cycle;
\addplot [semithick, red, mark=*, mark size=1.5, mark options={solid}, only marks]
table {%
73 0
};
\addplot [semithick, red, mark=*, mark size=1.5, mark options={solid}, only marks]
table {%
0 73
};
\addplot [semithick, red, mark=*, mark size=1.5, mark options={solid}, only marks]
table {%
0 0
};
\addplot [semithick, green, mark=*, mark size=1.5, mark options={solid}, only marks]
table {%
5 32
};
\addplot [semithick, blue, mark=*, mark size=1.5, mark options={solid}, only marks]
table {%
8 22
};
\addplot [semithick, blue, mark=*, mark size=1.5, mark options={solid}, only marks]
table {%
39 16
};
\addplot [semithick, blue, mark=*, mark size=1.5, mark options={solid}, only marks]
table {%
22 24
};
\addplot [semithick, blue, mark=*, mark size=1.5, mark options={solid}, only marks]
table {%
12 33
};
\addplot [semithick, blue, mark=*, mark size=1.5, mark options={solid}, only marks]
table {%
4 11
};
\addplot [semithick, blue, mark=*, mark size=1.5, mark options={solid}, only marks]
table {%
2 42
};
\end{axis}

\end{tikzpicture}}}~\fbox{\adjustbox{width=0.31\textwidth}{% This file was created with tikzplotlib v0.10.1.
\begin{tikzpicture}

\definecolor{darkgray176}{RGB}{176,176,176}
\definecolor{green}{RGB}{0,128,0}

\begin{axis}[
hide x axis,
hide y axis,
tick align=outside,
tick pos=left,
title={\(\displaystyle \mathbf{\alpha}\)=(13,24,39)/76},
x grid style={darkgray176},
xmin=-15.4662337662338, xmax=98.0662337662338,
xtick style={color=black},
y grid style={darkgray176},
ymin=-1, ymax=83.6,
ytick style={color=black}
]
\path [draw=black, fill=black]
(axis cs:0,0)
--(axis cs:0.417663684104968,2.36676087659482)
--(axis cs:1.01997868118266,2.0405069198444)
--(axis cs:12.9340531025097,24.0357212361406)
--(axis cs:13.0659468974903,23.9642787638594)
--(axis cs:1.15187247616317,1.96906444756329)
--(axis cs:1.75418747324086,1.64281049081287)
--cycle;
\path [draw=black, fill=black]
(axis cs:26,48)
--(axis cs:25.582336315895,45.6332391234052)
--(axis cs:24.9800213188173,45.9594930801556)
--(axis cs:13.0659468974903,23.9642787638594)
--(axis cs:12.9340531025097,24.0357212361406)
--(axis cs:24.8481275238368,46.0309355524367)
--(axis cs:24.2458125267591,46.3571895091871)
--cycle;
\path [draw=black, fill=black]
(axis cs:0,76)
--(axis cs:2.10835156019338,74.8463736746112)
--(axis cs:1.60638801842937,74.3802646715446)
--(axis cs:26.054959511872,48.0510338324525)
--(axis cs:25.945040488128,47.9489661675475)
--(axis cs:1.49646899468542,74.2781970066395)
--(axis cs:0.994505452921406,73.8120880035729)
--cycle;
\path [draw=black, fill=black]
(axis cs:52,20)
--(axis cs:49.8916484398066,21.1536263253888)
--(axis cs:50.3936119815706,21.6197353284554)
--(axis cs:25.945040488128,47.9489661675475)
--(axis cs:26.054959511872,48.0510338324525)
--(axis cs:50.5035310053146,21.7218029933605)
--(axis cs:51.0054945470786,22.1879119964271)
--cycle;
\path [draw=black, fill=black]
(axis cs:76,0)
--(axis cs:73.761915338773,0.875772258741009)
--(axis cs:74.2004416525431,1.40200383526521)
--(axis cs:51.9519861700252,19.9423834040302)
--(axis cs:52.0480138299748,20.0576165959698)
--(axis cs:74.2964693124928,1.51723702720482)
--(axis cs:74.734995626263,2.04346860372902)
--cycle;
\path [draw=black, fill=black]
(axis cs:28,40)
--(axis cs:30.238084661227,39.124227741259)
--(axis cs:29.7995583474569,38.5979961647348)
--(axis cs:52.0480138299748,20.0576165959698)
--(axis cs:51.9519861700252,19.9423834040302)
--(axis cs:29.7035306875072,38.4827629727952)
--(axis cs:29.265004373737,37.956531396271)
--cycle;
\path [draw=black, fill=black]
(axis cs:38,0)
--(axis cs:36.7097104748067,2.02759782530374)
--(axis cs:37.3742580874063,2.19373472845363)
--(axis cs:33.9272393124891,15.9818098281223)
--(axis cs:34.0727606875109,16.0181901718777)
--(axis cs:37.5197794624281,2.23011507220908)
--(axis cs:38.1843270750276,2.39625197535897)
--cycle;
\path [draw=black, fill=black]
(axis cs:30,32)
--(axis cs:31.2902895251933,29.9724021746963)
--(axis cs:30.6257419125937,29.8062652715464)
--(axis cs:34.0727606875109,16.0181901718777)
--(axis cs:33.9272393124891,15.9818098281223)
--(axis cs:30.4802205375719,29.7698849277909)
--(axis cs:29.8156729249724,29.603748024641)
--cycle;
\path [draw=black, fill=black]
(axis cs:26,48)
--(axis cs:27.2902895251933,45.9724021746963)
--(axis cs:26.6257419125937,45.8062652715464)
--(axis cs:28.0727606875109,40.0181901718777)
--(axis cs:27.9272393124891,39.9818098281223)
--(axis cs:26.4802205375719,45.7698849277909)
--(axis cs:25.8156729249724,45.603748024641)
--cycle;
\path [draw=black, fill=black]
(axis cs:30,32)
--(axis cs:28.7097104748067,34.0275978253037)
--(axis cs:29.3742580874063,34.1937347284536)
--(axis cs:27.9272393124891,39.9818098281223)
--(axis cs:28.0727606875109,40.0181901718777)
--(axis cs:29.5197794624281,34.2301150722091)
--(axis cs:30.1843270750276,34.396251975359)
--cycle;
\path [draw=black, fill=black]
(axis cs:76,0)
--(axis cs:73.72,-0.76)
--(axis cs:73.72,-0.075)
--(axis cs:38,-0.075)
--(axis cs:38,0.075)
--(axis cs:73.72,0.075)
--(axis cs:73.72,0.76)
--cycle;
\path [draw=black, fill=black]
(axis cs:0,0)
--(axis cs:2.28,0.76)
--(axis cs:2.28,0.075)
--(axis cs:38,0.075)
--(axis cs:38,-0.075)
--(axis cs:2.28,-0.075)
--(axis cs:2.28,-0.76)
--cycle;
\path [draw=black, fill=black]
(axis cs:26,48)
--(axis cs:27.2902895251933,45.9724021746963)
--(axis cs:26.6257419125937,45.8062652715464)
--(axis cs:30.0727606875109,32.0181901718777)
--(axis cs:29.9272393124891,31.9818098281223)
--(axis cs:26.4802205375719,45.7698849277909)
--(axis cs:25.8156729249724,45.603748024641)
--cycle;
\path [draw=black, fill=black]
(axis cs:34,16)
--(axis cs:32.7097104748067,18.0275978253037)
--(axis cs:33.3742580874063,18.1937347284536)
--(axis cs:29.9272393124891,31.9818098281223)
--(axis cs:30.0727606875109,32.0181901718777)
--(axis cs:33.5197794624281,18.2301150722091)
--(axis cs:34.1843270750276,18.396251975359)
--cycle;
\addplot [semithick, red, mark=*, mark size=1.5, mark options={solid}, only marks]
table {%
76 0
};
\addplot [semithick, red, mark=*, mark size=1.5, mark options={solid}, only marks]
table {%
0 76
};
\addplot [semithick, red, mark=*, mark size=1.5, mark options={solid}, only marks]
table {%
0 0
};
\addplot [semithick, green, mark=*, mark size=1.5, mark options={solid}, only marks]
table {%
13 24
};
\addplot [semithick, blue, mark=*, mark size=1.5, mark options={solid}, only marks]
table {%
26 48
};
\addplot [semithick, blue, mark=*, mark size=1.5, mark options={solid}, only marks]
table {%
52 20
};
\addplot [semithick, blue, mark=*, mark size=1.5, mark options={solid}, only marks]
table {%
34 16
};
\addplot [semithick, blue, mark=*, mark size=1.5, mark options={solid}, only marks]
table {%
28 40
};
\addplot [semithick, blue, mark=*, mark size=1.5, mark options={solid}, only marks]
table {%
38 0
};
\addplot [semithick, blue, mark=*, mark size=1.5, mark options={solid}, only marks]
table {%
30 32
};
\end{axis}

\end{tikzpicture}}}\\
     \fbox{\adjustbox{width=0.31\textwidth}{% This file was created with tikzplotlib v0.10.1.
%\documentclass[border=10pt]{standalone}

%\begin{document}
\begin{tikzpicture}

\definecolor{darkgray176}{RGB}{176,176,176}
\definecolor{green}{RGB}{0,128,0}

\begin{axis}[
hide x axis,
hide y axis,
tick align=outside,
tick pos=left,
title={$\displaystyle \mathbf{\alpha}=(10,13,26)/49$},
x grid style={darkgray176},
xmin=-10.3876623376623, xmax=63.2876623376623,
xtick style={color=black},
y grid style={darkgray176},
ymin=-1, ymax=53.9,
ytick style={color=black}
]
\path [draw=black, fill=black]
(axis cs:16,11)
--(axis cs:14.4504839465175,11)
--(axis cs:14.5817184694145,11.393703568691)
--(axis cs:9.97628291754874,12.9288487526462)
--(axis cs:10.0237170824513,13.0711512473538)
--(axis cs:14.629152634317,11.5360060633985)
--(axis cs:14.760387157214,11.9297096320895)
--cycle;
\path [draw=black, fill=black]
(axis cs:4,15)
--(axis cs:5.54951605348251,15)
--(axis cs:5.41828153058552,14.606296431309)
--(axis cs:10.0237170824513,13.0711512473538)
--(axis cs:9.97628291754874,12.9288487526462)
--(axis cs:5.37084736568299,14.4639939366015)
--(axis cs:5.239612842786,14.0702903679105)
--cycle;
\path [draw=black, fill=black]
(axis cs:28,7)
--(axis cs:26.4504839465175,7)
--(axis cs:26.5817184694145,7.39370356869096)
--(axis cs:15.9762829175487,10.9288487526462)
--(axis cs:16.0237170824513,11.0711512473538)
--(axis cs:26.629152634317,7.53600606339854)
--(axis cs:26.760387157214,7.9297096320895)
--cycle;
\path [draw=black, fill=black]
(axis cs:4,15)
--(axis cs:5.54951605348251,15)
--(axis cs:5.41828153058552,14.606296431309)
--(axis cs:16.0237170824513,11.0711512473538)
--(axis cs:15.9762829175487,10.9288487526462)
--(axis cs:5.37084736568299,14.4639939366015)
--(axis cs:5.239612842786,14.0702903679105)
--cycle;
\path [draw=black, fill=black]
(axis cs:49,0)
--(axis cs:47.4504839465175,-3.04318055372964e-17)
--(axis cs:47.5817184694145,0.393703568690963)
--(axis cs:27.9762829175487,6.92884875264621)
--(axis cs:28.0237170824513,7.07115124735379)
--(axis cs:47.629152634317,0.53600606339854)
--(axis cs:47.760387157214,0.929709632089504)
--cycle;
\path [draw=black, fill=black]
(axis cs:7,14)
--(axis cs:8.54951605348251,14)
--(axis cs:8.41828153058552,13.606296431309)
--(axis cs:28.0237170824513,7.07115124735379)
--(axis cs:27.9762829175487,6.92884875264621)
--(axis cs:8.37084736568299,13.4639939366015)
--(axis cs:8.23961284278601,13.0702903679105)
--cycle;
\path [draw=black, fill=black]
(axis cs:0,0)
--(axis cs:-0.0946910240809222,1.54662005998839)
--(axis cs:0.306296475853595,1.43969006000586)
--(axis cs:3.92753237952991,15.019324698792)
--(axis cs:4.07246762047009,14.980675301208)
--(axis cs:0.451231716793782,1.40104066242181)
--(axis cs:0.852219216728299,1.29411066243927)
--cycle;
\path [draw=black, fill=black]
(axis cs:8,30)
--(axis cs:8.09469102408092,28.4533799400116)
--(axis cs:7.6937035241464,28.5603099399941)
--(axis cs:4.07246762047009,14.980675301208)
--(axis cs:3.92753237952991,15.019324698792)
--(axis cs:7.54876828320622,28.5989593375782)
--(axis cs:7.1477807832717,28.7058893375607)
--cycle;
\path [draw=black, fill=black]
(axis cs:0,49)
--(axis cs:1.02204512390311,47.8353439285755)
--(axis cs:0.63956644322081,47.6743002735514)
--(axis cs:8.06912265313536,30.0291042750044)
--(axis cs:7.93087734686465,29.9708957249956)
--(axis cs:0.5013211369501,47.6160917235427)
--(axis cs:0.118842456267803,47.4550480685186)
--cycle;
\path [draw=black, fill=black]
(axis cs:16,11)
--(axis cs:14.9779548760969,12.1646560714245)
--(axis cs:15.3604335567792,12.3256997264486)
--(axis cs:7.93087734686464,29.9708957249956)
--(axis cs:8.06912265313536,30.0291042750044)
--(axis cs:15.4986788630499,12.3839082764573)
--(axis cs:15.8811575437322,12.5449519314814)
--cycle;
\path [draw=black, fill=black]
(axis cs:10,13)
--(axis cs:8.45048394651749,13)
--(axis cs:8.58171846941448,13.393703568691)
--(axis cs:6.97628291754874,13.9288487526462)
--(axis cs:7.02371708245126,14.0711512473538)
--(axis cs:8.62915263431701,13.5360060633985)
--(axis cs:8.760387157214,13.9297096320895)
--cycle;
\path [draw=black, fill=black]
(axis cs:4,15)
--(axis cs:5.54951605348251,15)
--(axis cs:5.41828153058552,14.606296431309)
--(axis cs:7.02371708245126,14.0711512473538)
--(axis cs:6.97628291754874,13.9288487526462)
--(axis cs:5.37084736568299,14.4639939366015)
--(axis cs:5.239612842786,14.0702903679105)
--cycle;
\addplot [semithick, red, mark=*, mark size=1.5, mark options={solid}, only marks]
table {%
49 0
};
\addplot [semithick, red, mark=*, mark size=1.5, mark options={solid}, only marks]
table {%
0 49
};
\addplot [semithick, red, mark=*, mark size=1.5, mark options={solid}, only marks]
table {%
0 0
};
\addplot [semithick, green, mark=*, mark size=1.5, mark options={solid}, only marks]
table {%
10 13
};
\addplot [semithick, blue, mark=*, mark size=1.5, mark options={solid}, only marks]
table {%
16 11
};
\addplot [semithick, blue, mark=*, mark size=1.5, mark options={solid}, only marks]
table {%
28 7
};
\addplot [semithick, blue, mark=*, mark size=1.5, mark options={solid}, only marks]
table {%
4 15
};
\addplot [semithick, blue, mark=*, mark size=1.5, mark options={solid}, only marks]
table {%
8 30
};
\addplot [semithick, blue, mark=*, mark size=1.5, mark options={solid}, only marks]
table {%
7 14
};
\end{axis}

\end{tikzpicture}
%\end{document}}}~\fbox{\adjustbox{width=0.31\textwidth}{% This file was created with tikzplotlib v0.10.1.
\begin{tikzpicture}

\definecolor{darkgray176}{RGB}{176,176,176}
\definecolor{green}{RGB}{0,128,0}

\begin{axis}[
hide x axis,
hide y axis,
tick align=outside,
tick pos=left,
title={\(\displaystyle \mathbf{\alpha}\)=(12,29,35)/76},
x grid style={darkgray176},
xmin=-15.4662337662338, xmax=98.0662337662338,
xtick style={color=black},
y grid style={darkgray176},
ymin=-1, ymax=83.6,
ytick style={color=black}
]
\path [draw=black, fill=black]
(axis cs:0,38)
--(axis cs:2.28,37.24)
--(axis cs:1.869,36.692)
--(axis cs:12.045,29.06)
--(axis cs:11.955,28.94)
--(axis cs:1.779,36.572)
--(axis cs:1.368,36.024)
--cycle;
\path [draw=black, fill=black]
(axis cs:24,20)
--(axis cs:21.72,20.76)
--(axis cs:22.131,21.308)
--(axis cs:11.955,28.94)
--(axis cs:12.045,29.06)
--(axis cs:22.221,21.428)
--(axis cs:22.632,21.976)
--cycle;
\path [draw=black, fill=black]
(axis cs:0,76)
--(axis cs:0.76,73.72)
--(axis cs:0.075,73.72)
--(axis cs:0.075,38)
--(axis cs:-0.075,38)
--(axis cs:-0.075,73.72)
--(axis cs:-0.76,73.72)
--cycle;
\path [draw=black, fill=black]
(axis cs:0,0)
--(axis cs:-0.76,2.28)
--(axis cs:-0.075,2.28)
--(axis cs:-0.075,38)
--(axis cs:0.075,38)
--(axis cs:0.075,2.28)
--(axis cs:0.76,2.28)
--cycle;
\path [draw=black, fill=black]
(axis cs:76,0)
--(axis cs:73.6300061340395,-0.398909858627007)
--(axis cs:73.735754297027,0.277878384492497)
--(axis cs:43.9884217339795,4.92589909746867)
--(axis cs:44.0115782660205,5.07410090253133)
--(axis cs:73.758910829068,0.426080189555163)
--(axis cs:73.8646589920554,1.10286843267467)
--cycle;
\path [draw=black, fill=black]
(axis cs:12,10)
--(axis cs:14.3699938659605,10.398909858627)
--(axis cs:14.264245702973,9.7221216155075)
--(axis cs:44.0115782660205,5.07410090253133)
--(axis cs:43.9884217339795,4.92589909746867)
--(axis cs:14.241089170932,9.57391981044484)
--(axis cs:14.1353410079446,8.89713156732533)
--cycle;
\path [draw=black, fill=black]
(axis cs:44,5)
--(axis cs:41.72,5.76000000000001)
--(axis cs:42.131,6.308)
--(axis cs:35.955,10.94)
--(axis cs:36.045,11.06)
--(axis cs:42.221,6.428)
--(axis cs:42.632,6.976)
--cycle;
\path [draw=black, fill=black]
(axis cs:28,17.0000000000001)
--(axis cs:30.28,16.2400000000001)
--(axis cs:29.869,15.6920000000001)
--(axis cs:36.045,11.06)
--(axis cs:35.955,10.94)
--(axis cs:29.779,15.5720000000001)
--(axis cs:29.368,15.0240000000001)
--cycle;
\path [draw=black, fill=black]
(axis cs:12,29)
--(axis cs:14.28,28.24)
--(axis cs:13.869,27.692)
--(axis cs:28.045,17.0600000000001)
--(axis cs:27.955,16.9400000000001)
--(axis cs:13.779,27.572)
--(axis cs:13.368,27.024)
--cycle;
\path [draw=black, fill=black]
(axis cs:44,5)
--(axis cs:41.72,5.76000000000001)
--(axis cs:42.131,6.308)
--(axis cs:27.955,16.9400000000001)
--(axis cs:28.045,17.0600000000001)
--(axis cs:42.221,6.428)
--(axis cs:42.632,6.976)
--cycle;
\path [draw=black, fill=black]
(axis cs:0,0)
--(axis cs:1.26500437373701,2.04346860372902)
--(axis cs:1.70353068750718,1.51723702720482)
--(axis cs:11.9519861700252,10.0576165959698)
--(axis cs:12.0480138299748,9.9423834040302)
--(axis cs:1.79955834745685,1.40200383526521)
--(axis cs:2.23808466122702,0.875772258741009)
--cycle;
\path [draw=black, fill=black]
(axis cs:24,20)
--(axis cs:22.734995626263,17.956531396271)
--(axis cs:22.2964693124928,18.4827629727952)
--(axis cs:12.0480138299748,9.9423834040302)
--(axis cs:11.9519861700252,10.0576165959698)
--(axis cs:22.2004416525431,18.5979961647348)
--(axis cs:21.761915338773,19.124227741259)
--cycle;
\path [draw=black, fill=black]
(axis cs:12,29)
--(axis cs:14.28,28.24)
--(axis cs:13.869,27.692)
--(axis cs:24.045,20.06)
--(axis cs:23.955,19.94)
--(axis cs:13.779,27.572)
--(axis cs:13.368,27.024)
--cycle;
\path [draw=black, fill=black]
(axis cs:36,11)
--(axis cs:33.72,11.76)
--(axis cs:34.131,12.308)
--(axis cs:23.955,19.94)
--(axis cs:24.045,20.06)
--(axis cs:34.221,12.428)
--(axis cs:34.632,12.976)
--cycle;
\addplot [semithick, red, mark=*, mark size=1.5, mark options={solid}, only marks]
table {%
76 0
};
\addplot [semithick, red, mark=*, mark size=1.5, mark options={solid}, only marks]
table {%
0 76
};
\addplot [semithick, red, mark=*, mark size=1.5, mark options={solid}, only marks]
table {%
0 0
};
\addplot [semithick, green, mark=*, mark size=1.5, mark options={solid}, only marks]
table {%
12 29
};
\addplot [semithick, blue, mark=*, mark size=1.5, mark options={solid}, only marks]
table {%
0 38
};
\addplot [semithick, blue, mark=*, mark size=1.5, mark options={solid}, only marks]
table {%
44 5
};
\addplot [semithick, blue, mark=*, mark size=1.5, mark options={solid}, only marks]
table {%
36 11
};
\addplot [semithick, blue, mark=*, mark size=1.5, mark options={solid}, only marks]
table {%
28 17.0000000000001
};
\addplot [semithick, blue, mark=*, mark size=1.5, mark options={solid}, only marks]
table {%
12 10
};
\addplot [semithick, blue, mark=*, mark size=1.5, mark options={solid}, only marks]
table {%
24 20
};
\end{axis}

\end{tikzpicture}}}~\fbox{\adjustbox{width=0.31\textwidth}{% This file was created with tikzplotlib v0.10.1.
\begin{tikzpicture}

\definecolor{darkgray176}{RGB}{176,176,176}
\definecolor{green}{RGB}{0,128,0}

\begin{axis}[
hide x axis,
hide y axis,
tick align=outside,
tick pos=left,
title={\(\displaystyle \mathbf{\alpha}\)=(33,69,71)/173},
x grid style={darkgray176},
xmin=-33.7114718614719, xmax=223.011471861472,
xtick style={color=black},
y grid style={darkgray176},
ymin=-1, ymax=190.3,
ytick style={color=black}
]
\path [draw=black, fill=black]
(axis cs:44,92)
--(axis cs:43.3214381724219,86.5715053793751)
--(axis cs:41.8284060356149,87.2855642274132)
--(axis cs:33.0676600666227,68.9676408377022)
--(axis cs:32.9323399333773,69.0323591622978)
--(axis cs:41.6930859023695,87.3502825520088)
--(axis cs:40.2000537655625,88.0643414000469)
--cycle;
\path [draw=black, fill=black]
(axis cs:22,46)
--(axis cs:22.6785618275781,51.4284946206249)
--(axis cs:24.1715939643851,50.7144357725868)
--(axis cs:32.9323399333773,69.0323591622978)
--(axis cs:33.0676600666227,68.9676408377022)
--(axis cs:24.3069140976305,50.6497174479912)
--(axis cs:25.7999462344375,49.9356585999531)
--cycle;
\path [draw=black, fill=black]
(axis cs:0,173)
--(axis cs:3.9975407307005,169.265208425308)
--(axis cs:2.54325344297477,168.475225207284)
--(axis cs:44.0659042577519,92.0357998437171)
--(axis cs:43.9340957422481,91.9642001562829)
--(axis cs:2.41144492747092,168.40362551985)
--(axis cs:0.95715763974519,167.613642301826)
--cycle;
\path [draw=black, fill=black]
(axis cs:88,11)
--(axis cs:84.0024592692995,14.734791574692)
--(axis cs:85.4567465570252,15.5247747927159)
--(axis cs:43.9340957422481,91.9642001562829)
--(axis cs:44.0659042577519,92.0357998437171)
--(axis cs:85.5885550725291,15.5963744801501)
--(axis cs:87.0428423602548,16.3863576981739)
--cycle;
\path [draw=black, fill=black]
(axis cs:173,0)
--(axis cs:167.630890295922,-1.0496003932785)
--(axis cs:167.843295533356,0.591712805081359)
--(axis cs:87.9903743850105,10.9256202478085)
--(axis cs:88.0096256149895,11.0743797521915)
--(axis cs:167.862546763335,0.740472309464428)
--(axis cs:168.07495200077,2.38178550782428)
--cycle;
\path [draw=black, fill=black]
(axis cs:3,22)
--(axis cs:8.36910970407847,23.0496003932785)
--(axis cs:8.15670446664367,21.4082871949186)
--(axis cs:88.0096256149895,11.0743797521915)
--(axis cs:87.9903743850105,10.9256202478085)
--(axis cs:8.13745323666468,21.2595276905356)
--(axis cs:7.92504799922988,19.6182144921757)
--cycle;
\path [draw=black, fill=black]
(axis cs:0,173)
--(axis cs:3.21867733001072,168.576300615404)
--(axis cs:1.64118724760107,168.075750877716)
--(axis cs:16.5714874659702,121.022683522856)
--(axis cs:16.4285125340298,120.977316477144)
--(axis cs:1.49821231566062,168.030383832004)
--(axis cs:-0.0792777667490323,167.529834094317)
--cycle;
\path [draw=black, fill=black]
(axis cs:33,69)
--(axis cs:29.7813226699893,73.423699384596)
--(axis cs:31.3588127523989,73.9242491222837)
--(axis cs:16.4285125340298,120.977316477144)
--(axis cs:16.5714874659702,121.022683522856)
--(axis cs:31.5017876843394,73.9696161679956)
--(axis cs:33.079277766749,74.4701659056832)
--cycle;
\path [draw=black, fill=black]
(axis cs:16.5,121)
--(axis cs:17.5128986581507,115.623845583661)
--(axis cs:15.8730747300374,115.847457937495)
--(axis cs:9.07431226260332,65.989866509645)
--(axis cs:8.92568773739668,66.010133490355)
--(axis cs:15.7244502048308,115.867724918205)
--(axis cs:14.0846262767175,116.091337272039)
--cycle;
\path [draw=black, fill=black]
(axis cs:1.5,11)
--(axis cs:0.48710134184926,16.3761544163385)
--(axis cs:2.12692526996257,16.1525420625049)
--(axis cs:8.92568773739668,66.010133490355)
--(axis cs:9.07431226260332,65.989866509645)
--(axis cs:2.27554979516922,16.1322750817949)
--(axis cs:3.91537372328253,15.9086627279613)
--cycle;
\path [draw=black, fill=black]
(axis cs:0,0)
--(axis cs:-1.01289865815074,5.37615441633854)
--(axis cs:0.626925269962574,5.15254206250491)
--(axis cs:1.42568773739668,11.010133490355)
--(axis cs:1.57431226260332,10.989866509645)
--(axis cs:0.775549795169219,5.13227508179491)
--(axis cs:2.41537372328253,4.90866272796128)
--cycle;
\path [draw=black, fill=black]
(axis cs:3,22)
--(axis cs:4.01289865815074,16.6238455836615)
--(axis cs:2.37307473003743,16.8474579374951)
--(axis cs:1.57431226260332,10.989866509645)
--(axis cs:1.42568773739668,11.010133490355)
--(axis cs:2.22445020483078,16.8677249182051)
--(axis cs:0.584626276717466,17.0913372720387)
--cycle;
\path [draw=black, fill=black]
(axis cs:0,0)
--(axis cs:-1.01289865815074,5.37615441633854)
--(axis cs:0.626925269962574,5.15254206250491)
--(axis cs:4.42568773739668,33.010133490355)
--(axis cs:4.57431226260332,32.989866509645)
--(axis cs:0.775549795169219,5.13227508179491)
--(axis cs:2.41537372328253,4.90866272796128)
--cycle;
\path [draw=black, fill=black]
(axis cs:9,66)
--(axis cs:10.0128986581507,60.6238455836615)
--(axis cs:8.37307473003743,60.8474579374951)
--(axis cs:4.57431226260332,32.989866509645)
--(axis cs:4.42568773739668,33.010133490355)
--(axis cs:8.22445020483078,60.8677249182051)
--(axis cs:6.58462627671747,61.0913372720387)
--cycle;
\path [draw=black, fill=black]
(axis cs:1.5,11)
--(axis cs:0.48710134184926,16.3761544163385)
--(axis cs:2.12692526996257,16.1525420625049)
--(axis cs:2.92568773739668,22.010133490355)
--(axis cs:3.07431226260332,21.989866509645)
--(axis cs:2.27554979516922,16.1322750817949)
--(axis cs:3.91537372328253,15.9086627279613)
--cycle;
\path [draw=black, fill=black]
(axis cs:4.5,33)
--(axis cs:5.51289865815074,27.6238455836615)
--(axis cs:3.87307473003743,27.8474579374951)
--(axis cs:3.07431226260332,21.989866509645)
--(axis cs:2.92568773739668,22.010133490355)
--(axis cs:3.72445020483078,27.8677249182051)
--(axis cs:2.08462627671747,28.0913372720387)
--cycle;
\path [draw=black, fill=black]
(axis cs:0,0)
--(axis cs:0.678561827578117,5.42849462062493)
--(axis cs:2.17159396438511,4.71443577258681)
--(axis cs:21.9323399333773,46.0323591622978)
--(axis cs:22.0676600666227,45.9676408377022)
--(axis cs:2.30691409763046,4.6497174479912)
--(axis cs:3.79994623443745,3.93565859995308)
--cycle;
\path [draw=black, fill=black]
(axis cs:44,92)
--(axis cs:43.3214381724219,86.5715053793751)
--(axis cs:41.8284060356149,87.2855642274132)
--(axis cs:22.0676600666227,45.9676408377022)
--(axis cs:21.9323399333773,46.0323591622978)
--(axis cs:41.6930859023695,87.3502825520088)
--(axis cs:40.2000537655625,88.0643414000469)
--cycle;
\addplot [semithick, red, mark=*, mark size=0.75, mark options={solid}, only marks]
table {%
173 0
};
\addplot [semithick, red, mark=*, mark size=0.75, mark options={solid}, only marks]
table {%
0 173
};
\addplot [semithick, red, mark=*, mark size=0.75, mark options={solid}, only marks]
table {%
0 0
};
\addplot [semithick, green, mark=*, mark size=0.75, mark options={solid}, only marks]
table {%
33 69
};
\addplot [semithick, blue, mark=*, mark size=0.75, mark options={solid}, only marks]
table {%
44 92
};
\addplot [semithick, blue, mark=*, mark size=0.75, mark options={solid}, only marks]
table {%
88 11
};
\addplot [semithick, blue, mark=*, mark size=0.75, mark options={solid}, only marks]
table {%
16.5 121
};
\addplot [semithick, blue, mark=*, mark size=0.75, mark options={solid}, only marks]
table {%
9 66
};
\addplot [semithick, blue, mark=*, mark size=0.75, mark options={solid}, only marks]
table {%
1.5 11
};
\addplot [semithick, blue, mark=*, mark size=0.75, mark options={solid}, only marks]
table {%
4.5 33
};
\addplot [semithick, blue, mark=*, mark size=0.75, mark options={solid}, only marks]
table {%
3 22
};
\addplot [semithick, blue, mark=*, mark size=0.75, mark options={solid}, only marks]
table {%
22 46
};
\end{axis}

\end{tikzpicture}}}%\\
     % \fbox{\adjustbox{width=0.31\textwidth}{\input{plots/15_19_36}}}~\fbox{\adjustbox{width=0.31\textwidth}{\input{plots/15_23_28}}}~\fbox{\adjustbox{width=0.31\textwidth}{\input{plots/35_63_96}}}
     \caption{Some examples of minimal cardinality $\ba$-mediated graphs for $d=3$.\label{plots:d3}}
 \end{figure}

 \begin{figure}[h]
 \fbox{\includegraphics[width=0.3\textwidth,trim={3cm 2cm 3cm 0.8cm},clip]{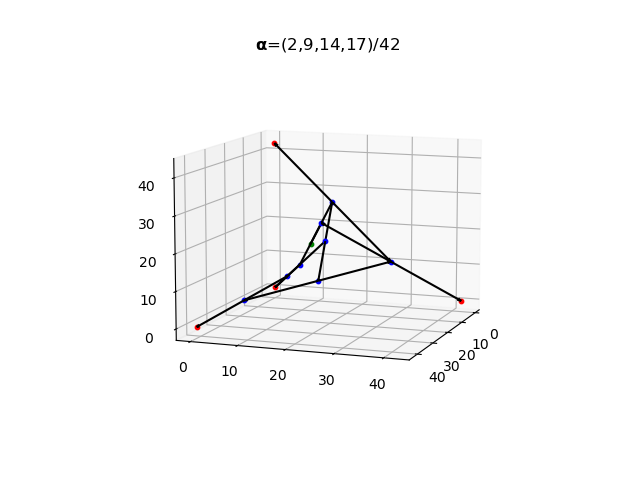}}~ \fbox{\includegraphics[width=0.3\textwidth,trim={3cm 2cm 3cm 0.8cm},clip]{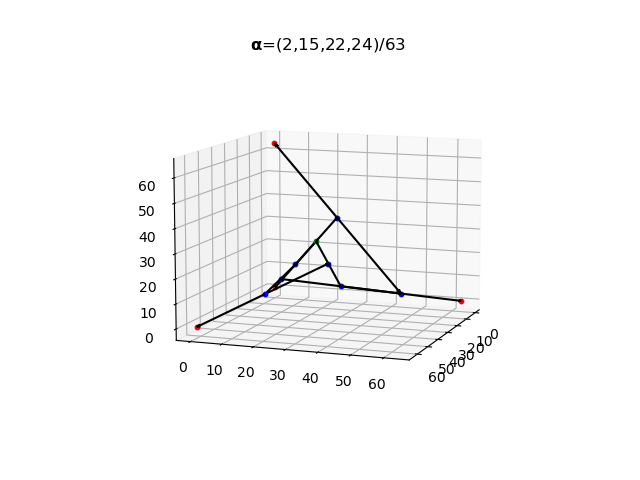}}~ \fbox{\includegraphics[width=0.3\textwidth,trim={3cm 2cm 3cm 0.8cm},clip]{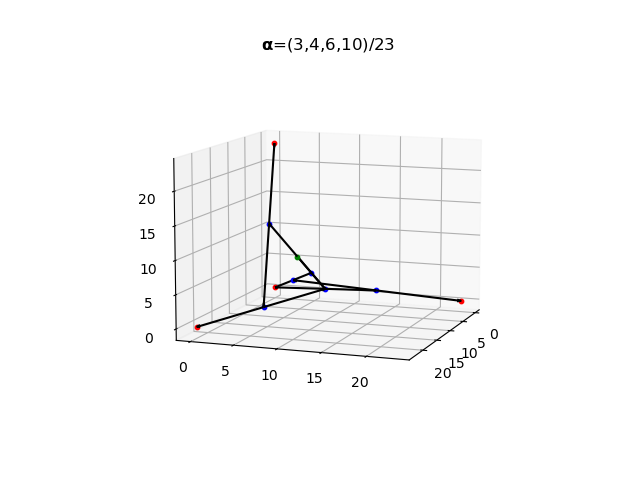}}\\
  \fbox{\includegraphics[width=0.3\textwidth,trim={3cm 2cm 3cm 0.8cm},clip]{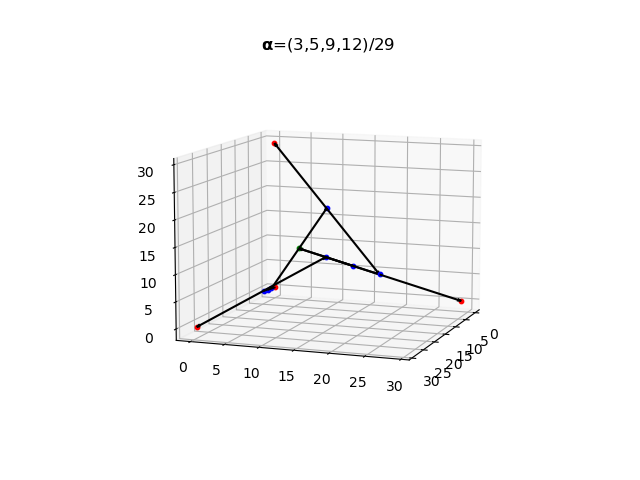}}~ \fbox{\includegraphics[width=0.3\textwidth,trim={3cm 2cm 3cm 0.8cm},clip]{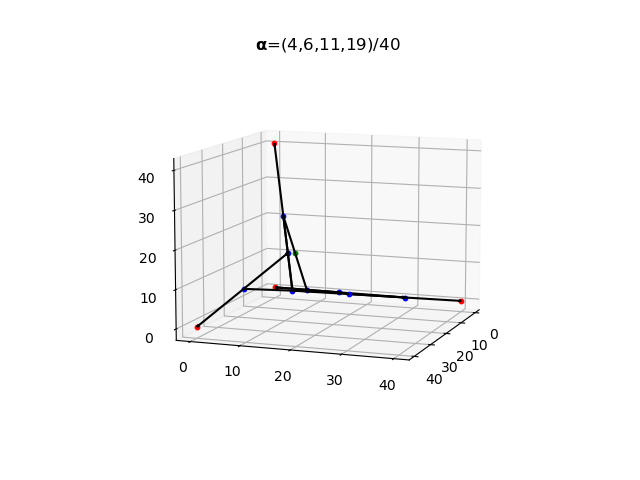}}~ \fbox{\includegraphics[width=0.3\textwidth,trim={3cm 2cm 3cm 0.8cm},clip]{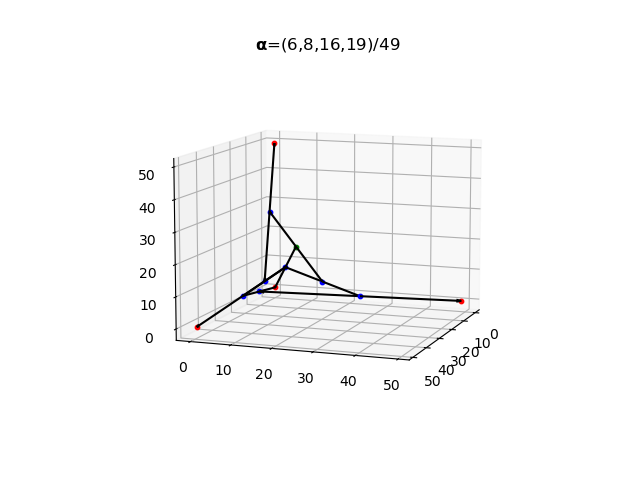}}
     \caption{Some examples of minimal cardinality $\ba$-mediated graphs for $d=4$.\label{plots:d4}}
 \end{figure}

\subsection{Computational experiments for the Gravitational Covering Location Problem}

The second set of experiments that we designed to test our approach is the use of a minimal extended representation of the $(p,\ba)$-power cone in a practical application. Specifically, we apply the representation to a continuous covering location problem whose coverage follows the Universal Law of Gravitation.

The goal of the problem that we propose is to optimally locate a given number of facilities in a $d$-dimensional space, $\R^d$, and decide their coverage radii based on different features to maximize the overall coverage of a given set of users.

This problem belongs to the family of Covering Location problems, that arise whenever a decision-maker aims to cover a given demand in case the facilities have a limited coverage area. In several practical situations, the services to be located are unable to satisfy the demand outside their coverage area but the demand covered by the services wants to be maximized. Church \cite{church1974maximal} introduces the Maximal Covering Location Problem (MCLP) as the problem of finding the positions of $p$ services on a given space, each of them endowed with a given coverage area, maximizing the covered demand of the users. The MCLP has attracted significant attention from both researchers and practitioners~\cite{murraywei}. Specifically, this type of problem has been applied in different fields, such as in the location of emergency services ~\cite{toregas1971location}, leak detection~\cite{blanco2024optimal}, mail advertising~\cite{dwyer1981branch}, archaeology~\cite{bell1985location}, among many others. The interested reader is referred to \cite{garcia2019covering} for further details and recent advances in covering location. 

% we propose a covering model that considers more parameters to allocate a demand point with a facility. In concrete, the coverage radius of a facility with respect to a demand point changes with the facility's location, the interest of the demand point to this facility, and the facility's capacity.

To formulate the problem we first describe the parameters and decision variables that we use in our mathematical optimization model.

We are given a set of demand points $\{\a_i\}_{i\in I}\subset \R^d$ indexed by the set $I$.  We assume that $|J|$ facilities are to be located in $\R^d$, indexed by the set $J$. 

We assume that each demand point is allowed to be serviced by a facility if it belongs to a coverage area induced by a $\ell_p$-norm ball centered at the facility's coordinates. The radius of each facility has to be determined based on different features, such as the capacity of the facility, the satisfaction level, or the distance to certain points of interest. In particular, a demand point located at $\mathbf{a} \in \R^d$ can be covered by a facility located at $\x \in \R^d$ only if $\|\x - \mathbf{a}\|_p \leq G m_1^{\alpha_1} \cdots m_l^{\alpha_l}$, where $m_1, \ldots, m_l$ are the different features affecting the coverage radius, and $G \in \R_+, \ba =(\alpha_1, \ldots, \alpha_l) \in \Lambda_l$ are the aggregation parameters for these values. Note that the shape of this coverage area extends the notion of attraction between two bodies (in this case the demand points and the facilities) stated in the Universal Law of Gravitation by Newton~\cite{newton}, where the attraction of two objects can be obtained as:
$$
\frac{Gm_1m_2}{\|\x_1-\x_2\|^2_2}
$$
where $G$ is the gravity constant, $m_1$ and $m_2$ are the masses of the two bodies, and $\x_1$ and $\x_2$ their positions in the space. 

Observe that the assumption of the dependence of the coverage radii with different features has a practical interest in real-world situations where the decision-makers may decide to give a different service to the users based on their characteristics (certain profile of users of interest, preferred communities, serve minorities instead of just those closer to the facility, etc). Thus, the consideration of attraction-function measures, as the one generalizing Newton's law, would provide insights into the convenience of locating the service in different places, as well as which users are more favorable to the coverage. Specifically, one facility may sacrifice to cover certain users which are closer at the price of covering others that are further but provide better coverage.

% The users express their preferences for the facilities in a $[0,1]$-valued ranking. We denote by $t_{ij} \in [0,1]$ the preference of user $i \in I$ by facility $j\in J$, such that, the larger the value of $t$, the better the preference for the facility. Since the coordinates of the facilities to be located are unknown and allowed to be positioned in the whole space, we specify the preferences of the $i$-th demand point using a preference function $\Phi_i: \R^d \rightarrow [0,1]$.

The values of the features for the coverage radius could depend only on the demand point (i.e., certain characteristics of the specific demand point) or only on the facility (as the capacity level), or both (as the interest of a demand point for a given facility or viceversa). The values of these features are unknown, and determining their values informs the decision-maker about the importance that should be given to each of them in the coverage of demand points. A budget $B$ is available to invest in the features.

In addition, each demand point $i \in I$ is endowed with a weight $\omega_i \in \R_+$, which models the population of the region identified with the demand point or the profit for covering it.

The goal of the Gravitational Maximal Covering  Location problem is to determine the optimal positions of the facilities indexed in $J$ that maximize the overall profit for covering the demand $I$ such that the cost for the values of the different features does not exceed the given budget.

To derive our mathematical programming formulation for the problem, we use the following sets of decision variables:

$$
y_{ij} = \begin{cases}
1 & \mbox{ if demand point $i$ is accounted as covered by  facility $j$,}\\
0 & \mbox{otherwise}
\end{cases} \quad \forall i \in I, j \in J.
$$
% $$
% \xi_{jk} = \begin{cases}
% 1 & \mbox{if facility $j$ is located in the region $k$,}\\
% 0 & \mbox{otherwise}
% \end{cases}
% $$

$$
\x_j \in \R^d: \text{ Coordinates of the placement of the facilities, } \quad \forall j \in J.
$$
$$
m_{ijk} \in \R_+: \text{ Value of feature $k$ affecting the coverage of demand point $i$ by facility $j$.}
$$
for $i \in I, j \in J, k=1, \ldots, l$.
%and the following linear maps.

% $$
% f_{ij}(\x)=\x_j-\a_i
% $$
% $$
% h_{ij}(\boldsymbol{\gamma},\boldsymbol{\xi},\z)=\left(\frac{\boldsymbol{\omega}^t\boldsymbol{\gamma}_j}{G}, z_{ij},\c^t\boldsymbol{\xi}_j \right)
% $$
% $$
% g_{ij}(\boldsymbol{\gamma})=M(1-\gamma_{ij})
% $$

The Gravitational Covering Location Problem can be formulated as the following Mathematical Optimization Model:

\begin{align}
\max & \dsum_{i\in I}\sum_{j\in J} \omega_i y_{ij} \label{m0:obj1}\\
\mbox{s.t. } & \|\x_j - \a_i\|_p \leq G_{ij} m_{ij1}^{\alpha_1} \cdots m_{ijl}^{\alpha_l} + M (1-y_{ij}), \forall i\in I, j \in J,\label{m0:c1}\\
& \dsum_{j\in J} y_{ij}\leq 1, \forall i \in I,\label{m0:c2}\\
& \dsum_{i\in I}\dsum_{j\in P} \dsum_{k=1}^l m_{ijk}\leq B, \label{m0:3}\\
& \x_j \in \R^d, \forall j\in J,\nonumber\\
& y_{ij} \in \{0,1\}, \forall i \in I, j\in J,\nonumber\\
& m_{ijk} \in \R_+, \forall i\in I, j\in J, k=1, \ldots, l.\nonumber
\end{align}

The objective function model is the overall weighted coverage of the solution. Constraints \eqref{m0:c1} enforce that in case the demand point $i$ is covered by facility $j$, $\a_i$ must belong to the coverage area determined by the different features. Otherwise, the value of the parameter $M> \max_{i, i' \in I} \|a_i-a_{i'}\|_p$ makes this constraint redundant. Constraints \eqref{m0:c2} state that each demand point can be accounted as covered by at most one facility. Constraint \eqref{m0:3} is the budget constraint. Finally, the domains of the variables are detailed.

Observe that constraints \eqref{m0:c1} can be written as conic constraints in the shape of those in Remark \ref{rem:12}, i.e., Constraints \eqref{m0:c1} are equivalent to:
$$
(\x,\y, \textbf{m})\in \Q_p(\ba)[f_{ij},g_{ij},h_{ij}], \forall i\in I, j\in J,
$$
where $f_{ij}(\x)=\x_j-\a_i$,  $h_{ij}(\textbf{m})= G_{ij} \cdot  \left(m_{ij1}, \ldots, m_{ijl}\right)$, and $g_{ij}(\y)=M(1-y_{ij})$

Note that one can incorporate more sophisticated expressions for the $m$-variables in the form of linear or conic constraints keeping a similar structured problem. 

We have run a series of experiments to analyze the performance of solving this problem with an optimal SOC representation of the generalized power cone with respect to the usual representation that provides the upper bound described in Remark \ref{bounds}. The chosen inputs are a random generation of four planar datasets of demand points in the plane with $|I| \in \{25, 50, 75, 100\}$. The parameters $\omega_i$ were randomly chosen as integers in $[0,10]$. We assume that the $m$-variables are bounded in $[0,1]$ where $l=3$; that the budget, $B$, is $\gamma(2|I|+|J|)$, where $\gamma=\frac{1}{4}$; and the constants $G_{ij}$ are all equal to $1$. We solved the problem for $|J| \in \{2, 3, 5\}$. 

We run the experiments for all the combinations of $\ell_p$-norms and $\ba$ values described in Table \ref{t:values}:

\setlength\extrarowheight{7pt}

\begin{table}[H]
\begin{center}
\begin{tabular}{c|c}
$p$ & $\ba$\\\hline
$\frac{43}{31}$ & $\left(\frac{2}{26},\frac{5}{26},\frac{19}{26}\right)$\\
\multirow{2}{*}{$2$} & $\left(\frac{13}{80},\frac{33}{80},\frac{34}{80}\right)$\\
& $\left(\frac{6}{60},\frac{19}{60},\frac{35}{60}\right)$\\
$\frac{17}{3}$ &$\left(\frac{35}{180},\frac{58}{180},\frac{87}{180}\right)$\\
\multicolumn{2}{c}{}\\
\end{tabular}
\caption{Norms and $\ba$ used in our computational experiments.\label{t:values}}
\end{center}
\end{table}

In total, 144 instances were generated. A time limit of 2 hours was fixed for all the instances.

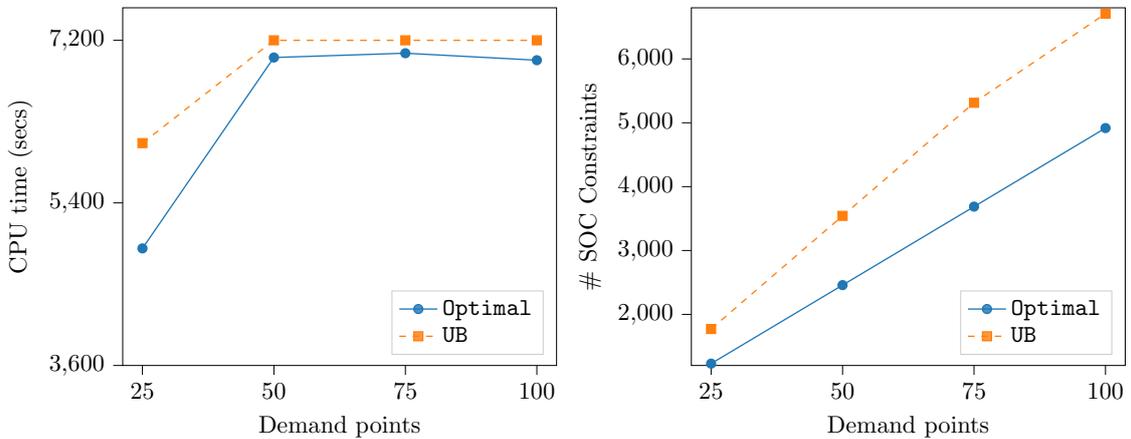
\begin{figure}[H]
\centering
\adjustbox{width=\textwidth}{% This file was created with tikzplotlib v0.10.1.
\begin{tikzpicture}

\definecolor{darkgray176}{RGB}{176,176,176}
\definecolor{darkorange25512714}{RGB}{255,127,14}
\definecolor{lightgray204}{RGB}{204,204,204}
\definecolor{steelblue31119180}{RGB}{31,119,180}

\begin{axis}[
legend cell align={left},
legend style={
  fill opacity=0.8,
  draw opacity=1,
  text opacity=1,
  at={(0.97,0.03)},
  anchor=south east,
  draw=lightgray204,
},
xtick={25,50,75,100},
ytick={1800, 3600, 5400, 7200},
tick align=outside,
tick pos=left,
x grid style={darkgray176},
xlabel={Demand points},
ylabel={CPU time (secs)},
xmin=21.25, xmax=103.75,
xtick style={color=black},
y grid style={darkgray176},
ymin=3600,
ytick style={color=black}
]

\addplot [semithick, steelblue31119180, mark=*, mark size=2, mark options={solid}]
table {%
25 4896.06
50 7009.53
75 7057.13
100 6979.62
};
\addlegendentry{\texttt{Optimal}}
\addplot [semithick, darkorange25512714, dashed, mark=square*, mark size=2, mark options={solid}]
table {%
25 6061.68
50 7200
75 7200
100 7200
};
\addlegendentry{\texttt{UB}}
\end{axis}

\end{tikzpicture}~% This file was created with tikzplotlib v0.10.1.
\begin{tikzpicture}

\definecolor{darkgray176}{RGB}{176,176,176}
\definecolor{darkorange25512714}{RGB}{255,127,14}
\definecolor{lightgray204}{RGB}{204,204,204}
\definecolor{steelblue31119180}{RGB}{31,119,180}

\begin{axis}[
legend cell align={left},
legend style={
  fill opacity=0.8,
  draw opacity=1,
  text opacity=1,
  % at={(0.03,0.97)},
  % anchor=north west,
    at={(0.97,0.03)},
  anchor=south east,
  draw=lightgray204
},
xtick={25,50,75,100},
ytick={1000, 2000, 3000, 4000, 5000, 6000, 7000},
ylabel={\# SOC Constraints},
tick align=outside,
tick pos=left,
x grid style={darkgray176},
xlabel={Demand points},
xmin=21.25, xmax=103.75,
xtick style={color=black},
y grid style={darkgray176},
ymin=1200, ymax=6800,
ytick style={color=black}
]
\addplot [semithick, steelblue31119180, mark=*, mark size=2, mark options={solid}]
table {%
% 25 1671.83
% 50 3342.67
% 75 5013.5
% 100 6684.33
25 1229
50 2458
75 3687
100 4917
};
\addlegendentry{\texttt{Optimal}}
\addplot [semithick, darkorange25512714, dashed, mark=square*, mark size=2, mark options={solid}]
table {%
% 25 2213.5
% 50 4426
% 75 6638.5
% 100 8476
25 1771
50 3542
75 5313
100 6708
};
\addlegendentry{\texttt{UB}}
\end{axis}

\end{tikzpicture}}
\caption{Average CPU times (left) and number of constraints (right) of the optimal SOC representations of the Gravitational Maximal Covering Location Problem with respect to \texttt{UB} SOC representantions.\label{fig:lines}}
\end{figure}

In figures \ref{fig:devs1} and \ref{fig:lines} we summarize the main results obtained in our experiments. 
 In Figure \ref{fig:lines} we represent, for each number of demand points in the experiments, in the left plot, the average CPU time (in seconds) required by Gurobi to solve the problem (or 7200 seconds if the time limit was reached without guarantying optimality), and, in the right plot, the number of SOC constraints in the formulation, both for the optimal representation (\texttt{Optimal}) and the one that provides the upper bound (\texttt{UB}). As can be observed, using the optimal cones is advisable since it reduces, significantly, the computational resources required to solve the problems. Specifically, the upper bound representation was able to solve optimally only 9 out of the 144 instances within the time limit (all of them form 25 demand points), whereas the optimal representation was able to solve 21. The main reason for such a gain is the number of SOC constraints in the models (on average 30\% less of these constraints in the optimal representation model). 

 Since most of the instances were not optimally solved within the time limit, in what follows we compare the quality of the best solution obtained after two hours. In the obtained results, the optimal model always reaches a coverage at least as good as the one obtained with the upper bound representation. In 94 out of the 144 instances ($65\%$ of the instances), the optimal representation model obtained a strictly better coverage than the upper bound model. 
 
 In Figure \ref{fig:devs1} we represent the percent deviation, in coverage, of the best solution obtained with the optimal representation with respect to the one obtained with the upper bound representation for those instances where the solutions are different. They represent $55\%$ of the instances for $|I|=25$, $72\%$ of the instances for $|I|=50$, $64\%$ of the instances for $|I|=75$, and $69\%$ of the instances for $|I|=100$. Observe that the average gain is impressive in the quality of the solution. Although the average deviation is $26\%$, in some instances the optimal representation obtained solutions with $82\%$ more coverage than those obtained with the upper bound representation.

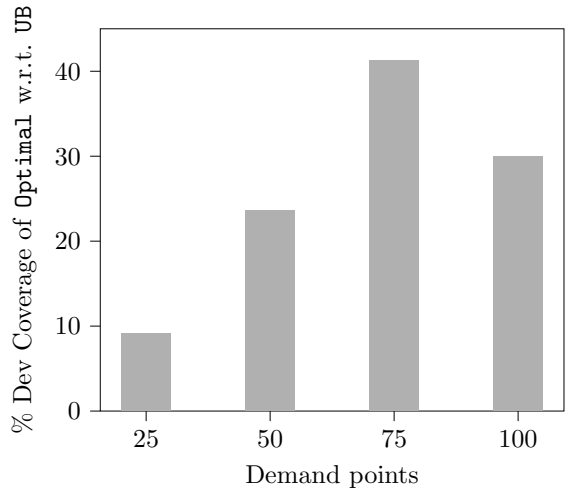
\begin{figure}[H]
\centering
\adjustbox{width=0.5\textwidth}{% This file was created with tikzplotlib v0.10.1.
\begin{tikzpicture}

\definecolor{darkgray176}{RGB}{176,176,176}
\definecolor{green01270}{RGB}{0,127,0}

\begin{axis}[
tick align=outside,
tick pos=left,
x grid style={darkgray176},
xlabel={Demand points},
xmin=-0.37, xmax=3.37,
xtick style={color=black},
xtick={0,1,2,3},
xticklabels={25,50,75,100},
y grid style={darkgray176},
ylabel={\% Dev Coverage of \texttt{Optimal}  w.r.t. \texttt{UB}},
ymin=0, ymax=45,
ytick style={color=black}
]
\draw[draw=none,fill=darkgray176] (axis cs:-0.2,0) rectangle (axis cs:0.2,9.13);
\draw[draw=none,fill=darkgray176] (axis cs:0.8,0) rectangle (axis cs:1.2,23.58);
\draw[draw=none,fill=darkgray176] (axis cs:1.8,0) rectangle (axis cs:2.2,41.22);
\draw[draw=none,fill=darkgray176] (axis cs:2.8,0) rectangle (axis cs:3.2,30.00);
\end{axis}

\end{tikzpicture}}
\caption{Average coverage deviations of the optimal SOC representations of Gravitational Maximal Covering Location Problem with respect to \texttt{UB} SOC representantions.\label{fig:devs1}}
\end{figure}

\section*{Conclusions}\label{sec:6}

In this paper, we analyze extended representations of general cones, the so-called $(p,\ba)$-power cones. These cones are highly practical when modeling constraints in optimization problems. We study different representations for general values for $\ba$ and $p$ as simpler cones and study further simplifications to second order cones in case $\ba$ and $p$ have rational entries. Specifically, we provide a novel mathematical optimization-based procedure to construct minimal representations, i.e. using the minimum number of extra variables and constraints in the representation. The idea behind our approach is the identification of each extended representation with a graph, the mediated graph, whose size coincides with the complexity of the representation. Thus, we introduce, for the first time, the Minimum Cardinality Mediated Graph Problem (MCMGP) and derive a suitable mixed integer linear programming formulation for it. We tested the approach both based on the computational difficulty of solving the $\ba$-MCMGP and the ability to use the minimal representations in a facility location problem compared to the representation provided by the upper bound (Remark \ref{bounds}).  The main conclusion of our computational experience is that the use of minimal representations of generalized is advisable in practice and that its study is worth it, not only for the mathematical challenges that it poses but also for its practical application. 

Further research on the topic includes a deeper study of the minimal mediated graphs that induce a minimal representation of a cone. The geometric properties of these intriguing graphs would result in a direct impact, in the form of valid inequalities for the $\ba$-MCMGP, in the solution times required to solve the problem. The analysis of particular values of $p$ and $\ba$ whose representation can be \textit{easily} derived would be also useful. We conjecture that, in general, the $\ba$-MCMGP is NP-hard. This fact would imply that finding a minimal representation of these cones is a challenging combinatorial problem. The proof of this conjecture will be the topic of our forthcoming research.

\section*{Acknowledgements}

This research has been partially supported by Spanish Ministerio de Ciencia e Innovación, AEI/FEDER grant number PID 2020 - 114594GBC21, RED2022-134149-T (Thematic Network on Location Science and Related Problems), Junta de Andalucía projects B-FQM-322-UGR20 and AT 21\_00032, and the IMAG-Maria de Maeztu grant CEX2020-001105-M /AEI /10.13039/501100011033. The authors would like to thank  Prof. Justo Puerto for interesting discussions on previous versions of this work and  the “Centro de Supercomputaci\'on” of the
Universidad de Granada for providing us support to use the \texttt{albaic\'in} server.

\end{document}